\documentclass{amsart}




\usepackage{amsmath}
\usepackage{amssymb}
\usepackage{amsthm}
\usepackage{amsfonts}
\usepackage{calligra}
\usepackage{appendix}
\usepackage{graphicx}
\usepackage{enumerate}
\usepackage{mathrsfs}
\usepackage[english]{babel}
\usepackage{mathtools}


\def\aint{-\nobreak \hskip-8.9pt \nobreak\int}
\def\Aint{-\nobreak \hskip-10.8pt \nobreak\int}
\def\R{\mathbb R}
\def\C{\mathbb C}
\def\N{\mathbb N}

\def\B{\mathbb{B}}

\def\S{{\mathcal S}}

\def\L2{L^2(\mathbb C, \mathbb C)}

\def\H{\mathscr H}

\def\Re{\mathbf{Re}}
\def\A{\mathscr{A}}
\def\SSS{\mathbb S}

\def\E{\mathcal{E}}
\def\tp{\textup}
\def\ext{\operatorname{ext}}
\newcommand{\norm}[1]{\left\lVert#1\right\rVert}
\newcommand{\abs}[1]{\left\lvert#1\right\rvert}

\newcommand{\defeq}{\mathrel{:\mkern-0.25mu=}}
\newcommand{\eqdef}{\mathrel{=\mkern-0.25mu:}}

\newtheorem{theorem}{Theorem}[section]
\newtheorem{lemma}[theorem]{Lemma}
\newtheorem{proposition}[theorem]{Proposition}

\newtheorem{question}[theorem]{Question}
\newtheorem{corollary}[theorem]{Corollary}
\newtheorem{claim}[theorem]{Claim}

\theoremstyle{definition}
\newtheorem{definition}[theorem]{Definition}
\newtheorem{assumption}[theorem]{Assumption}
\newtheorem{example}[theorem]{Example}

\newtheorem{strategy}[theorem]{Strategy}

\theoremstyle{remark}
\newtheorem{remark}[theorem]{Remark}

\numberwithin{equation}{section}

\begin{document}

\title[A note on the Jacobian problem of Coifman, Lions, Meyer and Semmes]{A note on the Jacobian problem of Coifman, Lions, Meyer and Semmes}


\author{Sauli Lindberg}
\address{Department of Mathematics and Statistics, University of Helsinki, P.O. Box 68, 00014
Helsingin yliopisto, Finland}
\curraddr{}
\email{sauli.lindberg@helsinki.fi}
\thanks{The author was supported by the ERC Advanced Grant 834728.}



\date{}

\begin{abstract}
Coifman, Lions, Meyer and Semmes asked in 1993 whether the Jacobian operator and other compensated compactness quantities map their natural domain of definition onto the real-variable Hardy space $\mathcal{H}^1(\R^n)$. We present an axiomatic, Banach space geometric approach to the problem in the case of quadratic operators. We also make progress on the main open case, the Jacobian equation in the plane.
\end{abstract}

\maketitle


\section{Introduction}
The real-variable Hardy space $\mathscr{H}^1(\R^n)$ often acts as a good substitute for $L^1(\R^n)$, mirroring many of the ways in which $\textup{BMO}(\R^n)$ substitutes $L^\infty(\R^n)$~\cite{Gra04,Ste93}. In order to define $\mathscr{H}^1(\R^n)$ we fix $\chi \in C_c^\infty(\R^n)$ with $\int_{\R^n} \chi(x) \, dx \neq 0$, denote $\chi_t(x) \defeq t^{-n} \chi(x/t)$ for every $x \in \R^n$ and $t > 0$ and set
\[\mathcal{H}^1(\R^n) \defeq \left\{ f \in \mathcal{S}'(\R^n) \colon \sup_{t > 0} \abs{f * \chi_t(\cdot)} \in L^1(\R^n) \right\}.\]
We endow $\mathscr{H}^1(\R^n)$ with the norm $\norm{f}_{\mathscr{H}^1} \defeq \norm{\sup_{t > 0} \abs{f * \chi_t(\cdot)}}_{L^1}$.

Connections between $\mathscr{H}^1$ integrability, commutators and weak sequential continuity were explored by Coifman \& al. in the highly influential work ~\cite{CLMS93}. Coifman \& al. showed that when $n \ge 2$, the Jacobian determinants of mappings in $\dot{W}^{1,n}(\R^n,\R^n)$ and several other compensated compactness quantities belong to $\mathscr{H}^1(\R^n)$. The result was motivated by M\"{u}ller's higher integrability result on Jacobians: if $u \in W^{1,n}(\Omega,\R^n)$ and $J_u \ge 0$ a.e. in an open set $\Omega \subset\R^n$, then $J u \log(2 + J u) \in L^1_{loc}(\Omega)$~\cite{Mul89}, in direct analogy to Stein's classical $L \tp{log} L$ result on $\mathscr{H}^1$ functions. For later developments of the $\mathcal{H}^1$ theory of compensated compactness quantities see e.g.~\cite{CG92,Gra92,GKL20,GR22,GRS22,IO02,LS20,LMZW97,Lin17,MS18,PW00,Str01,Wu96}.

Coifman \& al. proceeded to ask whether these nonlinear quantities are surjections onto $\mathcal{H}^1(\R^n)$~\cite[p. 258]{CLMS93}. The most famous open case is the following:
\begin{equation} \label{e:Jacobian problem}
\tp{Is } J \colon \dot{W}^{1,n}(\R^n,\R^n) \to \H^1(\R^n) \tp { surjective?}
\end{equation}
As a partial result, Coifman \& al. showed that $\mathscr{H}^1(\R^n)$ is the smallest Banach space that contains the range of the Jacobian. More precisely,
\begin{equation} \label{e:Jacobian decomposition}
\mathscr{H}^1(\R^n) = \left\{\sum_{j=1}^\infty J u_j \colon \sum_{j=1}^\infty \norm{D u_j}_{L^n}^n < \infty\right\}
\end{equation}
\cite[Theorem III.2]{CLMS93}. Further partial results were presented in~\cite{GKL20,GKL21B,Lin15}. When the domain of definition of $J$ is the \emph{inhomogeneous} Sobolev space $W^{1,n}(\R^n,\R^n)$, the author proved non-surjectivity in ~\cite{Lin17}.

In bounded domains, the Dirichlet problem $Ju = f$ in $\Omega$, $u = \tp{id}$ on $\partial \Omega$ has a classical theory starting from the seminal works of Moser and Dacorogna in~\cite{DM90,Mos65} and reviewed in~\cite{CDK12}. In a setting close to ours, when $u \in W^{1,n}_{\tp{id}}(\Omega,\R^n)$ has Jacobian $Ju \geq 0$ a.e., M\"uller's higher integrability result implies that $Ju \in L \log L(\Omega)$. As an analogue of \eqref{e:Jacobian problem}, Hogan \& al. asked whether every non-negative $f \in L \log L(\Omega)$ with mean 1 has a solution $u \in W^{1,n}_{\tp{id}}(\Omega,\R^n)$~\cite[p. 206]{HLMZ00}. Counter-examples were recently given by Guerra \& al. in~\cite{GKL21A}. For all $L^{q/n}$ data, $1 < q < n$, "pointwise" (as opposed to distributional) solutions in $W^{1,q}(\Omega,\R^n)$ with arbitrary boundary values in $W^{1-1/q,q}(\Omega,\R^n)$ were constructed in~\cite{KRW15}. At any rate, compared to the Dirichlet problem, different ideas are needed in the case of $\R^n$ due to the unboundedness of the domain and the absence of boundary conditions. 

\vspace{0.2cm}
In \cite{Iwa97}, Iwaniec conjectured that for every $n \ge 2$ and $p \in [1,\infty)$ the Jacobian operator $J \colon \dot{W}^{1,np}(\R^n,\R^n) \to \mathcal{H}^p(\R^n)$ not only is surjective but also has a continuous right inverse $G \colon \mathscr{H}^p(\R^n) \to \dot{W}^{1,np}(\R^n,\R^n)$; recall that $\mathscr{H}^p(\R^n) = L^p(\R^n)$ whenever $1 < p < \infty$. Hyt\"onen proved in~\cite{Hyt21} the natural analogue of \eqref{e:Jacobian decomposition}, so that $L^p(\R^n)$ is, again, the minimal Banach space containing $J(\dot{W}^{1,np}(\R^n,\R^n))$. We discuss Hyt\"onen's contribution in \textsection \ref{s:The Jacobian equation with Lp data}.

We next briefly summarise some of the ideas presented in \cite{Iwa97}. Whenever $f \in \mathcal{H}^p(\R^n)$ and the equation $J v = f$ has a solution, it has a \emph{minimum norm solution} $u \in \dot{W}^{1,np}(\R^n,\R^n)$, that is, $J u = f$ and $\int_{\R^n} \abs{D u}^{np} = \min_{J v = f} \int_{\R^n} \abs{D v}^{np}$. Furthermore, the range of $J$ is dense in $\mathcal{H}^p(\R^n)$. Iwaniec suggested a possible way of finding a continuous right inverse $G$:

\begin{strategy} \label{s:Iwaniec}
When $n \geq 2$ and $1 \leq p < \infty$, the following claims would yield a continuous right inverse of $J \colon \dot{W}^{1,np}(\R^n,\R^n) \to \mathscr{H}^p(\R^n)$:
\begin{enumerate}
\item Every energy-minimal solution $u$ satisfies $\norm{D u}_{L^{np}}^{n} \lesssim \norm{J u}_{\mathscr{H}^p}$.
\label{i:Iwaniec (1)}

\item For every $f \in \mathscr{H}^p(\R^n)$ there is a unique energy-minimal solution $u_f$, modulo rotations. \label{i:Iwaniec (2)}

\item There exist rotations $R_f \in SO(n)$ such that $f \mapsto R_f u_f$ is a continuous right inverse of $J$. \label{i:Iwaniec (3)}
\end{enumerate}
\end{strategy}

Claim \eqref{i:Iwaniec (1)} would easily imply the  surjectivity of $J \colon \dot{W}^{1,np}(\R^n,\R^n) \to \mathscr{H}^p(\R^n)$ (see~\cite[p. 36]{Lin15}). In~\cite{GKL20}, \eqref{i:Iwaniec (1)} was shown to be, in fact, \emph{equivalent} to the surjectivity of $J \colon \dot{W}^{1,np}(\R^n,\R^n) \to \mathscr{H}^p(\R^n)$; this amounts to an open mapping theorem for the Jacobian. In~\cite{GKL21B}, in turn, \eqref{i:Iwaniec (2)} was shown to be false whenever $n = 2$ and $1 \leq p < \infty$. Nevertheless, in~\cite{GKL21B}, Guerra \& al. found an explicit class of data whose energy-minimal solution is, indeed, unique up to rotations. Another large class of such data is constructed in Theorem \ref{t:Theorem on uniqueness of minimum norm solutions} below. Despite the falsity of \eqref{i:Iwaniec (2)}, claim \eqref{i:Iwaniec (1)} and Iwaniec's conjecture itself remain open. In the case $n = 2$, $p=1$, a novel Banach space geometric approach was presented in~\cite{Lin15} to attack claim \eqref{i:Iwaniec (1)}.

In this work, we present a natural abstract framework for the ideas of \cite{Lin15} and study the surjectivity question for rather general quadratic compensated compactness quantities, streamlining the exposition of~\cite{Lin15} considerably. We also make further progress on the main special case $J \colon \dot{W}^{1,2}(\R^2,\R^2) \to \mathcal{H}^1(\R^2)$. In \textsection \ref{s:The Jacobian equation with Lp data}, we discuss how to adapt the methods to the case $n = 2$, $1 < p < \infty$.

\subsection{Connection to commutators} \label{ss:Connection to commutators}
Question \eqref{e:Jacobian problem} can be seen as a variant of the classical \emph{factorisation problem} from complex analysis~\cite[\textsection 4.2]{Rud69}. Indeed, an equivalent formulation of \eqref{e:Jacobian problem} in terms of differential forms is whether $\H^1(\R^n) = * \wedge_{j=1}^n \tp{d} \dot{W}^{1,n}(\R^n)$ (since $Ju = * \tp{d} u_1 \wedge \cdots \wedge \tp{d} u_n$). The decomposition \eqref{e:Jacobian decomposition} is called a \emph{weak factorisation} of $\H^1(\R^n)$. In the plane, one can also deduce \eqref{e:Jacobian decomposition} from the two-sided commutator estimate
\begin{equation} \label{Two-sided estimate}
c \norm{b}_{\operatorname{BMO}} \le \norm{[b,T]}_{L^2 \to L^2} \le C \norm{b}_{\operatorname{BMO}}
\end{equation}
for the Beurling transform $T = \S \colon L^2(\C,\C) \to L^2(\C,\C)$ and $b \in \tp{BMO}(\C)$~\cite{Lin15}. 

Estimates of the form \eqref{Two-sided estimate} go back to the seminal work~\cite{CRW76} of Coifman \& al., with Nehari's theorem on Hankel operators as a precursor. When $j \in \{1,\ldots,n\}$ and $T = R_j$ is a Riesz transform, the \emph{upper bound estimate} $\norm{[b,R_j]}_{L^2 \to L^2} \le C \norm{b}_{\text{BMO}}$~\cite{CRW76}, the formula
\begin{equation} \label{e:Commutator formula}
\int_{\R^n} b (\omega R_j \gamma + \gamma R_j \omega) = \int_{\R^n} \omega [b,R_j] \gamma
\end{equation}
and $\mathcal{H}^1-\operatorname{BMO}$ duality imply that $\omega R_j \gamma + \gamma R_j \omega \in \H^1(\R^n)$ for all $\omega,\gamma \in L^2(\R^n)$. The \emph{lower bound estimate} $\norm{[b,R_j]}_{L^2 \to L^2} \ge c \norm{b}_{\text{BMO}}$~\cite{CRW76,Jan78,Uch78} gives the weak factorisation $\mathscr{H}^1(\R^n) = \{\sum_{i=1}^\infty (\omega_i R_j \gamma_i + \gamma_i R_j \omega_i) \colon \sum_{i=1}^\infty (\norm{\omega_i}_{L^2}^2 + \norm{\gamma_i}_{L^2}^2) < \infty\}$ by simple functional analysis. By a result of Uchiyama, the commutator $[b,R_j]$ is compact if and only if $b \in \tp{CMO}(\R^n)$. Note that \eqref{e:Commutator formula} and the compactness of $[b,R_j]$ immediately lead to the weak-to-weak$^*$ sequential continuity of the quadratic operator $(\omega,\gamma) \mapsto \omega R_j \gamma + \gamma R_j \omega \colon L^2(\R^n)^2 \to \H^1(\R^n)$. These results have been extended in numerous ways, as reviewed in~\cite{Wick20}. In \textsection \ref{ss:Commutators of Calderon-Zygmund operators} we briefly discuss the case of commutators with Calder\'on-Zygmund operators.

In Assumptions \ref{Assumption on the bilinear operator}--\ref{Assumption on the bilinear operator 2} below, we axiomatise the above-mentioned properties of the Riesz transform $R_j$ and the quantity $\omega R_j \gamma + \gamma R_j \omega$. We study the factorisation problem for quadratic weakly continuous quantities in real-variable function spaces such as $\H^1(\R^n)$, using isometric Banach space geometry as the main tool. Since our choice of methodology is rather unconventional in the study of nonlinear PDE's, we motivate it at length in \textsection \ref{ss:Overall strategy and aim}. First, however, we specify the mathematical setting of this paper.

\subsection{The mathematical setting}
We fix a real Banach space $X$ with a separable dual $X^*$ and a real or complex Hilbert space $H$, and we denote the coefficient field of $H$ by $\mathbb{K} \in \{\R,\C\}$. The canonical examples are $X = \operatorname{CMO}(\R^n)$, $X^* = \mathcal{H}^1(\R^n)$, $X^{**} = \operatorname{BMO}(\R^n)$ and $H = L^2(\R^n,\mathbb{R}^m)$ or $H = L^2(\R^n,\C)$, where $n ,m \in \N$.

\begin{assumption} \label{Assumption on the bilinear operator}
A bilinear mapping
\[(b,\omega) \mapsto T_b \omega \colon X^{**} \times H \to H\]
satisfies the following conditions:

\renewcommand{\labelenumi}{(\roman{enumi})}
\begin{enumerate}
\item $c \norm{b}_{X^{**}} \le \norm{T_b}_{H \to H} \le C \|b\|_{X^{**}}$ for every $b \in X^{**}$,

\item $T_b$ is compact for every $b \in X$.
\end{enumerate}
\end{assumption}

\begin{assumption} \label{Assumption on the bilinear operator 2}
The bilinear mapping $(b,\omega) \mapsto T_b \omega$ satisfies the following conditions for every $b \in X^{**}$:

\renewcommand{\labelenumi}{(\roman{enumi})}
\begin{enumerate}
\item $T_b$ is self-adjoint.

\item $\norm{T_b}_{H \to H} = \sup_{\norm{f}_H=1} \langle T_b f, f \rangle$.
\end{enumerate}
\end{assumption}

\begin{definition} \label{Definition of the quadratic quantity}
Given $X$, $H$ and $(b,\omega) \mapsto T_b f$ we define a norm-to-norm and weak-to-weak$^*$ sequentially continuous mapping $Q \colon H \to X^*$ by
\[\langle b, Q \omega \rangle_{X-X^*} \defeq \langle T_b \omega, \omega \rangle_H.\]
\end{definition}

Henceforth, Assumptions \ref{Assumption on the bilinear operator}--\ref{Assumption on the bilinear operator 2} will remain in place for the rest of the introduction. Assumption \ref{Assumption on the bilinear operator 2} is made mainly to make the quadratic operator $Q$ real-valued and ensure that $X^* = \{\sum_{j=1}^\infty Q \omega_j \colon \sum_{j=1}^\infty \norm{\omega_j}_H^2 < \infty\}$. Whenever the map $(b,\omega) \mapsto T_b \omega$ satisfies Assumption \ref{Assumption on the bilinear operator}, the modified operator $(b,(\omega,\gamma)) \mapsto \tilde{T}_b (\omega,\gamma) \defeq (T_b^* \gamma, T_b \omega) \colon X^{**} \times (H \times H) \to H \times H$ satisfies Assumptions \ref{Assumption on the bilinear operator}--\ref{Assumption on the bilinear operator 2} (see Proposition \ref{p:Corollary on modified operator}). Examples \ref{ex:Complex Jacobian}--\ref{ex:Operators in terms of Hilbert transforms} illustrate the role of Assumption \ref{Assumption on the bilinear operator 2} further.

The planar Jacobian arises as follows (see Example \ref{ex:Complex Jacobian}): defining $T_b \colon L^2(\C,\C) \to L^2(\C,\C)$ by $T_b \omega \defeq \overline{(\S b - b \S) \overline{\S \omega}}$, where $\S$ is the Beurling transform, we can write $Q \omega = \abs{\S \omega}^2 - \abs{\omega}^2 = \abs{u_z}^2 - \abs{u_{\bar{z}}}^2 = Ju$, where $u \in \dot{W}^{1,2}(\C,\C)$ is the Cauchy transform of $\omega \in L^2(\C,\C)$.

\vspace{0.3cm}
Assumptions \ref{Assumption on the bilinear operator}--\ref{Assumption on the bilinear operator 2} are not enough to determine whether $Q(H) = X^*$. Indeed, $Q \colon L^2(\R) \to \H^1(\R)$, $Q(\omega) = \omega^2 - (H \omega)^2$ is non-surjective but (its G\^{a}teaux derivative) $\tilde{Q} \colon L^2(\R) \times L^2(\R) \to \H^1(\R)$, $\tilde{Q}(\omega,\gamma) = \omega \gamma - H\omega H \gamma$ is surjective (see Example \ref{ex:Operators in terms of Hilbert transforms}). We address the following question:
\begin{question} \label{q:Assumption question}
Under which extra assumptions is $Q(H) = X^*$?
\end{question}

Below we mostly study all the operators given by Definition \ref{Definition of the quadratic quantity} in a unified manner, as precise information on $Q(H)$ is valuable whether one intends to prove surjectivity or non-surjectivity. However, in Assumption \ref{Assumption 2} we specify a useful criterion which \emph{holds} for the operators $\omega \mapsto \omega^2-(H\omega)^2 \colon L^2(\R) \to \H^1(\R)$ and $\omega \mapsto \abs{\S\omega}^2-\abs{\omega}^2 \colon L^2(\C,\C) \to \H^1(\C)$ but \emph{fails} for their G\^ateaux derivatives.

\subsection{Overall strategy and aim} \label{ss:Overall strategy and aim}
We next describe the motivation behind the approach adopted in ~\cite{Lin15} and this paper. Elementary proofs of various statements are given in \textsection \ref{Banach space geometric preliminaries}.

\vspace{0.3cm}
$\bullet$ Whenever $f \in X^*$ and the equation $Q \gamma = f$ has a solution, it has a \emph{minimum norm solution} $\omega \in H$, that is, $Q \omega = f$ and $\norm{\omega}_H = \min_{Q \gamma = f} \norm{\gamma}_H$.

$\bullet$ We define an \emph{energy functional} $\E \colon X^* \to \R \cup \{\infty\}$ by
\[\E(f) \defeq \begin{cases}
\min\{\norm{\omega}_H^2 \colon Q \omega = f\}, & f \in Q(H), \\
\infty, & f \notin Q(H).
\end{cases}\]
Almost by definition, $\E \ge \norm{\cdot}_{X^*}$. Claim \eqref{i:Iwaniec (1)} in Strategy \ref{s:Iwaniec} can be stated equivalently as the estimate $\E \lesssim \norm{\cdot}_{X^*}$.

$\bullet$ By a nonlinear version of the Banach-Schauder open mapping theorem, recently proved by Guerra \& al. in~\cite{GKL20}, for translation-invariant operators (such as the Jacobian) we have
\[\tp{either } \E \lesssim \norm{\cdot}_{X^*} \tp{ in } X^* \tp{ or } \E = \infty \tp{ outside a meagre set.}\]
In particular, the surjectivity of $Q \colon H \to X^*$ is \emph{equivalent} to the statement that $Q(H)$ is dense in $X^*$ and every minimum norm solution satisfies $\norm{\omega}_H^2 \lesssim \norm{Q \omega}_{X^*}$. This gives a metamatematical justification for taking claim \eqref{i:Iwaniec (1)} as a goal.

$\bullet$ Given a minimum norm solution $\omega \in H$ of $Q \omega = f$, we may use calculus of variations to study $\omega$ via perturbed solutions $\omega_\epsilon \in H$ of $Q \omega_\epsilon = f$, $\norm{\omega_\epsilon - \omega}_H \to 0$. However, it tends to be very hard to construct variations that satisfy the nonlinear constraint $Q \omega_\epsilon = f$. In particular, the first variation $\omega_\epsilon = \omega + \epsilon \varphi$, $\varphi \in H$, is in general unavailable.

$\bullet$ The constraint $Q \omega_\epsilon = f$ could be relaxed if we found a \emph{Lagrange multiplier}, that is, $b \in X^{**}$ satisfying
\begin{equation} \label{Lagrange multiplier condition}
\left. \frac{d}{d\epsilon} (\langle b, Q(f + \epsilon \varphi) \rangle_{X^{**}-X^*} - \norm{f + \epsilon \varphi}_H^2) \right|_{\epsilon = 0} = 0 \qquad \text{for every } \varphi \in H.
\end{equation}
If, furthermore, $\|b\|_{X^{**}} \le C$ uniformly in $f$, then setting $\varphi = f$ in \eqref{Lagrange multiplier condition} would give the sought inequality $\E \lesssim \norm{\cdot}_{X^*}$. However, given an arbitrary minimum norm solution, the construction of a Lagrange multiplier $b \in X^{**}$ is a formidable task--in particular, the standard Liusternik-Shnirelman method is not applicable.

$\bullet$ Nevertheless, many minimum norm solutions do possess a Lagrange multiplier. Indeed, \eqref{Lagrange multiplier condition} says that $\omega$ is a critical point of the functional $I_b \colon H \to \R$,
\[I_b(\theta) \defeq \langle b, Q \theta \rangle_{X^{**}-X^*} - \norm{\theta}_H^2 = \langle T_b \theta, \theta \rangle_{H} - \norm{\theta}_H^2.\]
Now, if $b \in X$ and $\norm{T_b}_{H \to H} = 1$, then $\sup_{\norm{\theta}_H=1} I_b(\theta) = 0$ is attained at some $\omega \in \mathbb{S}_H$, so that $b$ is a Lagrange multiplier of $\omega$ and $\norm{\omega}_H^2 \lesssim \norm{Q \omega}_{X^*}$.

$\bullet$ As every $b \in X$ with $\norm{T_b}_{H \to H} = 1$ is a Lagrange multiplier, we use the norm of $X$ given by
\[\norm{b}_{X_Q} \defeq \sup_{\norm{\omega}_H = 1} \langle b, Q \omega \rangle_{X-X^*} = \norm{T_b}_{H \to H},\]
endow $X^*$ with the dual norm
\[\norm{f}_{X^*_Q} \defeq \sup_{\norm{b}_{X_Q}=1} \langle f,b \rangle_{X^*-X}\]
and denote the resulting Banach spaces by $X_Q$ and $X^*_Q$. Thus every $b \in \mathbb{S}_{X_Q}$ is a Lagrange multiplier of some $\omega \in \mathbb{S}_H$ with $Q \omega \in \mathbb{S}_{X_Q^*}$.

$\bullet$
As a consequence, the set
\[\mathscr{A} \defeq \{\omega \in \mathbb{S}_H \colon Q \omega \in \mathbb{S}_{X^*_Q}\} \subset \mathbb{S}_H\]
and its image $Q(\mathscr{A}) \subset \mathbb{S}_{X^*_Q}$ are rather large. The motivation above leads to the following more precise variant of Question \ref{q:Assumption question}, presented for the planar Jacobian in~\cite{Lin15}:

\begin{question} \label{q:Lindberg}
Is $Q(\mathscr{A}) = \mathbb{S}_{X^*_Q}$? Equivalently, is $\E = \norm{\cdot}_{X^*}$?
\end{question}

As already noted, there exist some natural cases where the answer to Question \ref{q:Lindberg} is negative. A positive answer holds for the simple operator $(b,(\omega,\gamma)) \mapsto (b\omega,-b\gamma) \colon \R \times \R^2 \to \R^2$. It is hoped that suitable further properties of $T$, beyond Assumptions \ref{Assumption on the bilinear operator}--\ref{Assumption on the bilinear operator 2}, will yield an answer to Question \ref{q:Lindberg} for natural classes of operators. In this paper we present several partial results.

\subsection{Main results}
As a first largeness criterion on $Q(\A)$ we mention that
\begin{equation} \label{Krein-Milman set inclusion}
Q(\mathscr{A}) \supset \operatorname{ext}(\mathbb{B}_{X^*_Q}),
\end{equation}
where $\operatorname{ext}(\mathbb{B}_{X^*_Q})$ is the set of extreme points of $\mathbb{B}_{X_Q^*}$. Since $X^*_Q$ is a separable dual space, the Bessaga-Pelczynski Theorem (Theorem \ref{Bessaga-Pelczynski theorem}) implies that $\overline{\operatorname{co}}(\operatorname{ext}(\mathbb{B}_{X^*_Q})) = \mathbb{B}_{X^*_Q}$. The inclusion \eqref{Krein-Milman set inclusion} is contained in Theorem \ref{l:Krein-Milman result}. We will find more refined information on $Q(\mathscr{A})$ by using the set-valued duality mapping.

\begin{definition}
The \emph{duality mapping} $D \colon \mathbb{S}_{X_Q} \to 2^{\mathbb{S}_{X^*_Q}}$ is defined by
\[D(b) \defeq \{h \in \mathbb{S}_{X^*_Q} \colon \langle b, h \rangle_{X_Q-X_Q^*} = 1\}.\]
We denote the set of \emph{norm-attaining points} by $\textit{NA}_{\norm{\cdot}_{X_Q}} \defeq \cup_{b \in \mathbb{S}_{X_Q}} D(b)$.
\end{definition}

By the Bishop-Phelps Theorem \cite[Theorem 3.54]{FHHMPZ}, $NA_{\norm{\cdot}_{X_Q}}$ is dense in $\mathbb{S}_{X^*_Q}$. Since $Q(\mathscr{A})$ is closed, Question \ref{q:Lindberg} reduces to the question whether $D(b) \cap Q \mathscr{A} = D(b)$ for every $b \in \mathbb{S}_{X_Q}$. Before presenting partial results we formulate a useful extra assumption which is satisfied by the planar Jacobian. Its aim is to quantify the symmetries of the class $\A$ (see Remark \ref{r:Symmetries}).

\begin{assumption} \label{Assumption 2}
If $\omega,\gamma \in \mathscr{A}$ satisfy $Q \omega = Q \gamma$, then $Q'_\omega (c \gamma) \neq 0$ for some $c \in \mathbb{S}_{\mathbb{K}}$.
\end{assumption}

The following result collects partial results on Question \ref{q:Lindberg}; for relevant definitions see \textsection \ref{Smoothness properties of norms and duality mappings}. The parts (iv)--(vi) are new also in the case of the Jacobian.

\begin{theorem} \label{t:Theorem on duality mapping}
Under Assumptions \ref{Assumption on the bilinear operator}--\ref{Assumption on the bilinear operator 2}, the following statements hold:
\renewcommand{\labelenumi}{(\roman{enumi})}
\begin{enumerate}
\item For every $b \in \mathbb{S}_{X_Q}$, the convex set $D(b)$ has finite affine dimension.

\item For every $b \in \mathbb{S}_{X_Q}$, $D(b) \cap Q(\mathscr{A})$ contains $\operatorname{ext}(D(b))$ and is path-connected.

\item The norm and relative weak$^*$ topologies coincide in $Q (\mathscr{A})$.
\end{enumerate}
Under the further Assumption \ref{Assumption 2}, the following statements hold:
\begin{enumerate}
\setcounter{enumi}{3}
\item For every $b$ in a dense, relatively open subset of $\mathbb{S}_{X_Q}$, $D(b) = \{Q \omega\}$ for some $\omega \in \mathscr{A}$ and $\norm{\cdot}_{X_Q}$ is Fr\'echet differentiable at $b$.

\item $D \colon \mathbb{S}_{X_Q} \to 2^{\mathbb S_{X_Q^*}}$ is a cusco map.

\item The norm and relative weak$^*$ topologies also coincide in $\textit{NA}_{\norm{\cdot}_{X_Q}}$.
\end{enumerate}
\end{theorem}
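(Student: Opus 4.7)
The core structural fact underlying all parts is that each $T_b$ is a compact self-adjoint operator on $H$ whose top eigenvalue equals $\|T_b\|_{H \to H}$ by Assumption~\ref{Assumption on the bilinear operator 2}(ii). For $b \in \mathbb{S}_{X_Q}$, the top eigenspace $E_b := \ker(T_b - I_H)$ is therefore finite-dimensional, with a spectral gap $1 - \mu_b > 0$ where $\mu_b$ denotes the second-largest eigenvalue of $T_b$. Given $h \in D(b)$, I take a near-minimum weak factorisation $h = \sum_j Q \omega_j$ with $\sum_j \|\omega_j\|_H^2 \le 1 + \epsilon$; the saturation
\[
1 = \langle b, h\rangle_{X-X^*} = \sum_j \langle T_b \omega_j, \omega_j\rangle_H \le \sum_j \|\omega_j\|_H^2 \le 1 + \epsilon,
\]
combined with the spectral-gap bound $\|\omega\|_H^2 - \langle T_b \omega, \omega\rangle_H \ge (1 - \mu_b)\|P_{E_b^\perp}\omega\|_H^2$, forces the $E_b^\perp$-projections of the $\omega_j$ to have vanishing total square-norm as $\epsilon \to 0$. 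Replacing each $\omega_j$ by its $E_b$-projection and controlling the cross terms by Cauchy-Schwarz places $h$ in the finite-dimensional subspace $V_b := \operatorname{span}\{Q v : v \in E_b\} \subset X^*$, which proves (i) and simultaneously yields $D(b) = \operatorname{conv}(Q(E_b \cap \mathbb{S}_H))$. Milman's theorem then gives $\operatorname{ext}(D(b)) \subset Q(E_b \cap \mathbb{S}_H) \subset Q(\mathscr{A})$, and continuity of $Q$ on the path-connected sphere $E_b \cap \mathbb{S}_H$ gives (ii). For (iii), if $Q \omega_n \to Q\omega$ weak-$*$ with $\omega_n, \omega \in \mathscr{A}$, extract $\omega_n \rightharpoonup \tilde\omega$ in $H$; choosing $b$ with $Q\omega \in D(b)$, compactness of $T_b$ gives $\langle T_b \omega_n, \omega_n\rangle_H \to \langle T_b \tilde\omega, \tilde\omega\rangle_H = 1$, which together with $\|\tilde\omega\|_H \le 1$ forces $\|\tilde\omega\|_H = 1$, hence $\omega_n \to \tilde\omega$ in $H$-norm and $Q \omega_n \to Q\omega$ in $X_Q^*$-norm.

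For (iv) the candidate dense open set is $\Sigma := \{b \in \mathbb{S}_{X_Q} : \dim_{\mathbb K} E_b = 1\}$. Openness is analytic perturbation theory: if $T_b$ has the isolated simple eigenvalue $1$ and $b_n \to b$ in $X_Q$, then $T_{b_n} \to T_b$ in operator norm keeps the total multiplicity near $1$ constant, and Assumption~\ref{Assumption on the bilinear operator 2}(ii) pinning the top eigenvalue of each $T_{b_n}$ to $\|b_n\|_{X_Q} = 1$ forces $b_n \in \Sigma$ for $n$ large. Under Assumption~\ref{Assumption 2}, $D(b)$ is a singleton iff $b \in \Sigma$: the ``if'' uses $Q(c\omega) = |c|^2 Q \omega$ for $|c|=1$; for the contrapositive, orthonormal $\omega, v \in E_b$ give either $Q \omega \ne Q v$ (ruling out singleton) or $Q \omega = Q v$, whereupon Assumption~\ref{Assumption 2} yields $\langle T_{\cdot} \omega, v\rangle \not\equiv 0$ as an element of $X^*$, and the curve $\omega(t,\phi) := \cos(t)\omega + e^{i\phi}\sin(t) v$ in $E_b \cap \mathbb{S}_H$ satisfies
\[
Q\omega(t,\phi) = Q\omega + \sin(2t)\operatorname{Re}\!\bigl(e^{i\phi}\langle T_{\cdot}\omega, v\rangle\bigr),
\]
nonconstant for suitable $\phi$. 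Density of $\Sigma$ follows from Asplundness of $X_Q$: separability of $X^*$ together with Assumption~\ref{Assumption on the bilinear operator}(i) makes $X_Q^*$ separable, hence $X_Q$ is Asplund, and $\|\cdot\|_{X_Q}$ is G\^ateaux-differentiable on a dense $G_\delta$, which is precisely the set where $D(b)$ is a singleton. The singleton is $\{Q\omega\}$ with $\omega \in \mathscr{A}$ by (ii). Fr\'echet differentiability at $b \in \Sigma$ follows from \v{S}mulyan's criterion: for $f_n \in \mathbb{S}_{X_Q^*}$ with $\langle b, f_n\rangle \to 1$, decompose $f_n = \sum_j Q \omega_{n,j}$ with $\omega_{n,j} = \alpha_{n,j}\omega + \omega_{n,j}^\perp$ (using $E_b = \mathbb{K}\omega$); the spectral-gap argument gives $\sum_j \|\omega_{n,j}^\perp\|_H^2 \to 0$ and $\sum_j |\alpha_{n,j}|^2 \to 1$, and expanding
\[
Q\omega_{n,j} = |\alpha_{n,j}|^2 Q\omega + Q\omega_{n,j}^\perp + 2\operatorname{Re}\!\bigl(\alpha_{n,j} \langle T_{\cdot}\omega, \omega_{n,j}^\perp\rangle\bigr)
\]
with Cauchy-Schwarz yields $f_n \to Q\omega$ in $X_Q^*$-norm.

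Claims (v) and (vi) follow from the same apparatus. For (v), $D(b)$ is norm-compact by (i) and non-empty by Bishop-Phelps; for norm upper semicontinuity, let $b_n \to b$ in $X_Q$ and $h_n \in D(b_n)$, writing $h_n = \sum_j Q\omega_{n,j}$ with $\omega_{n,j} \in E_{b_n}$ and $\sum_j \|\omega_{n,j}\|_H^2 = 1$. The identity $T_{b_n}\omega_{n,j} = \omega_{n,j}$ together with $T_{b_n} \to T_b$ in norm and compactness of $T_b$ yields (after diagonal extraction) $\omega_{n,j} \to \omega_j \in E_b$ in $H$-norm, and the bound $\|Q\omega_{n,j} - Q\omega_j\|_{X_Q^*} \le C(\|\omega_{n,j}\|_H^2 + \|\omega_j\|_H^2)$ combined with dominated convergence gives $h_n \to \sum_j Q\omega_j \in D(b)$ in $X_Q^*$-norm. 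For (vi), suppose $h_n \in \textit{NA}_{\|\cdot\|_{X_Q}}$, $h_n \to h$ weak-$*$ and $h \in D(b)$; then $\langle b, h_n\rangle \to 1$, and the spectral-gap argument forces $\sum_j \|P_{E_b^\perp}\omega_{n,j}\|_H^2 \to 0$ in any near-minimum decomposition of $h_n$. Hence the projected sum $\hat h_n := \sum_j Q(P_{E_b}\omega_{n,j})$ satisfies $\|h_n - \hat h_n\|_{X_Q^*} \to 0$, and since both $\hat h_n$ and $h$ lie in the fixed finite-dimensional subspace $V_b$ of $X^*$, on which weak-$*$ and norm topologies coincide, $\hat h_n \to h$ in norm, and therefore $h_n \to h$ in $X_Q^*$-norm.

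The main obstacle in this plan is part (iv): one must show that Assumption~\ref{Assumption 2} is precisely what rules out the pathological possibility that $\dim_\mathbb{K} E_b \ge 2$ yet $Q$ is constant on the whole sphere $E_b \cap \mathbb{S}_H$ (which would make $D(b)$ a singleton even though the eigenspace is degenerate). The curve-expansion argument above handles this, but carrying it through cleanly in both $\mathbb{K} = \mathbb{R}$ and $\mathbb{K} = \mathbb{C}$ requires careful accounting of the real-linear derivative $Q'_\omega(c\gamma) = 2\operatorname{Re}(\bar c \langle T_{\cdot}\omega, \gamma\rangle)$. Everything else is bookkeeping around the spectral gap $1 - \mu_b$ and the equivalence of weak-$*$ and norm topologies on the finite-dimensional subspace $V_b$.
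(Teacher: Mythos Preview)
Your argument is essentially correct and in several places cleaner than the paper's, but the route is genuinely different in two spots, and there are two small gaps.

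\textbf{Differences from the paper.} For (i) the paper does not use the spectral gap or near-minimal factorisations at all: it first proves $\operatorname{ext}(\mathbb{B}_{X_Q^*}) \subset Q(\mathscr{A})$ via Milman's theorem and the James-boundary property of $Q(\mathscr{A})$, deduces $\operatorname{ext}(D(b)) \subset Q(\mathscr{A}) \cap D(b) \subset Q(\ker(I-T_b))$, and then invokes the Krein--Milman property of the separable dual $X_Q^*$ to conclude that $D(b)$ lies in the span of $Q(\ker(I-T_b))$. Your spectral-gap argument is more constructive and yields the sharper formula $D(b)=\operatorname{conv}\bigl(Q(E_b\cap\mathbb{S}_H)\bigr)$, which the paper does not state. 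For (iv) the paper goes via Theorem~\ref{t:Theorem on uniqueness of minimum norm solutions}: it first proves uniqueness of minimum-norm solutions at extreme points and deduces as a corollary that $\dim_{\mathbb K}E_b=1$ at every Fr\'echet point; the openness argument then runs by contradiction. Your direct proof that, under Assumption~\ref{Assumption 2}, $D(b)$ is a singleton iff $\dim_{\mathbb K}E_b=1$ bypasses Theorem~\ref{t:Theorem on uniqueness of minimum norm solutions} entirely. A bonus of your route is that your proofs of (v) and (vi) never actually invoke Assumption~\ref{Assumption 2}; in the paper those claims are placed under Assumption~\ref{Assumption 2} only because the proof of (v) cites the openness of $\{b:\dim E_b\le n\}$, stated as a remark after (iv).

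\textbf{Two gaps.} In (iii) you ``choose $b$ with $Q\omega\in D(b)$'', i.e.\ a $b\in\mathbb{S}_{X_Q}$ at which $Q\omega$ attains its norm. Such a $b$ need not exist (Proposition~\ref{p:Having a Lagrange multiplier} only supplies a Lagrange multiplier in $\mathbb{S}_{X_Q^{**}}$). The detour through $T_b$ is in fact unnecessary: once $\omega_n\rightharpoonup\tilde\omega$, weak-to-weak$^*$ continuity of $Q$ gives $Q\tilde\omega=Q\omega$ directly, hence $\|\tilde\omega\|_H^2\ge\|Q\tilde\omega\|_{X_Q^*}=1$, which is how the paper argues. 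In (v) your ``diagonal extraction'' over the index $j$ tacitly assumes that the number of terms in the decomposition $h_n=\sum_j Q\omega_{n,j}$ is bounded uniformly in $n$. This is true---your own perturbation argument for (iv) gives $\dim_{\mathbb K}E_{b_n}\le\dim_{\mathbb K}E_b$ for $n$ large, and Carath\'eodory then bounds the number of terms---but you should say so explicitly; the paper does (Remark~\ref{r:Higher dimensions}).
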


The proof is presented in \textsection \ref{s:Proof of Theorem on duality mapping}. The second main result concerns uniqueness of minimum norm solutions and is proved in  \textsection \ref{s:The proof of Theorem on uniqueness of minimum norm solutions}.

\begin{theorem} \label{t:Theorem on uniqueness of minimum norm solutions}
Suppose Assumptions \ref{Assumption on the bilinear operator}--\ref{Assumption on the bilinear operator 2} and \ref{Assumption 2} hold, and let $f \in \operatorname{ext}(\mathbb{S}_{X_Q^*})$. Then the minimum norm solution $\omega \in \A$ of $Q \omega = f$ is unique up to multiplication by $c \in \mathbb{S}_{\mathbb{K}}$.
\end{theorem}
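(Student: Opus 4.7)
The proof plan is a contradiction argument exploiting extremality of $f$: assume $\omega_1,\omega_2 \in \mathscr{A}$ both solve $Q\omega_i = f$ and are linearly independent over $\mathbb{K}$ (if linearly dependent, matching norms force $\omega_2 = c\omega_1$ for some $c \in \mathbb{S}_\mathbb{K}$), and construct two distinct points of $\mathbb{B}_{X_Q^*}$ whose convex combination equals $f$. Geometrically, restricting $Q$ to a two-parameter family of unit vectors in the span of $\omega_1$ and $\omega_2$ produces a curve in $\mathbb{B}_{X_Q^*}$ through $f$, and the task is to collapse this curve onto a nontrivial line segment through $f$ in some direction.

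Concretely, set $\alpha \defeq \langle \omega_2,\omega_1 \rangle$ and $\tilde e \defeq (\omega_2 - \alpha \omega_1)/\sqrt{1-|\alpha|^2}$, so that $\tilde e \in \mathbb{S}_H$ and $\tilde e \perp \omega_1$. For $s,t \in \R$ with $s^2+t^2=1$ and $c \in \mathbb{S}_\mathbb{K}$, the vector $\omega(s,t,c) \defeq s\omega_1 + tc\tilde e$ lies in $\mathbb{S}_H$, and the $\R$-linear family of functionals $\Psi_c \in X^*$ defined by $\langle b,\Psi_c \rangle \defeq \Re(\bar c\langle T_b\omega_1,\tilde e \rangle)$ arises naturally. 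A direct bilinear expansion using self-adjointness of $T_b$ gives $Q\omega(s,t,c) = s^2 f + t^2 Q\tilde e + 2st\Psi_c$, while a separate computation of $Q\omega_2 = f$, using $\omega_2 = \alpha\omega_1 + \sqrt{1-|\alpha|^2}\,\tilde e$, yields $Q\tilde e = f - \frac{2}{\sqrt{1-|\alpha|^2}}\Psi_{\bar\alpha}$. Combining, the key family
\[
Q\omega(s,t,c) = f - \tfrac{2t^2}{\sqrt{1-|\alpha|^2}}\,\Psi_{\bar\alpha} + 2st\,\Psi_c
\]
lies in $\mathbb{B}_{X_Q^*}$ for every admissible $(s,t,c)$.

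I then split into two cases. If $\Psi_{\bar\alpha} = 0$ (automatic when $\alpha = 0$), then $Q\tilde e = f \in \mathbb{S}_{X_Q^*}$, so $\tilde e \in \mathscr{A}$; Assumption \ref{Assumption 2} applied to $(\omega_1,\tilde e)$ produces $c \in \mathbb{S}_\mathbb{K}$ with $Q'_{\omega_1}(c\tilde e) = 2\Psi_c \neq 0$, and the evaluations $(s,t) = \pm(1/\sqrt 2,1/\sqrt 2)$ exhibit $f \pm \Psi_c \in \mathbb{B}_{X_Q^*}$ as two distinct points with midpoint $f$. If instead $\Psi_{\bar\alpha} \neq 0$ then $\alpha \neq 0$; setting $c^* \defeq \bar\alpha/|\alpha| \in \mathbb{S}_\mathbb{K}$ and using $\Psi_{\bar\alpha} = |\alpha|\Psi_{c^*}$, the specialisation $c = c^*$, $(s,t) = (\cos\theta,\sin\theta)$ collapses the family onto the line through $f$ in the direction $\Psi_{c^*}$:
\[
Q\omega(\cos\theta,\sin\theta,c^*) = f + \left[\sin(2\theta) - \tfrac{|\alpha|(1-\cos 2\theta)}{\sqrt{1-|\alpha|^2}}\right]\Psi_{c^*}.
\]
The bracketed coefficient vanishes at $\theta = 0$ with derivative $2$, hence takes both signs in any neighbourhood of $0$; since $\Psi_{c^*} \neq 0$, I obtain $f + \lambda_\pm\Psi_{c^*} \in \mathbb{B}_{X_Q^*}$ with $\lambda_- < 0 < \lambda_+$, and the weight $\mu \defeq -\lambda_-/(\lambda_+-\lambda_-) \in (0,1)$ writes $f$ as $\mu(f+\lambda_+\Psi_{c^*}) + (1-\mu)(f+\lambda_-\Psi_{c^*})$, contradicting extremality.

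The main obstacle is the non-orthogonal case $\alpha \neq 0$: a generic direction $c$ produces a genuinely two-dimensional curve in which $\Psi_{\bar\alpha}$ and $\Psi_c$ span independent directions, and it is not obvious how to extract a one-dimensional segment through $f$. The specific choice $c = c^*$ is precisely what aligns the two "cross" contributions along a common line. Assumption \ref{Assumption 2} enters only in the complementary branch $\Psi_{\bar\alpha} = 0$, where it excludes the degenerate scenario $\langle T_b\omega_1,\tilde e\rangle \equiv 0$ in which the construction would leave the family trivially equal to $f$.
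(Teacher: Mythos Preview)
Your proof is correct and follows essentially the same route as the paper: orthogonally decompose the second solution, expand $Q$ bilinearly on $\operatorname{span}\{\omega_1,\tilde e\}$, and use the one-parameter family $Q(\cos\theta\,\omega_1+\sin\theta\,c^*\tilde e)$ to exhibit $f$ as a nontrivial midpoint. The only difference is organizational: the paper case-splits on $\alpha=0$ versus $\alpha\neq 0$ and in the latter case first uses extremality to force $\Psi_{c^*}=0$ (thereby reducing to $Q\tilde e=f$) before invoking Assumption~\ref{Assumption 2}, whereas you case-split on $\Psi_{\bar\alpha}=0$ versus $\Psi_{\bar\alpha}\neq 0$ and dispatch the latter directly without Assumption~\ref{Assumption 2}; the underlying computations coincide.
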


In view of \eqref{Krein-Milman set inclusion} it is natural to ask whether $X_Q^*$ is strictly convex, that is, $\tp{ext}(\mathbb{B}_{X_Q^*}) = \mathbb{S}_{X_Q^*}$. In the case of the planar Jacobian, the answer is negative, as an immediate consequence of Theorems \ref{t:Theorem on duality mapping}--\ref{t:Theorem on uniqueness of minimum norm solutions} and the non-uniqueness of general minimum norm solutions:

\begin{corollary} \label{c:Not stricly convex}
In the case of $\omega \mapsto Q \omega = \abs{\S\omega}^2-\abs{\omega}^2 \colon L^2(\C,\C) \to \mathcal{H}^1(\C)$ we have $\tp{ext}(\mathbb{B}_{X_Q^*}) \subsetneq \mathbb{S}_{X_Q^*}$.
\end{corollary}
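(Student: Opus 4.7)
The plan is to argue by contraposition from Theorem~\ref{t:Theorem on uniqueness of minimum norm solutions}, using the known failure of uniqueness of energy-minimal solutions to the planar Jacobian equation. The planar Jacobian $Q\omega = |\S\omega|^2 - |\omega|^2 \colon L^2(\C,\C) \to \HC$ satisfies Assumptions~\ref{Assumption on the bilinear operator} and~\ref{Assumption on the bilinear operator 2} by Example~\ref{ex:Complex Jacobian}, and Assumption~\ref{Assumption 2} is explicitly listed (in the remark following its statement) as one of the model cases where the criterion holds, so Theorem~\ref{t:Theorem on uniqueness of minimum norm solutions} is available.

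Next I would translate to the language of the Jacobian equation. If $\omega \in L^2(\C,\C)$ and $u \in \W$ is its Cauchy transform, then $u_{\bar z} = \omega$, $u_z = \S\omega$, $Ju = Q\omega$, and $\|Du\|_{L^2}^2 = 2(\|u_z\|_{L^2}^2 + \|u_{\bar z}\|_{L^2}^2) = 4\|\omega\|_{L^2}^2$, since $\S$ is an $L^2$-isometry. Hence minimisation of $\|Du\|_{L^2}$ subject to $Ju = f$ is equivalent to minimisation of $\|\omega\|_{L^2}$ subject to $Q\omega = f$, and the target rotation $u \mapsto e^{i\theta}u$ corresponds exactly to the scalar action $\omega \mapsto e^{i\theta}\omega$ with $e^{i\theta} \in \mathbb{S}_{\C}$.

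Under this dictionary, the counterexample of Guerra et al.~in \cite{GKL21B} to claim~\eqref{i:Iwaniec (2)} of Strategy~\ref{s:Iwaniec} for $(n,p)=(2,1)$ yields some $f_0 \in \HC$ with two minimum-norm solutions $\omega_1,\omega_2$ of $Q\omega = f_0$ satisfying $\omega_2 \neq c\omega_1$ for every $c \in \mathbb{S}_{\C}$. Rescaling by $\lambda > 0$ so that $\|\lambda\omega_j\|_{L^2} = 1$ and setting $f \defeq Q(\lambda\omega_1) = \lambda^2 f_0$, I obtain $\lambda\omega_1,\lambda\omega_2 \in \A$ and $f \in \mathbb{S}_{X_Q^*}$. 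Theorem~\ref{t:Theorem on uniqueness of minimum norm solutions} applied contrapositively then forces $f \notin \ext(\mathbb{S}_{X_Q^*})$; since $\ext(\B_{X_Q^*}) \subset \mathbb{S}_{X_Q^*}$, this establishes $\ext(\B_{X_Q^*}) \subsetneq \mathbb{S}_{X_Q^*}$. The only substantive step is the translation between the two formulations; given Theorem~\ref{t:Theorem on uniqueness of minimum norm solutions} and the \cite{GKL21B} non-uniqueness result, the rest is bookkeeping, so there is no real obstacle.
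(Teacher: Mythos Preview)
Your approach is the paper's own, but there is a genuine gap in the rescaling step. You normalise so that $\norm{\lambda\omega_j}_{L^2}=1$ and then assert $\lambda\omega_j\in\A$ and $f=Q(\lambda\omega_1)\in\SSS_{X_Q^*}$. These two assertions are equivalent, and neither is justified: from $\norm{\lambda\omega_j}_{L^2}=1$ one only gets $\norm{f}_{X_Q^*}\le\norm{\lambda\omega_j}_{L^2}^2=1$. Equality is precisely the membership $\lambda\omega_j\in\A$, and claiming it for a generic minimum-norm solution is tantamount to a special case of the open Question~\ref{q:Lindberg} (cf.\ Corollary~\ref{c:Collection of conditions}). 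Nothing in \cite{GKL21B} tells you that the particular non-unique minimisers constructed there land in $\A$ after this normalisation, so you cannot invoke Theorem~\ref{t:Theorem on uniqueness of minimum norm solutions} as stated.

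The repair is to normalise the datum rather than the solutions: set $f\defeq f_0/\norm{f_0}_{X_Q^*}\in\SSS_{X_Q^*}$. By $2$-homogeneity of $Q$, the minimum-norm solutions of $Q\omega=f$ are rescalings of those for $f_0$ and remain non-unique modulo $\SSS_\C$. Now argue by contradiction: if $f\in\ext(\B_{X_Q^*})$, then \eqref{Krein-Milman set inclusion} gives $f\in Q(\A)$, which forces every minimum-norm solution to have norm exactly $1$ and hence to lie in $\A$; Theorem~\ref{t:Theorem on uniqueness of minimum norm solutions} then yields uniqueness up to rotation, contradicting \cite{GKL21B}. Thus $f\in\SSS_{X_Q^*}\setminus\ext(\B_{X_Q^*})$. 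With this correction your argument coincides with the paper's.
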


It is unclear to the author whether an analogue of Corollary \ref{c:Not stricly convex} holds for the G\^ateaux derivative $(\omega, \gamma) \mapsto Q'_\omega \gamma \colon L^2(\C,\C)^2 \to \H^1(\C)$.

\section{Preliminaries}
The main tools of this work come from isometric Banach space geometry. We collect some notions and results and refer to ~\cite{FHHMZ} and ~\cite{FHHMPZ} for most of the proofs.

\subsection{Smoothness properties of norms and duality mappings} \label{Smoothness properties of norms and duality mappings}
In this subsection, $X$ is a real Banach space. Suppose $U \subset Z$ is open and $g \colon U \to Z$ is convex, where $Z$ is a real Banach space. We say that $g$ is \emph{G\^{a}teaux differentiable} at $x \in U$ if there exists $L \in \mathscr{B}(X,Z)$ such that
\begin{equation} \label{Gateaux condition}
\lim_{t \to 0} \norm{\frac{g(x+th)-g(x)}{t} -Lh} = 0 \quad \tp{for every } h \in X.
\end{equation}
The operator $L$ is then denoted by $g'_x$ and called the \emph{G\^ateaux derivative} of $g$ at $x$. If the limit in \eqref{Gateaux condition} is uniform in $h \in \mathbb{S}_X$, then $g$ is said to be \emph{Fr\'{e}chet differentiable} at $x$.

Note that the convex function $g(x) = \norm{x}_X$ is G\^{a}teaux (Fr\'{e}chet) differentiable at $x \in \mathbb{S}_X$ if and only if it is G\^{a}teaux (Fr\'{e}chet) differentiable at $\lambda x$ for every $\lambda \in \R \setminus \{0\}$. We will use the following theorem of Asplund and Lindenstrauss (see ~\cite[Theorem 8.21]{FHHMPZ}) on the norm of $X$.

\begin{theorem} \label{Asplund-Lindenstrauss theorem}
Suppose $X^*$ is separable and $g \colon X \to \R$ is a continuous convex function. Then $g$ is Fr\'{e}chet differentiable in a dense $G_\delta$ subset of $X$.
\end{theorem}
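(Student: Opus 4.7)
The plan is to characterize Fr\'echet differentiability of $g$ through a vanishing-diameter condition on the subdifferential, and then to obtain density of the relevant open sets via a slicing argument that uses the separability of $X^*$.

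Since a continuous convex $g\colon X\to\R$ is automatically locally Lipschitz, the subdifferential $\partial g(x)$ is nonempty, bounded and locally bounded. A standard convex-analytic fact is that $g$ is Fr\'echet differentiable at $x$ if and only if $\operatorname{diam}(\partial g(B(x,\delta))) \to 0$ as $\delta \to 0^+$. I would therefore set, for each $n \in \N$,
\[U_n \defeq \{x \in X : \operatorname{diam}(\partial g(B(x,\delta))) < 1/n \tp{ for some } \delta > 0\},\]
observe that each $U_n$ is open by definition, and note that the set of Fr\'echet differentiability points of $g$ is exactly $\bigcap_n U_n$. Hence it suffices to show that each $U_n$ is dense in $X$.

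For density I would fix an open ball $B_0 = B(x_0,\eta) \subset X$ and examine the bounded set $C \defeq \overline{\partial g(B_0)} \subset X^*$. Separability of $X^*$ implies, via the classical Dunford-Pettis-Phillips theorem, that $X^*$ has the Radon-Nikod\'ym property, and consequently $C$ is dentable: for every $\epsilon > 0$ there exist $h \in X$ and $\alpha \in \R$ such that the slice $\{x^* \in C : \langle h, x^*\rangle > \alpha\}$ is nonempty and of diameter less than $\epsilon$. Taking $\epsilon = 1/(2n)$ and choosing $y_1 \in B_0$ at which the one-sided directional derivative $g'(y_1;h)$ lies within $\epsilon/2$ of $\sup_{y\in B_0} g'(y;h)$, monotonicity of the directional derivative in the direction $h$ forces $\partial g(B(y_1,\delta))$ into the chosen slice for all sufficiently small $\delta > 0$, and hence $y_1 \in U_n \cap B_0$.

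The main obstacle is the slicing step itself, i.e.\ the dentability of bounded subsets of a separable dual space. I would either cite this or establish it via the standard tree argument: if $C$ were not dentable, one would construct an infinite dyadic tree $\{x_\sigma^*\}$ in $C$ whose levels are uniformly norm-separated, producing an uncountable, uniformly separated family of branches and contradicting the separability of $X^*$. Once dentability is in hand, the passage from a small slice of $\partial g(B_0)$ to a small ball on which the subdifferential has small diameter is a routine convex-analytic calculation using only monotonicity of one-sided directional derivatives.
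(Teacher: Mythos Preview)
The paper does not prove this theorem: it is quoted in the preliminaries as a classical result of Asplund and Lindenstrauss, with a reference to the textbook of Fabian \emph{et al.} There is therefore no argument in the paper to compare your proposal against.

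Your sketch is a correct outline of one standard proof. The reduction to density of the open sets $U_n$ via the subdifferential-diameter characterisation of Fr\'echet differentiability is routine, and invoking dentability of bounded subsets of a separable dual (equivalently, the Radon--Nikod\'ym property) is the right engine for the density step. One small imprecision: the clause ``monotonicity of the directional derivative in the direction $h$ forces $\partial g(B(y_1,\delta))$ into the chosen slice'' does not quite work as written, since a large value of $g'(y_1;h)$ controls only the supremum of $\langle h,\cdot\rangle$ over $\partial g(y_1)$, not the full subdifferential on a neighbourhood. The standard repair is to first pick $y_0 \in B_0$ with some $x_0^* \in \partial g(y_0)$ already in the slice, set $y_1 = y_0 + t_0 h$ for small $t_0 > 0$, and then use monotonicity of the subdifferential map, $\langle y - y_0,\, x^* - x_0^*\rangle \ge 0$, together with the local bound on $\partial g(B_0)$, to force $\langle h, x^*\rangle > \alpha$ for every $x^* \in \partial g(y)$ with $\|y - y_1\| \ll t_0$. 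With that adjustment the argument is complete.
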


Recall that the duality mapping $D \colon \mathbb{S}_X \to 2^{\mathbb{S}_{X^*}}$ is defined by
\[D(x) \defeq \{x^* \in \mathbb{S}_{X^*} \colon \langle x^*, x \rangle_{X^*-X} = 1\}.\]
If $g(x) = \norm{x}_X$ is G\^ateaux differentiable at $x \in \SSS_X$, then $D(x) = \{g'_x\}$.

\begin{definition}
Suppose $Y$ and $Z$ are Hausdorff topological spaces. A mapping $F \colon Y \to 2^Z \setminus \{\emptyset\}$ is said to be \emph{upper semicontinuous} at $y \in Y$ if for any open set $V \supset F(y)$, there exists a neighbourhood $U$ of $y$ in $Y$ such that $F(U) \subset V$.
\end{definition}

We recall characterisations of the norm-to-norm upper semicontinuity of the duality mapping~\cite{GGS78, HL92}.

\begin{theorem} \label{t:GGS}
Let $x \in \mathbb{S}_X$. The following statements are equivalent.
\renewcommand{\labelenumi}{(\roman{enumi})}
\begin{enumerate}
\item $D$ is norm-to-norm upper semicontinuous at $x$ and $D(x)$ is norm compact.

\item For every sequence of points $x_j^*$ in $\mathbb{S}_{X^*}$ such that $\langle x_j^*, x \rangle \to 1$, there exists a subsequence convergent to some $x^* \in D(x)$.

\item The weak$^*$ and norm topologies agree on $\mathbb{S}_{X^*}$ at points of $D(x)$.
\end{enumerate}
\end{theorem}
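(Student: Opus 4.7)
Proof proposal. The plan is to establish the cycle (i) $\Rightarrow$ (ii) $\Rightarrow$ (iii) $\Rightarrow$ (i). The delicate implication is (i) $\Rightarrow$ (ii), where one must pass from the information $\langle x_n^*, x \rangle \to 1$ (which a priori says nothing about $x_n^*$ attaining its norm, let alone at a point near $x$) to the geometric hypothesis (i), phrased in terms of the domain variable of $D$. The remedy is the Bishop-Phelps-Bollob\'as theorem: given $x_n^* \in \mathbb{S}_{X^*}$ with $\langle x_n^*, x \rangle \to 1$, it produces $\tilde{x}_n \in \mathbb{S}_X$ and $\tilde{x}_n^* \in D(\tilde{x}_n)$ with $\|\tilde{x}_n - x\|_X \to 0$ and $\|\tilde{x}_n^* - x_n^*\|_{X^*} \to 0$. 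Norm-to-norm upper semicontinuity of $D$ at $x$ then forces $\tilde{x}_n^*$ into every norm neighbourhood of $D(x)$, and norm compactness of $D(x)$ supplies a subsequence $\tilde{x}_{n_k}^*$ converging in norm to some element of $D(x)$; since $\|\tilde{x}_n^* - x_n^*\|_{X^*} \to 0$, the same subsequence of $x_n^*$ has the same limit.

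The implication (ii) $\Rightarrow$ (iii) is a short subsequence argument. Given $x^* \in D(x)$ and $x_n^* \in \mathbb{S}_{X^*}$ converging weakly$^*$ to $x^*$, one has $\langle x_n^*, x \rangle \to \langle x^*, x \rangle = 1$, so (ii) yields that every subsequence of $x_n^*$ admits a further subsequence converging in norm to some $y^* \in D(x)$. Uniqueness of weak$^*$ limits forces $y^* = x^*$, and the sub-subsequence principle upgrades this to $x_n^* \to x^*$ in norm, which is exactly the coincidence of the two topologies at $x^*$.

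For (iii) $\Rightarrow$ (i), the set $D(x) = \{x^* \in \mathbb{B}_{X^*} \colon \langle x^*, x \rangle = 1\}$ is weak$^*$-closed and hence weak$^*$-compact by Banach-Alaoglu; combined with (iii) this shows $D(x)$ is norm compact. If upper semicontinuity failed at $x$, one would find a norm-open $V \supset D(x)$, a sequence $y_n \to x$ in $X$ and $y_n^* \in D(y_n) \setminus V$. The identity $\langle y_n^*, x \rangle = 1 + \langle y_n^*, x - y_n \rangle$ gives $\langle y_n^*, x \rangle \to 1$; extracting a weak$^*$ cluster point $y^*$ of $\{y_n^*\}$ (through a subsequence under the standing separability of $X^*$, or else through a subnet), weak$^*$ lower semicontinuity of the norm together with $\langle y^*, x \rangle = 1$ yields $y^* \in D(x) \subset V$. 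Hypothesis (iii) then upgrades the weak$^*$ convergence of this subnet to norm convergence, so a tail sits inside $V$, contradicting $y_n^* \notin V$. The principal obstacle throughout is the Bishop-Phelps-Bollob\'as step in (i) $\Rightarrow$ (ii): without a quantitative norm-attainment perturbation one has no way to transport the hypothesis $\langle x_n^*, x \rangle \to 1$ into the shape required by the upper semicontinuity of $D$.
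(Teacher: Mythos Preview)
The paper does not prove this theorem: it is quoted from the literature with the citations \cite{GGS78, HL92} and no argument is given. So there is no ``paper's proof'' to compare against; your task was really to supply a proof for a result the paper only cites.

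Your argument is essentially correct and follows the standard route. The use of the Bishop--Phelps--Bollob\'as theorem in (i) $\Rightarrow$ (ii) is exactly the right tool: it converts the hypothesis $\langle x_n^*, x\rangle \to 1$ into pairs $(\tilde x_n, \tilde x_n^*)$ with $\tilde x_n^* \in D(\tilde x_n)$ and $\tilde x_n \to x$, which is precisely the input that norm-to-norm upper semicontinuity of $D$ can act on. The compactness step (choosing $z_n^* \in D(x)$ with $\|\tilde x_n^* - z_n^*\| \to 0$ and then extracting a convergent subsequence of $(z_n^*)$) is clean. The implications (ii) $\Rightarrow$ (iii) and (iii) $\Rightarrow$ (i) are handled correctly.

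One small point worth making explicit: your proof of (ii) $\Rightarrow$ (iii) is sequential, while (iii) is a statement about topologies. In the general Banach space setting this is a genuine gap, since the weak$^*$ topology on $\mathbb{S}_{X^*}$ need not be first countable. You acknowledge this issue in (iii) $\Rightarrow$ (i) by allowing subnets, but the same care is needed in (ii) $\Rightarrow$ (iii). Under the paper's standing hypothesis that $X^*$ is separable, $X$ is separable and the weak$^*$ topology on $\mathbb{B}_{X^*}$ is metrizable, so sequences suffice and the issue disappears; it would be cleanest to state this explicitly.
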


The norm-to-norm upper semicontinuity of $D$ can also be used to characterise Fr\'{e}chet differentiability of the norm (see ~\cite[Corollary 7.16]{FHHMZ}).

\begin{theorem}
The norm $\norm{\cdot}_X$ is Fr\'{e}chet differentiable at $x \in \mathbb{S}_X$ if and only if $D(x)$ is a singleton and $D$ is norm-to-norm upper semicontinuous at $x$.
\end{theorem}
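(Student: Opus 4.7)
The plan is to derive both directions from Theorem \ref{t:GGS} combined with a sandwich estimate for $\norm{x+h}$ that converts upper semicontinuity of $D$ into a quantitative rate.

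For the forward implication, suppose $\norm{\cdot}_X$ is Fr\'echet differentiable at $x$. G\^ateaux differentiability already forces $D(x) = \{g'_x\}$ to be a singleton, so I would verify condition (ii) of Theorem \ref{t:GGS} directly from the quantitative expansion. Given $\epsilon > 0$, pick $\delta > 0$ so that $\abs{\norm{x+h}-1-\langle g'_x,h\rangle} \le \epsilon \norm{h}/2$ for $\norm{h} < \delta$. For any sequence $x_n^* \in \mathbb{S}_{X^*}$ with $\langle x_n^*, x\rangle \to 1$ and any $h$ with $\norm{h} < \delta$, the bound $\norm{x+h} \ge \langle x_n^*, x+h\rangle$ combined with the Fr\'echet expansion gives
\[
\langle x_n^* - g'_x, h\rangle \,\le\, (1 - \langle x_n^*, x\rangle) + \tfrac{\epsilon}{2}\norm{h}.
\]
Taking the supremum over $\norm{h} \le \delta$ and letting $n \to \infty$ yields $\norm{x_n^* - g'_x} \le \epsilon$, so the whole sequence converges in norm to $g'_x$. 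This verifies condition (ii) of Theorem \ref{t:GGS}, hence the equivalent condition (i), which is precisely norm-to-norm upper semicontinuity of $D$ at $x$ together with norm compactness of $D(x)$ (trivial for a singleton).

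For the converse, I would argue by contradiction. Assume $D(x) = \{x^*\}$ and $D$ is norm-to-norm upper semicontinuous at $x$, but $\norm{\cdot}_X$ fails Fr\'echet differentiability at $x$. Then there exist $\epsilon > 0$ and $h_n \in X$ with $\norm{h_n} \to 0$ and $\abs{\norm{x+h_n}-1-\langle x^*, h_n\rangle} \ge \epsilon \norm{h_n}$. For $n$ large, set $y_n \defeq (x+h_n)/\norm{x+h_n} \in \mathbb{S}_X$, pick $y_n^* \in D(y_n)$ (nonempty by Hahn-Banach), and observe $y_n \to x$ in norm. Upper semicontinuity of $D$ at $x$ with $D(x) = \{x^*\}$ then forces $\norm{y_n^* - x^*} \to 0$. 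From $\norm{x+h_n} \ge \langle x^*, x+h_n\rangle$ and $\norm{x+h_n} = \langle y_n^*, x+h_n\rangle \le 1 + \langle y_n^*, h_n\rangle$ one obtains the sandwich
\[
\langle x^*, h_n\rangle \,\le\, \norm{x+h_n} - 1 \,\le\, \langle y_n^*, h_n\rangle,
\]
whence $0 \le \norm{x+h_n} - 1 - \langle x^*, h_n\rangle \le \langle y_n^* - x^*, h_n\rangle \le \norm{y_n^* - x^*}\,\norm{h_n}=o(\norm{h_n})$, contradicting the lower bound $\epsilon\norm{h_n}$.

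The main obstacle is the backward direction, where the purely qualitative upper semicontinuity of the set-valued map $D$ must be turned into the quantitative decay $\norm{y_n^* - x^*} = o(1)$ needed to beat the $\epsilon \norm{h_n}$ lower bound; the singleton assumption on $D(x)$ is essential, since it ensures every selection from $D(y_n)$ converges to the same point $x^*$ and thus makes the sandwich estimate effective.
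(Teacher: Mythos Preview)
Your argument is correct. The forward direction is a clean application of the Fr\'echet expansion to produce the uniform rate required by condition (ii) of Theorem~\ref{t:GGS}; the minor phrasing ``letting $n\to\infty$ yields $\norm{x_n^*-g'_x}\le\epsilon$'' should be read as ``for all sufficiently large $n$'', which is what your estimate $\delta\norm{x_n^*-g'_x}\le (1-\langle x_n^*,x\rangle)+\epsilon\delta/2$ actually gives. The converse is also fine: the singleton hypothesis on $D(x)$ guarantees G\^ateaux differentiability (so the candidate derivative is $x^*$ and the error $\norm{x+h}-1-\langle x^*,h\rangle$ is nonnegative), and the sandwich $\langle x^*,h_n\rangle\le\norm{x+h_n}-1\le\langle y_n^*,h_n\rangle$ together with $\norm{y_n^*-x^*}\to 0$ (which follows from norm-to-norm upper semicontinuity at a singleton value) closes the contradiction.

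As for comparison with the paper: there is nothing to compare. The paper does not prove this statement; it is quoted as a preliminary result from \cite[Corollary~7.16]{FHHMZ}. Your write-up therefore supplies a self-contained proof where the paper simply cites one.
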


\begin{definition}
Let $Y$ and $Z$ be Hausdorff topological spaces. A set-valued map $F \colon Y \to 2^Z \setminus \{\emptyset\}$ is said to be \emph{cusco} (convex upper semicontinuous nonempty compact-valued) if it is norm-to-norm upper semicontinuous and $F(y)$ is compact for every $y \in Y$.
\end{definition}

\subsection{Extreme points of the unit ball of a separable dual}
When $C$ is a convex subset of a real Banach space $Y$, a point $y \in C$ is an \emph{extreme point} of $C$ if there exists no proper line segment that contains $y$ and lies in $C$.

\begin{definition}
A Banach space is said to have the \emph{Krein-Milman property} if every closed, bounded, convex set is the closed convex hull of its extreme points.
\end{definition}

We recall a result of C. Bessaga and A. Pe{\l}czynski (see e.g.~\cite[p. 198]{DU77}).

\begin{theorem} \label{Bessaga-Pelczynski theorem}
Every separable dual space has the Krein-Milman property.
\end{theorem}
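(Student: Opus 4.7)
My plan is to derive the Krein--Milman property for $X^*$ from the Radon--Nikodym property (RNP) via two classical reductions.

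\emph{Step 1: $X^*$ has the RNP.} Separability of $X^*$ forces separability of $X$ (a standard Hahn--Banach argument: pick a countable norm-dense subset of $\mathbb{S}_{X^*}$, let each functional almost-attain its norm at some $x_n \in \mathbb{S}_X$, and verify $\overline{\operatorname{span}}\{x_n\}=X$). Because the dual of any subspace of $X$ is a quotient of $X^*$, every separable subspace of $X$ has separable dual, so $X$ is an Asplund space. Stegall's theorem then delivers the RNP for $X^*$.

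\emph{Step 2: RNP implies KMP.} Let $C \subset X^*$ be nonempty, closed, bounded and convex, and set $D := \overline{\operatorname{co}}^{\|\cdot\|}(\operatorname{ext}(C))$. Supposing for contradiction that $D \subsetneq C$, I would pick $y_0 \in C \setminus D$ and apply Hahn--Banach in the duality $(X,X^*)$ to produce $x \in X$ and $\alpha \in \R$ for which the slice $S := \{y \in C : \langle x, y \rangle > \alpha\}$ is a nonempty closed bounded convex subset of $C$ disjoint from $D$. The key geometric consequence of RNP is that \emph{every} slice of a closed bounded convex set contains a \emph{denting point} of the whole set, i.e.\ a point admitting slices of arbitrarily small diameter. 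Denting points are extreme: if $p = \tfrac{1}{2}(u+v)$ with $u,v \in C$ and $u \ne v$, then choosing a slice $S_\varepsilon \ni p$ of diameter $< \|u-v\|/2$ and using $\langle x_\varepsilon, u\rangle + \langle x_\varepsilon, v\rangle = 2\langle x_\varepsilon, p\rangle > 2\alpha_\varepsilon$ forces both $u, v \in S_\varepsilon$, which contradicts $\|u-v\| = 2\|u-p\| > \operatorname{diam}(S_\varepsilon)$. Thus $S$ contains an extreme point of $C$, contradicting $S \cap D = \emptyset$.

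The hardest ingredient is the existence of a denting point in every slice underlying Step~2. I would prove it by building a bounded $X^*$-valued dyadic martingale adapted to repeated bisections of $C$: were the slice diameters of the constituent pieces to fail to shrink along some branch, the martingale would be bounded in $L^\infty$ but would admit no strongly Bochner-integrable limit, contradicting the RNP secured in Step~1. This martingale construction—essentially the Huff--Davis--Phelps reformulation of RNP in terms of dentability—is the geometric heart of the proof and the step I expect to require the most care.
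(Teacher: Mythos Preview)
The paper does not prove this theorem; it is quoted as a classical result of Bessaga and Pe{\l}czy\'nski with a pointer to Diestel--Uhl, \emph{Vector Measures}, p.~198. Your route through the Radon--Nikod\'ym property is precisely the standard argument recorded there (separable dual $\Rightarrow$ RNP $\Rightarrow$ KMP), so you are reconstructing the cited proof rather than diverging from anything the paper does.

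Two minor slips are worth fixing. In Step~2 you separate $y_0$ from the norm-closed convex set $D$ ``in the duality $(X,X^*)$'' and produce $x\in X$; but Hahn--Banach only guarantees a functional in $(X^*)^*=X^{**}$, since $D$ need not be weak$^*$-closed. This is harmless because the dentability characterisation of RNP uses $X^{**}$-slices anyway. Second, in your extreme-point argument the inequality $\langle x_\varepsilon,u\rangle+\langle x_\varepsilon,v\rangle>2\alpha_\varepsilon$ only forces \emph{one} of $u,v$ into $S_\varepsilon$, not both; but one is enough, since then $\|u-p\|\le\operatorname{diam}(S_\varepsilon)<\|u-v\|/2=\|u-p\|$ already gives the contradiction. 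Neither point threatens the overall scheme.
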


Recall that in Assumptions \ref{Assumption on the bilinear operator}--\ref{Assumption on the bilinear operator 2}, the dual space $X^*$ is separable. The real-variable Hardy space $\mathcal{H}^1(\R^n) = (\operatorname{CMO}(\R^n))^*$ satisfies this criterion (see ~\cite[Proposition 2.15]{Lin15}). We also recall Milman's theorem on extreme points which is formulated as follows (see~\cite[Theorem 3.66]{FHHMZ}).

\begin{theorem} \label{Milman's theorem}
Let $K$ be a non-empty subset of a Hausdorff locally convex space such that $\overline{\operatorname{co}}(K)$ is compact. Then every extreme point of $\overline{\operatorname{co}}(K)$ lies in $\overline{K}$.
\end{theorem}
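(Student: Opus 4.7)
My approach is to argue by contradiction: assume $e$ is an extreme point of $\overline{\operatorname{co}}(K)$ but $e \notin \overline{K}$. Since $\overline{K}$ is closed and the ambient space is Hausdorff locally convex, a convex balanced open neighbourhood $V$ of the origin separates $e$ from $\overline{K}$, in the sense that $(e + V) \cap \overline{K} = \emptyset$. By regularity of Hausdorff locally convex spaces I can shrink once more to a convex balanced open $V_0$ with $\overline{V_0} \subset V$; this slack is what I will spend at the end.

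The next move is to exploit the compactness of $\overline{K}$ (a closed subset of the compact set $\overline{\operatorname{co}}(K)$) to extract from $\{x + V_0 : x \in \overline{K}\}$ a finite subcover $x_1 + V_0, \ldots, x_n + V_0$ with $x_i \in \overline{K}$. Form the convex compacta
\[
C_i \defeq \overline{\operatorname{co}}\bigl(\overline{K} \cap (x_i + \overline{V_0})\bigr) \subset x_i + \overline{V_0} \subset x_i + V,
\]
where the first inclusion uses that $x_i + \overline{V_0}$ is closed and convex. Since the sets $\overline{K} \cap (x_i + \overline{V_0})$ cover $\overline{K}$, I obtain $\overline{\operatorname{co}}(K) = \overline{\operatorname{co}}\bigl(\bigcup_i C_i\bigr)$. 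The decisive classical input is that the convex hull of a finite union of convex compact sets in a Hausdorff locally convex space is itself compact, being the continuous image of the compact set $\Delta_n \times C_1 \times \cdots \times C_n$ under the simplex-weighted average map (here $\Delta_n$ is the standard simplex). Hence the outer closure is redundant: $\overline{\operatorname{co}}(K) = \operatorname{co}\bigl(\bigcup_i C_i\bigr)$.

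Now I can close the argument. Writing $e = \sum_i \lambda_i c_i$ with $c_i \in C_i$, $\lambda_i \ge 0$, $\sum_i \lambda_i = 1$, extremality of $e$ forces $e = c_i$ for some index $i$ with $\lambda_i = 1$; thus $e \in C_i \subset x_i + V$, which by balancedness of $V$ yields $x_i \in e + V$. But $x_i \in \overline{K}$, so $x_i \in (e+V) \cap \overline{K}$, contradicting the choice of $V$.

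The only step that needs genuine care is the compactness of $\operatorname{co}\bigl(\bigcup_i C_i\bigr)$ in a general Hausdorff locally convex space; everything else is a routine choice of nested neighbourhoods of zero and the separation of $e$ from the closed set $\overline{K}$. I do not foresee any obstacle beyond this, but it is worth noting that local convexity is used exactly twice—once to produce $V$ and $V_0$, and once to run the simplex-image argument that makes $\operatorname{co}\bigl(\bigcup_i C_i\bigr)$ closed.
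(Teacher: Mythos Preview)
The paper does not give its own proof of this statement; it is quoted as a classical result with a reference to the textbook of Fabian \emph{et al.} Your argument is the standard proof of Milman's partial converse to the Krein--Milman theorem and is correct in all essentials: the separation of $e$ from $\overline{K}$ by a convex balanced neighbourhood, the finite cover of the compact set $\overline{K}$, the formation of the compacta $C_i$, and the key fact that $\operatorname{co}\bigl(\bigcup_i C_i\bigr)$ is already closed because it is the continuous image of $\Delta_n \times \prod_i C_i$.

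One small phrasing issue: extremality of $e$ does not literally force $\lambda_i = 1$ in the representation $e = \sum_i \lambda_i c_i$; what it forces is $e = c_j$ for \emph{every} index $j$ with $\lambda_j > 0$ (split off the $j$-th term and apply the two-point definition). Since at least one such $j$ exists, you still obtain $e \in C_j \subset x_j + V$ and the contradiction goes through unchanged.
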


\subsection{Self-adjoint variants of non-self-adjoint operators} \label{Self-adjoint variants of non-self-adjoint operators}
When Assumption \ref{Assumption on the bilinear operator} holds but Assumption \ref{Assumption on the bilinear operator 2} does not, we use a natural self-adjoint modification of $T_b$. Its existence and uniqueness are guaranteed by the following simple lemma.

\begin{lemma} \label{l:Lemma on the self-adjoint modification}
Suppose $A \colon H \to H$ is $\mathbb{K}$-linear. Then there exists a unique self-adjoint $\mathbb{K}$-linear operator $B \colon H \times H \to H \times H$ such that
\[\langle B(x,y), (x,y) \rangle_{H \times H} = 2 \mathbf{Re} \langle A x, y \rangle_H \quad \text{for all } x,y \in H.\]
The operator $B$ is of the form $B(x,y) = (A^* y, A x)$ and satisfies $\norm{B}_{H \times H \to H \times H} = \norm{A}_{H \to H}$.
\end{lemma}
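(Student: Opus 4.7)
The plan is to prove existence by an explicit formula, uniqueness by a polarisation argument, and the norm identity by a dual pair of inequalities.

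First, I would take $B(x,y) \defeq (A^* y, A x)$ as the candidate and verify the two required properties by direct computation. For the quadratic identity, expand
\[\langle B(x,y),(x,y)\rangle_{H\times H} = \langle A^* y, x\rangle_H + \langle A x, y\rangle_H = \overline{\langle Ax,y\rangle_H} + \langle Ax,y\rangle_H = 2\Re \langle Ax,y\rangle_H,\]
using the conjugate symmetry of the Hilbert inner product together with the defining property $\langle A^* y, x \rangle_H = \overline{\langle Ax, y\rangle_H}$. For self-adjointness, a parallel computation shows $\langle B(x_1,y_1),(x_2,y_2)\rangle = \langle A x_1, y_2\rangle + \langle A^* y_1, x_2\rangle = \langle (x_1,y_1), B(x_2,y_2)\rangle$.

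Next I would address uniqueness. Suppose $\tilde B$ is another self-adjoint $\mathbb{K}$-linear operator on $H\times H$ with $\langle \tilde B(x,y),(x,y)\rangle = 2\Re\langle Ax,y\rangle$. Set $C \defeq B - \tilde B$; then $C$ is self-adjoint and its quadratic form vanishes on all of $H \times H$. If $\mathbb{K} = \mathbb{C}$, polarisation gives $\langle Cz, w\rangle = 0$ for all $z,w$, hence $C = 0$. If $\mathbb{K} = \mathbb{R}$, the identity $\langle C(z+w), z+w\rangle - \langle C(z-w), z-w\rangle = 4\langle Cz, w\rangle$ together with the self-adjointness of $C$ again forces $C = 0$. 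This yields uniqueness.

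Finally, for the norm identity $\|B\| = \|A\|$, I would argue both directions. For the upper bound, use $\|A^*\|_{H\to H} = \|A\|_{H\to H}$ to estimate
\[\|B(x,y)\|_{H\times H}^2 = \|A^* y\|_H^2 + \|Ax\|_H^2 \le \|A\|_{H\to H}^2 \bigl(\|x\|_H^2 + \|y\|_H^2\bigr).\]
For the matching lower bound, exploit the self-adjointness and the quadratic identity: given $x \in H$ with $\|x\|_H = 1/\sqrt{2}$ and $Ax \ne 0$, choose $y \defeq c\, Ax/\|Ax\|_H$ with $c \in \mathbb{S}_{\mathbb{K}}$ such that $c\,\langle Ax, Ax\rangle_H \in \mathbb{R}_{\ge 0}$, scaled so that $\|y\|_H = 1/\sqrt{2}$. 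Then $\|(x,y)\|_{H\times H} = 1$ and $2\Re\langle Ax,y\rangle_H = \sqrt{2}\,\|Ax\|_H$. Taking the supremum over $x$ with $\|x\|_H = 1/\sqrt{2}$ gives $\|B\|_{H\times H\to H\times H} \ge \|A\|_{H\to H}$, via Assumption~\ref{Assumption on the bilinear operator 2}(ii) applied to $B$ (which we have just verified to be self-adjoint), or equivalently via the standard formula $\|B\| = \sup_{\|z\|=1}|\langle Bz,z\rangle|$ for self-adjoint operators.

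I do not expect a substantive obstacle here; the only points requiring care are fixing a sign convention for the inner product (linear in the first slot) so that $\langle A^* y, x\rangle = \overline{\langle Ax,y\rangle}$ is used correctly, and handling the real scalar case in the uniqueness step where $\mathbb{C}$-polarisation is unavailable and self-adjointness of $C$ is needed.
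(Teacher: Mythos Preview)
Your proof is correct; the paper itself omits the proof entirely, calling the lemma ``simple'' in the preceding sentence, so there is nothing to compare against. Two cosmetic remarks: the phase $c \in \mathbb{S}_{\mathbb{K}}$ in your lower-bound step is superfluous since $\langle Ax, Ax\rangle_H = \|Ax\|_H^2$ is already nonnegative, and the appeal to Assumption~\ref{Assumption on the bilinear operator 2}(ii) is slightly misplaced (that assumption concerns the family $T_b$, not a fixed operator $B$), but your parenthetical invocation of the standard identity $\|B\| = \sup_{\|z\|=1}|\langle Bz,z\rangle|$ for self-adjoint operators is the right justification.
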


\begin{proposition} \label{p:Corollary on modified operator}
If a bilinear mapping $T \colon X^{**} \times H \to H$ satisfies Assumption \ref{Assumption on the bilinear operator}, then the modified operator
\[(b,f,g) \mapsto \widetilde{T_b}(f,g) \colon X^{**} \times (H \times H) \to H \times H, \qquad \widetilde{T_b}(f,g) \defeq (T_b^* g, T_b f)\]
satisfies Assumptions \ref{Assumption on the bilinear operator}--\ref{Assumption on the bilinear operator 2}.

If $T$ satisfies Assumptions \ref{Assumption on the bilinear operator}--\ref{Assumption on the bilinear operator 2}, then $\tilde{T}$ is related to the G\^{a}teaux derivative of $Q$ by
\begin{equation} \label{Gateaux derivative formula}
\langle \tilde{T}_b(f,g),(f,g) \rangle_{H \times H} = \langle b, Q'_f g \rangle_{X^{**}-X^*} \qquad \text{for all } f,g \in H, \, b \in X^{**}.
\end{equation}
\end{proposition}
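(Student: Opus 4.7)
The first part reduces almost entirely to Lemma \ref{l:Lemma on the self-adjoint modification}. Applied with $A = T_b$, it yields the formula $\widetilde{T_b}(f,g) = (T_b^* g, T_b f)$ together with the norm identity $\|\widetilde{T_b}\|_{H \times H \to H \times H} = \|T_b\|_{H \to H}$; Assumption \ref{Assumption on the bilinear operator}(i) for $\widetilde{T_b}$ is thereby inherited from the one for $T_b$ with the same constants $c, C$, and Assumption \ref{Assumption on the bilinear operator 2}(i) holds by construction. Compactness of $\widetilde{T_b}$ for $b \in X$ (Assumption \ref{Assumption on the bilinear operator}(ii)) is then immediate, since $T_b^*$ is compact whenever $T_b$ is by Schauder's theorem, and $\widetilde{T_b}$ is a composition of the coordinate swap with the direct sum of the two compact operators $T_b$ and $T_b^*$.

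The one genuinely nontrivial point is Assumption \ref{Assumption on the bilinear operator 2}(ii) for $\widetilde{T_b}$, which asserts that the supremum of $\langle \widetilde{T_b} v, v\rangle$ over $v \in \SSS_{H \times H}$ equals $\|\widetilde{T_b}\|$ \emph{without} absolute values. Using Lemma \ref{l:Lemma on the self-adjoint modification} the plan is to rewrite $\langle \widetilde{T_b}(f,g),(f,g)\rangle = 2\,\mathbf{Re}\langle T_b f, g\rangle_H$ and then saturate this quantity: pick $f_0 \in \SSS_H$ nearly maximizing $\|T_b f\|_H$, set $g_0$ proportional to $T_b f_0$ with phase chosen so that $\langle T_b f_0, g_0\rangle_H \ge 0$, and rescale to $\|f_0\|^2 = \|g_0\|^2 = 1/2$. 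The AM--GM inequality $2ab \le a^2 + b^2$ produces a value arbitrarily close to $\|T_b\|_{H \to H}$, which combined with the trivial upper bound $|\langle \widetilde{T_b} v, v\rangle| \le \|\widetilde{T_b}\|$ finishes the identification. This phase/rescaling bookkeeping is the main (still mild) obstacle I anticipate.

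For the second part, under Assumptions \ref{Assumption on the bilinear operator}--\ref{Assumption on the bilinear operator 2} for $T$ itself, my plan is a direct quadratic expansion: for arbitrary $b \in X^{**}$ and $f,g \in H$,
\[
\langle b, Q(f + \epsilon g)\rangle_{X^{**}-X^*} = \langle T_b(f+\epsilon g), f+\epsilon g\rangle_H = \langle T_b f, f\rangle_H + 2\epsilon\,\mathbf{Re}\langle T_b f, g\rangle_H + \epsilon^2 \langle T_b g, g\rangle_H,
\]
where self-adjointness of $T_b$ (Assumption \ref{Assumption on the bilinear operator 2}(i)) is what collapses the two cross terms into a single real part. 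The $\epsilon^2$-remainder is controlled in $X^*$ by $\|Q g\|_{X^*} \lesssim \|g\|_H^2$, so $Q$ is in fact Fr\'echet differentiable at every $f \in H$ with $\langle b, Q'_f g\rangle_{X^{**}-X^*} = 2\,\mathbf{Re}\langle T_b f, g\rangle_H$ for every $b \in X^{**}$. A second invocation of Lemma \ref{l:Lemma on the self-adjoint modification} rewrites the right-hand side as $\langle \widetilde{T_b}(f,g),(f,g)\rangle_{H \times H}$, which is exactly \eqref{Gateaux derivative formula}. No real obstacle here; the only care needed is tracking the real part in the complex-scalar case.
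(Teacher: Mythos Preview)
Your argument is correct and follows exactly the route the paper intends: the paper states Proposition \ref{p:Corollary on modified operator} immediately after Lemma \ref{l:Lemma on the self-adjoint modification} without giving a separate proof, leaving the verification as a routine consequence of that lemma, which is precisely what you carry out. Your handling of Assumption \ref{Assumption on the bilinear operator 2}(ii) via the explicit choice $g_0 \propto T_b f_0$ is the natural way to see that $\sup_{\|v\|=1}\langle \widetilde{T_b}v,v\rangle$ (no absolute value) reaches $\|T_b\|$; equivalently, the off-diagonal block form $\widetilde{T_b}=\begin{pmatrix}0&T_b^*\\T_b&0\end{pmatrix}$ has spectrum symmetric about $0$, so its top eigenvalue equals its norm.
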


We illustrate Proposition \ref{p:Corollary on modified operator} in Examples \ref{ex:Complex Jacobian}--\ref{ex:Operators in terms of Hilbert transforms}.

\section{Operators satisfying Assumptions \ref{Assumption on the bilinear operator}--\ref{Assumption on the bilinear operator 2}}
In this section we discuss two classes of operators which satisfy assumption \ref{Assumption on the bilinear operator}: commutators of Calder\'{o}n-Zygmund operators with $\operatorname{BMO}$ functions and paracommutators. Specific examples are then given in \textsection \ref{Examples}.

\subsection{Commutators of Calder\'{o}n--Zygmund operators and $\operatorname{BMO}$ functions} \label{ss:Commutators of Calderon-Zygmund operators}
Recall that
\[\textup{BMO}(\R^n) \defeq \left\{ b \in L^1_{loc}(\R^n) \colon \norm{b}_{\operatorname{BMO}} \defeq \sup_{Q} \Aint_Q \abs{b(x) - b_Q} \, dx < \infty \right\},\]
where the supremum is taken over cubes $Q \subset \R^n$ and $b_Q \defeq \aint_Q b(y) \, dy$. The closure of $C_c^\infty(\R^n)$ in $\textup{BMO}(\R^n)$ is called $\operatorname{CMO}(\R^n)$. By classical results by Fefferman and by Coifman and Weiss, respectively, we have the dualities $[\mathcal{H}^1(\R^n)]^* = \textup{BMO}(\R^n)$ and $[\operatorname{CMO}(\R^n)]^* = \mathscr{H}^1(\R^n)$.

Coifman \& al. showed in ~\cite{CRW76} that when $T$ is a Calder\'{o}n-Zygmund operator with a suitable smooth kernel $\Omega$, we have $\norm{[b,T]}_{L^2 \to L^2} \lesssim \norm{b}_{\operatorname{BMO}}$ for all $b \in \textup{BMO}(\R^n)$. Specifically, $\Omega$ was assumed to be homogeneous of degree zero with vanishing mean over $S^{n-1}$ and to satisfy $\abs{\Omega(x)-\Omega(y)} < \abs{x-y}$ for all $x,y \in S^{n-1}$. They also showed that if $[b,R_j] \colon L^2(\R^n) \to L^2(\R^n)$ is bounded for every $j \in \{1,\ldots,n\}$, then $b \in \operatorname{BMO}(\R^n)$ and $\norm{b}_{\operatorname{BMO}} \lesssim \max_{1 \le j \le n} \norm{[b,R_j]}_{L^2 \to L^2}$. Uchiyama~\cite{Uch78} and Janson~\cite{Jan78} showed independently that in order to obtain $b \in \tp{BMO}(\R^n)$, it suffices to show boundedness of $[b,T]$ in $L^2(\R^n)$ for only one of the kernels $\Omega \not \equiv 0$ in the result of ~\cite{CRW76}. Uchiyama also showed that $[b,T]$ is compact if and only is $b \in \operatorname{CMO}(\R^n)$. Thus $T_b f \defeq [b,T] f$ satisfies Assumption \ref{Assumption on the bilinear operator}.

The two-sided estimate \eqref{Two-sided estimate} has been extended in numerous ways (to multi-parameter and weighted spaces etc.); see e.g.~\cite{GLW20, Hyt21, Wick20} and the references contained therein. In the case of commutators with Calder\'on-Zygmund operators, Hyt\"onen recently proved the estimate $\norm{[b,T]}_{L^2 \to L^2} \simeq \norm{b}_{\operatorname{BMO}}$ under the very weak assumption that the kernel is "non-degenerate". We recall relevant definitions from~\cite{Hyt21}.

\begin{definition} \label{d:omega-kernels}
A measurable function $K \colon \R^n \times \R^n \to \R$ is called an \emph{$\omega$-Calder\'on-Zygmund kernel} if
$K(x,y) \leq c_K \abs{x-y}^{-n}$ whenever $x\neq y$ and
\[\abs{K(x,y)-K(x',y)} + \abs{K(y,x)-K(y,x')} \leq \frac{1}{\abs{x-y}^n} \omega \left( \frac{\abs{x-x'}}{\abs{x-y}} \right)\]
whenever $\abs{x-x'} < \abs{x-y}/2$, where $\omega \colon [0,1) \to [0,\infty)$ is increasing.
\end{definition}

\begin{definition} \label{d:Non-degenerate kernels}
A measurable function $K \colon \R^n \times \R^n \to \R$ is called a \emph{non-degenerate Calder\'on-Zygmund kernel} if $K$ satisfies at least one of the following two conditions:

\begin{enumerate}
\item $K$ is an $\omega$-Calder\'on-Zygmund kernel with $\omega(t) \to 0$ as $t \to 0$ and for every $y \in \R^n$ and $r > 0$, there exists $x \in \complement B(y,r)$ such that $\abs{K(x,y)} \leq cr^{-d}$.

\item $K$ is an homogeneous Calder\'on-Zygmund kernel with $\Omega \in L^1(S^{n-1}) \setminus \{0\}$.
\end{enumerate}
\end{definition}

For compactness of commutators of $\tp{CMO}$ functions and Calder\'{o}n-Zygmund operators on (possibly weighted) $L^2(\R^n)$ we refer to~\cite{CH15,CC13,GZ19,HL21,LL22,Uch78}. In particular, when an homogeneous kernel $\Omega \in L^1(S^{n-1}) \setminus \{0\}$ has vanishing mean and satisfies
\[\sup_{\zeta \in S^{n-1}} \int_{S^{n-1}} \abs{\Omega(\eta)} \left( \tp{ln} \frac{1}{\abs{\eta \cdot \zeta}} \right)^\theta \tp{d} \eta < \infty\]
for some $\theta > 2$, the corresponding commutator satisfies Assumption \ref{Assumption on the bilinear operator}~\cite{CH15,Hyt21}. Assumption \ref{Assumption on the bilinear operator} is verified for commutators with the Cauchy integral operator in ~\cite{LNWW17}.

In many natural instances, an operator $T_b$ satisfying Assumption \ref{Assumption on the bilinear operator} is not precisely a commutator but, for instance, the composition of a commutator with a Calder\'{o}n-Zygmund operator. A natural general framework for such operators is provided by \emph{paracommutators}.

\subsection{Paracommutators} \label{Paracommutators}
We briefly recall basic definitions and results from the theory of paracommutators and refer to ~\cite{JP88}. Paracommutators are operators $T_b(A)$ of the form
\begin{equation} \label{Form of paracommutators}
\widehat{T_b(A) \omega}(\xi) \defeq  (2\pi)^{-n} \int_{\R^n} \hat{b}(\xi-\eta) A(\xi,\eta) \hat{\omega}(\eta) \, d\eta, \quad \xi \in \R^n,
\end{equation}
where $A \colon \R^n \times \R^n \to \R$ and $\omega \colon \R^n \to \R$ are functions and $b \colon \R^n \to \R$ is called the \emph{symbol of $T_b(A)$}.

As the most basic example, $A(\xi,\eta) \equiv 1$ yields (under suitable integrability assumptions) the multiplication operator $T_b \omega = b\omega$. The commutators $T_b(A) = [b,R_j]$ arise via $A(\xi,\eta) = \xi_j/\abs{\xi} - \eta_j/\abs{\eta}$. More generally, if $T$ is a Calder\'on-Zygmund singular integral operator with Fourier symbol $m$ and $A(\xi,\eta) = m(\xi) - m(\eta)$, then $T_b(A) = [b,T]$. Several further examples are presented in~\cite[pp. 469-473]{JP88} and~\cite[pp. 513-519]{PW00}.

In ~\cite{JP88}, Janson and Peetre gave conditions on the function $A$ under which the boundedness of $T_b(A) \colon L^2(\R^n) \to L^2(\R^n)$ is equivalent to the condition $b \in \operatorname{BMO}(\R^n)$. In order to state the result we need some definitions. We do not motivate them here but refer to ~\cite{JP88} for more information.

\begin{definition}
Let $U,V \subset \R^n$. The space $M(U \times V)$ consists of functions $\varphi \in L^\infty(U \times V)$ that admit a representation
\begin{equation} \label{e:Representation}
\varphi(\xi,\eta) = \int_X \alpha(\xi,x) \beta(\eta,x) \, d\mu(x)
\end{equation}
for some $\sigma$-finite measure space $(X,\mu)$ and measurable functions $\alpha \colon U \times X \to \R$ and $\beta \colon V \times X \to \R$ which satisfy
\begin{equation} \label{Admissible representations of Schur multipliers}
\int_X \norm{\alpha(\cdot,x)}_{L^\infty(U)} \norm{\beta(\cdot,x)}_{L^\infty(V)} \, d\mu(x) < \infty.
\end{equation}
Furthermore, $M(U \times V)$ is a Banach algebra in the norm given by minimising the left hand side of \eqref{Admissible representations of Schur multipliers} over all representations \eqref{e:Representation}.
\end{definition}

For every $j \in \mathbb{Z}$ we denote $\Delta_j \defeq \{\xi \in \R^n \colon 2^j \le \abs{\xi} < 2^{j+1}\}$. We list some of the assumptions of ~\cite{JP88} and retain their numbering.

(A0) There exists $r > 1$ such that $A(r\xi,r\eta) = A(\xi,\eta)$ for all $\xi,\eta \in \R^n$.

(A1) $\norm{A}_{M(\Delta_j \times \Delta_k)} \le C$ for all $j,k \in \mathbb{Z}$.

(A2) There exist $A_1,A_2 \in M(\R^n \times \R^n)$ and $\delta > 0$ such that
\begin{align*}
& A(\xi,\eta) = A_1(\xi,\eta) \quad \text{when } \abs{\eta} < \delta \abs{\xi}, \\
& A(\xi,\eta) = A_2(\xi,\eta) \quad \text{when } \abs{\xi} < \delta \abs{\eta}.
\end{align*}

(A3($\alpha$)): There exist $\alpha, \delta > 0$ such that whenever $r < \delta \abs{\xi_0}$, we have
\[\norm{A}_{M(B(\xi_0,r),B(\xi_0,r))} \le C \left( \frac{r}{\abs{\xi_0}} \right)^\alpha.\]

(A5) For every $\xi_0 \neq 0$ there exist $\delta > 0$ and $\eta_0 \in \R^n$ such that $1/A(\xi,\eta) \in M(U \times V)$, where $U = \{\xi \colon \abs{\xi/\abs{\xi} - \xi_0/\abs{\xi_0}} < \delta \text{ and } \abs{\xi} > \abs{\xi_0}\}$ and $V = B(\eta_0, \delta \abs{\xi_0}))$.

\vspace{0.2cm}
We collect the results of~\cite{JP88} that are most relevant to this paper.
\begin{theorem} \label{Janson-Peetre theorem}
Suppose $A \colon \R^n \times \R^n \to \R$ satisfies (A0), (A1), (A2), (A3($\alpha$)) and A(5), where $\alpha > 0$. Then $T_b(A)$ satisfies Assumption \ref{Assumption on the bilinear operator}.
\end{theorem}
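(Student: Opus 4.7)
The plan is to follow the strategy of Janson--Peetre~\cite{JP88} and verify the two conditions of Assumption~\ref{Assumption on the bilinear operator} separately: the two-sided norm equivalence $\|T_b(A)\|_{L^2 \to L^2} \simeq \|b\|_{\operatorname{BMO}}$, and compactness for $b \in \operatorname{CMO}(\R^n)$. The first two ingredients mirror the $\operatorname{BMO}$-upper bound and Uchiyama/Janson-type lower bound for Riesz commutators, while the third exploits density of $C_c^\infty(\R^n)$ in $\operatorname{CMO}(\R^n)$.

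For the upper bound, I would perform a Littlewood--Paley paraproduct decomposition on both the symbol $b$ and the input $\omega$. Writing $\hat b = \sum_j \widehat{\Delta_j b}$ and $\hat \omega = \sum_k \widehat{\Delta_k \omega}$, the bilinear Fourier multiplier representation~\eqref{Form of paracommutators} splits $T_b(A)\omega$ into three regimes: $|\xi - \eta| \ll |\eta|$ (low--high), $|\xi - \eta| \gg |\eta|$ (high--low), and $|\xi - \eta| \simeq |\eta|$ (diagonal). In the first two regimes, hypothesis (A2) lets one replace $A$ on the relevant frequency annuli by symbols in $M(\R^n \times \R^n)$, which act as bounded Schur multipliers and reduce matters to classical paraproduct bounds. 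On the diagonal, (A0) (homogeneity) combined with the uniform Schur multiplier control (A1) and the decay (A3($\alpha$)) allows one to sum over dyadic scales with a geometric factor $2^{-\alpha|j-k|}$, yielding the desired bound $\|T_b(A)\omega\|_{L^2} \lesssim \|b\|_{\operatorname{BMO}} \|\omega\|_{L^2}$.

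For the lower bound, the key ingredient is the non-degeneracy condition (A5). Given $b \in \operatorname{BMO}$ of unit norm, choose a cube $Q_0$ on which $b$ has non-trivial oscillation. Using (A5), for a suitable frequency point $\xi_0$ adapted to $Q_0$ one obtains a multiplicative ``inverse'' of $A$ on a product of open sets $U \times V$. This lets one construct test functions $\omega, \gamma \in L^2(\R^n)$ with $\hat \omega$ supported in $V$ and $\hat \gamma$ supported in $U$, such that the bilinear pairing $\langle T_b(A) \omega, \gamma \rangle$ reproduces (up to multiplicative constants) a localised Fourier coefficient of $b$ detecting its oscillation on $Q_0$. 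A duality argument using the John--Nirenberg inequality then converts this into the estimate $\|b\|_{\operatorname{BMO}} \lesssim \|T_b(A)\|_{L^2 \to L^2}$. This step, which adapts the Uchiyama/Janson ideas for Riesz transforms to the general paracommutator setting, is the main obstacle and the place where the whole technical weight of (A5) is used.

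For compactness, the upper bound shows that $b \mapsto T_b(A)$ is a bounded linear map from $\operatorname{BMO}(\R^n)$ into $\mathscr{B}(L^2(\R^n))$; since the compact operators form a norm-closed subspace, it suffices to establish compactness for $b$ in the dense subclass $C_c^\infty(\R^n) \subset \operatorname{CMO}(\R^n)$. For such $b$, $\hat b$ is a Schwartz function, so one can approximate $T_b(A)$ by the operators $T_b^{j,k}(A)$ obtained by restricting $\hat b$ to the annulus $\Delta_j$ and $\hat \omega$ to $\Delta_k$. The uniform Schur multiplier bound (A1) controls each piece and the decay (A3($\alpha$)) together with Schwartz decay of $\hat b$ gives convergence of the sum $\sum_{j,k} T_b^{j,k}(A) \to T_b(A)$ in operator norm; each summand is a Fourier multiplier in $\omega$ composed with a compactly supported frequency-band convolution operator, and thus factors through a smoothing operator that is compact on $L^2(\R^n)$ (e.g.\ by Rellich--Kondrachov after localisation). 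Being a norm-limit of compact operators, $T_b(A)$ is then itself compact for all $b \in \operatorname{CMO}(\R^n)$, completing the verification of Assumption~\ref{Assumption on the bilinear operator}.
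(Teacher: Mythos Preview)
The paper does not give its own proof of this theorem: it is stated as a compilation of results from Janson--Peetre~\cite{JP88} (for the two-sided norm equivalence and the Schatten-class/compactness theory), with the sentence ``We collect the results of~\cite{JP88} that are most relevant to this paper'' preceding the statement. Your sketch of the upper and lower bounds is essentially the Janson--Peetre argument, so on those two points you are aligned with what the paper invokes.

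There is, however, a genuine gap in your compactness step. You write that each piece $T_b^{j,k}(A)$ ``is a Fourier multiplier in $\omega$ composed with a compactly supported frequency-band convolution operator, and thus factors through a smoothing operator that is compact on $L^2(\R^n)$ (e.g.\ by Rellich--Kondrachov after localisation)''. But a Fourier multiplier with symbol supported in a fixed annulus is \emph{not} compact on $L^2(\R^n)$: it is an orthogonal projection onto an infinite-dimensional subspace. Rellich--Kondrachov requires simultaneous localisation in space \emph{and} frequency, and restricting $\hat b$ and $\hat\omega$ to dyadic shells gives you only the latter. The fact that $b\in C_c^\infty(\R^n)$ has compact \emph{spatial} support does not translate, through the bilinear Fourier formula~\eqref{Form of paracommutators}, into spatial localisation of the individual frequency pieces in any obvious way. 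What Janson--Peetre actually prove is a Schatten-class characterisation (membership of $T_b(A)$ in $S_p$ in terms of Besov regularity of $b$), and for $b\in C_c^\infty(\R^n)$ this places $T_b(A)$ in every $S_p$, in particular among the compact operators; the density argument then finishes as you describe. If you want a direct argument avoiding the full Schatten machinery, you need to exhibit genuine two-sided localisation---for instance by showing each $T_b^{j,k}(A)$ is Hilbert--Schmidt via an explicit kernel estimate using the rapid decay of $\hat b$---rather than appealing to frequency cut-offs alone.
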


By a result of Peng, under the assumptions (A0), (A1), (A3($\alpha$)) and (A5), compactness of $T_b \colon L^2(\R^n) \to L^2(\R^n)$ conversely implies $b \in \tp{CMO}(\R^n)$~\cite{Pen88}. We also mention that under (A0)--(A5), Assumption \ref{Assumption on the bilinear operator 2} is not satisfied in general.
Indeed, denoting $\tilde{b}(x) \defeq b(-x)$ and abusing notation, $\langle T_b(A(\xi,\eta)) \omega, \gamma \rangle = \langle \omega, T_{\tilde{b}}(A(\eta,\xi)) \gamma \rangle$ for all $\omega,\gamma \in L^2(\R^n)$.

If the assumptions of Theorem \ref{Janson-Peetre theorem} and Assumption \ref{Assumption on the bilinear operator 2} hold, we may use Definition \ref{Definition of the quadratic quantity} to define a quadratic operator $Q \colon L^2(\R^n) \to \mathscr{H}^1(\R^n)$. Its form is
\begin{equation} \label{Form of compensated quantities}
Q_{\tilde{A}}(\omega,\gamma)(x) = \frac{1}{(2\pi)^{2n}} \int_{\R^n} \int_{\R^n} \hat{\omega}(\xi) \hat{\gamma}(\eta) \tilde{A}(\xi,\eta) e^{-i (\xi+\eta) \cdot x} \, d\xi \, d\eta,
\end{equation}
where $\tilde{A}(\eta,-\xi) = A(\xi,\eta)$.
Conversely, $Q_{\tilde{A}}$, defined via \eqref{Form of compensated quantities}, gives rise to a paracommutator $T_b(A)$ defined by \eqref{Form of paracommutators}. These and many other relations between $T_b(A)$ and $Q_{\tilde{A}}$ are explored in~\cite{PW00}.

\subsection{Specific operators} \label{Examples}
We list some concrete quadratic operators which arise via Assumption \ref{Assumption on the bilinear operator}; numerous further examples are presented in~\cite{JP88, PW00}.

\begin{example} \label{ex:Complex Jacobian}
Let $u \in \dot{W}^{1,2}(\C,\C)$. When $u_z \defeq 2^{-1} (\partial_x-i \partial_y)(u_1+iu_2)$ and $u_{\bar{z}} \defeq 2^{-1} (\partial_x+i \partial_y)(u_1+iu_2)$ are the Wirtinger derivatives of $u$ and $\mathcal{S} \colon L^2(\C,\C) \to L^2(\C,\C)$ is the Beurling transform, we have $\mathcal{S} u_{\bar{z}} = u_z$. Therefore, the Jacobian $J u = |u_z|^2 - |u_{\bar{z}}|^2$ corresponds to $Q \omega \defeq |\S \omega|^2 - |\omega|^2$ via the isometric isomorphism $u \mapsto u_{\bar{z}} \eqdef \omega \colon \dot{W}^{1,2}(\C,\C) \to \L2$ whose inverse is the Cauchy transform $\mathcal{C}$. (We identify elements of $\dot{W}^{1,2}(\C,\C)$ that differ by a constant and set $\norm{u}_{\dot{W}^{1,2}} \defeq \norm{u_{\bar{z}}}_{L^2} = \norm{D u}_{L^2}/2$.) The quadratic operator $Q$ arises via the formula $\langle b, Q \omega \rangle_{\operatorname{BMO}-\mathcal{H}^1} = \langle T_b \omega, \omega \rangle_{L^2}$, where $T_b \omega \defeq \overline{(S b - bS) \overline{S \omega}}$ satisfies Assumption \ref{Assumption on the bilinear operator} for $H = L^2(\C,\C)$ and $X = \tp{CMO}(\C)$ (see ~\cite{Lin15}).

It is instructive to also consider the planar Jacobian in real variable notation. Define
\begin{equation} \label{e:Tb}
T_b \defeq R_1 [b, R_2] - R_2 [b, R_1] \colon L^2(\R^2) \to L^2(\R^2).
\end{equation}
Now $T_b^* = - T_b \neq 0$ and thus $T_b$ is not self-adjoint. However, the operator $\widetilde{T_b} \colon L^2(\R^2,\R^2) \to L^2(\R^2,\R^2)$, defined in Proposition \ref{p:Corollary on modified operator}, generates the Jacobian:
\begin{align*}
\int_{\R^2} \widetilde{T_b}(\omega_1,\omega_2) \cdot (\omega_1,\omega_2)
&= 2 \langle b, R_1 \omega_1 R_2 \omega_2 - R_2 \omega_1 R_1 \omega_2 \rangle_{\operatorname{BMO}-\mathcal{H}^1} \\
&= 2 \langle b, \partial_1 v_1 \partial_2 v_2 - \partial_2 v_1 \partial_1 v_2 \rangle_{\text{BMO}-\mathcal{H}^1},
\end{align*}
where $v = \Lambda \omega \defeq (-\Delta)^{-1/2} \omega \in \dot{W}^{1,2}(\R^2)$. This example serves to further motivate Assumption \ref{Assumption on the bilinear operator 2}.
\end{example}

\begin{example} \label{ex:Operators in terms of Hilbert transforms}
On the real line, two quadratic $\mathcal{H}^1$-integrable quantities arise as follows. Isomorphically, $\mathcal{H}^p(\R) \cong \mathcal{H}^p(\R,\C) \defeq \{g + i Hg \colon g, Hg \in L^p(\R)\}$
for all $p \in (0,\infty)$, where $H$ is the Hilbert transform. Given $\omega \in L^2(\R)$ we set
\[(\omega + i H\omega)^2 = \omega^2 - (H\omega)^2 + 2 i \omega H \omega \eqdef Q_1 \omega + i Q_2 \omega.\]
As is well-known, $Q_1,Q_2 \colon L^2(\R) \to \mathcal{H}^1(\R)$ are not surjective but their G\^{a}teaux derivatives $(Q_1)_\omega' \gamma = \omega \gamma - H\omega H\gamma$ and $(Q_2)'_\omega \gamma = \omega H \gamma + \gamma H \omega$ are--in other words,
\begin{equation} \label{Product of Hardy spaces}
\{\alpha^2 \colon \alpha \in \mathcal{H}^2(\R,\C)\} \subsetneq \{\alpha \beta \colon \alpha, \beta \in \mathcal{H}^2(\R,\C)\} = \mathcal{H}^1(\R,\C).
\end{equation}
We give a proof of \eqref{Product of Hardy spaces} in Appendix \ref{s:Appendix} for the reader's convenience.

The operator $Q_1$ arises via the bounded linear operator $T_b \colon L^2(\R) \to L^2(\R)$, $T_b \defeq [H, b] H$ by the formula $\langle b, Q_1 \omega \rangle_{\operatorname{BMO}-\mathcal{H}^1} \defeq \int_{\R} \omega T_b \omega$. Now $T_b$ satisfies Assumptions \ref{Assumption on the bilinear operator}--\ref{Assumption on the bilinear operator 2} but $Q_1 \colon L^2(\R) \to \mathcal{H}^1(\R)$ is not surjective. This illustrates the fact that the modified operator $\tilde{T}_b$ (see Proposition \ref{p:Corollary on modified operator}) might be needed to ensure surjectivity even if $T_b$ satisfies Assumptions \ref{Assumption on the bilinear operator}--\ref{Assumption on the bilinear operator 2}.
\end{example}

\begin{example}
In ~\cite{Wu96}, Wu considered higher-dimensional variants of the operators $Q_j$ of Example \ref{ex:Operators in terms of Hilbert transforms} in Clifford algebras. When $\Omega = \omega + \sum_{j=1}^n R_j \omega \mathbf{e}_j \in L^2(\R^n,\C_{(n)})$ and $\Gamma = \gamma + \sum_{j=1}^n R_j \gamma \mathbf{e}_j \in L^2(\R^n,\C_{(n)})$, their product can be written as
\begin{equation} \label{e:Factorisation in upper half space}
\Omega \Gamma = \omega \gamma - \sum_{j=1}^n R_j \omega R_j \gamma + \sum_{j=1}^n (\omega R_j \gamma + \gamma R_j \omega) \mathbf{e}_j + \sum_{j < k} (R_j \omega R_k \gamma - R_k \omega R_j \gamma) \mathbf{e}_{jk}.
\end{equation}
Each component of $\Omega \Gamma$ then belongs to $\mathcal{H}^1(\R^n)$~\cite[Theorem 12.3.1]{Wu96}. Note that when $n=2$, the last term of \eqref{e:Factorisation in upper half space} is a Jacobian in disguise: $R_1 \omega R_2 \gamma - R_2 \omega R_1 \gamma = Jv$ for $v = (\Lambda^{-1} \omega,\Lambda^{-1} \gamma) \in \dot{W}^{1,2}(\R^2,\R^2)$. In analogy to \eqref{Product of Hardy spaces}, Wu conjectured that $\mathcal{H}^1(\R^n) = \{\omega \gamma - \sum_{j=1}^n R_j \omega R_j \gamma \colon \omega,\gamma \in L^2(\R^n)\}$~\cite[p. 237]{Wu96}.

Wu also studied more general combinations of Riesz transforms that arise via $A(\xi,\eta) = 1 - (\xi \cdot \eta)^m/(\abs{\xi} \abs{\eta})^m$, $m \in \N$, and showed that each such $A$ satisfies the assumptions of Theorem \ref{Janson-Peetre theorem}. As an example, $T_b \defeq [R, b] \cdot R \colon L^2(\R^n) \to L^2(\R^n)$ generates
\[Q f = \abs{\omega}^2 - \abs{R \omega}^2 = \abs{\Lambda u}^2 - \abs{\nabla u}^2,\]
where $\Lambda u = f$. In this case, $A(\xi,\eta) = 1 - \eta \cdot \xi/(\abs{\eta} \abs{\xi})$. Thus Wu's conjecture says that the G\^{a}teaux derivative $Q'_\omega \gamma = 2(\omega \gamma - R\omega \cdot R\gamma) = 2(\Lambda u \Lambda v - \nabla u \cdot \nabla v)$ gives a surjective bilinear operator $L^2(\R^n) \times L^2(\R^n) \to \mathcal{H}^1(\R^n)$.

The planar Monge-Amp\`ere equation arises as follows. Define $T_b \colon L^2(\R^2) \to L^2(\R^2)$ by
\[T_b \omega \defeq \frac{[R_{11}, b] R_{22} \omega + [R_{22}, b] R_{11} \omega - 2 [R_{12}, b] R_{12} \omega}{2}.\]
Then $Q \colon L^2(\R^2) \to \mathcal{H}^1(\R^2)$ is of the form $Q \omega = R_{11} \omega R_{22} \omega - R_{12} \omega R_{21} \omega$. Using the isomorphism $-\Delta \colon \dot{W}^{2,2}(\R^2) \to L^2(\R^2)$ we may write the Hessian as $\mathcal{H} = Q \circ (-\Delta)^{-1} \colon \dot{W}^{2,2}(\R^2) \to \mathcal{H}^1(\R^2)$. In this case, $A(\xi,\eta) = 1 - (\xi \cdot \eta)^2/(\abs{\xi}^2 \abs{\eta}^2)$.
\end{example}

\section{Banach space geometric considerations} \label{Banach space geometric preliminaries}
This subsection is devoted to proving various claims presented in \textsection \ref{ss:Overall strategy and aim}. We assume that $T$ satisfies Assumptions \ref{Assumption on the bilinear operator}--\ref{Assumption on the bilinear operator 2}.

\subsection{Minimum norm solutions}
We begin by noting that if the equation $Q\omega = f$ has a solution, then the direct method gives a minimum norm solution.

\begin{proposition} \label{p:Existence of minimum norm solutions}
Suppose $f \in X^*_Q$, $\gamma \in H$ and $Q \gamma = f$. Then there exists $\omega \in H$ with $Q \omega = f$ and $\norm{\omega}_H = \min_{Q \gamma = f} \norm{\gamma}_H$.
\end{proposition}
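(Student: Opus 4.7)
The plan is to apply the direct method of the calculus of variations. Let $m \defeq \inf\{\norm{\gamma'}_H : Q\gamma' = f\}$, which is finite since the set is non-empty by hypothesis, and pick a minimising sequence $(\omega_j) \subset H$ with $Q\omega_j = f$ and $\norm{\omega_j}_H \to m$.

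Since $(\omega_j)$ is bounded in the Hilbert space $H$, by reflexivity (or the Banach--Alaoglu theorem) I can extract a subsequence, still denoted $(\omega_j)$, that converges weakly in $H$ to some $\omega \in H$. The weak lower semicontinuity of the Hilbert space norm yields
\[\norm{\omega}_H \leq \liminf_{j \to \infty} \norm{\omega_j}_H = m.\]
It therefore only remains to verify the constraint $Q\omega = f$. This is exactly where the key structural property from Definition \ref{Definition of the quadratic quantity} comes in: the map $Q \colon H \to X^*$ is weak-to-weak$^*$ sequentially continuous, so $Q\omega_j \rightharpoonup^* Q\omega$ in $X^*$. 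Since $Q\omega_j = f$ for every $j$, the weak$^*$ limit must equal $f$, giving $Q\omega = f$.

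Consequently, $\omega$ is admissible in the minimisation problem, so $\norm{\omega}_H \geq m$, and combined with the previous inequality we obtain $\norm{\omega}_H = m$, i.e. $\omega$ is the sought minimum norm solution. The only non-trivial ingredient is the weak-to-weak$^*$ sequential continuity of $Q$, which is built into its definition, so I do not expect any real obstacle here; the whole argument is a standard direct-method compactness-plus-lower-semicontinuity computation.
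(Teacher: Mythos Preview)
Your proof is correct and follows essentially the same approach as the paper's: take a minimising sequence, pass to a weakly convergent subsequence using weak compactness of bounded sets in $H$, invoke the weak-to-weak$^*$ sequential continuity of $Q$ to preserve the constraint, and use weak lower semicontinuity of the norm to conclude. The paper's version is slightly more terse but the argument is identical.
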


\begin{proof}
Choose a minimising sequence so that $Q \omega_j = f$ for every $j \in \N$ and $\lim_{j \to \infty} \norm{\omega_j}_H = \inf_{Q \gamma = f} \norm{\gamma}_H$. Since $\mathbb{B}_H$ is sequentially weakly compact and $Q$ is weak-to-weak$^*$ continuous, we have $\omega_j \rightharpoonup \omega$ and $Q \omega_j \overset{*}{\rightharpoonup} Q \omega$ for a subsequence, and $\omega$ is the sought minimum norm solution.
\end{proof}

We then show that surjectivity of $Q \colon H \to X^*_Q$ would follow from the weak$^*$ density of the range $Q(H)$ in $X^*_Q$ combined with a suitable a priori estimate.

\begin{proposition} \label{p:Weak continuity proposition}
Suppose that $\overline{Q(H)}^{\operatorname{w}-*} = X^*_Q$ and that $\norm{\omega}_H^2 \lesssim \norm{Q \omega}_{X^*_Q}$ for every minimum norm solution. Then $Q(H) = X^*_Q$.
\end{proposition}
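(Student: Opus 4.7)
The strategy is to prove surjectivity by a weak compactness argument in the Hilbert space $H$: given $f \in X^*_Q$, extract a bounded sequence $(f_j) \subset Q(H)$ weakly$^*$ convergent to $f$, use the a priori estimate to bound the associated minimum norm solutions, and pass to a weak limit via the weak-to-weak$^*$ sequential continuity of $Q$.

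Granted such an approximating sequence, I would write $f_j = Q\omega_j$ with $\omega_j$ a minimum norm solution (Proposition \ref{p:Existence of minimum norm solutions}). The a priori estimate $\|\omega_j\|_H^2 \lesssim \|f_j\|_{X^*_Q}$ bounds $\{\omega_j\}$ in the reflexive Hilbert space $H$, so a subsequence $\omega_{j_k} \rightharpoonup \omega$ for some $\omega \in H$. By Definition \ref{Definition of the quadratic quantity}, $Q$ is weak-to-weak$^*$ sequentially continuous, hence $Q\omega_{j_k} \overset{*}{\rightharpoonup} Q\omega$. Combining with $Q\omega_{j_k} = f_{j_k} \overset{*}{\rightharpoonup} f$ forces $Q\omega = f$, so $f \in Q(H)$.

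To extract the bounded sequential weak$^*$ approximation, I would use the separability of $X^*$ (hence of $X$), which renders closed balls of $X^*_Q$ weak$^*$ metrizable. Together with the hypothesis $\overline{Q(H)}^{\operatorname{w}-*} = X^*_Q$, this should yield a sequence $f_j \in Q(H)$ with $f_j \overset{*}{\rightharpoonup} f$, and boundedness of $\{f_j\}$ is then automatic from the Banach--Steinhaus uniform boundedness principle. As a corollary, the same weak-limit argument of the previous paragraph proves that $Q(H)$ is norm closed in $X^*_Q$.

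The main obstacle is precisely this extraction of a sequential weak$^*$ approximation from the abstract weak$^*$ density hypothesis. Since $Q(H)$ is a cone rather than a convex set (indeed $Q(t\omega) = t^2 Q\omega$), Mazur's theorem cannot directly convert weak$^*$ density into norm or bounded sequential density. A resolution should exploit this cone structure together with the weak$^*$ metrizability of bounded balls in $X^*_Q$; alternatively, one may invoke the weak factorization $X^*_Q = \{\sum_j Q\omega_j : \sum_j \|\omega_j\|_H^2 < \infty\}$ noted in the introduction as a consequence of Assumptions \ref{Assumption on the bilinear operator}--\ref{Assumption on the bilinear operator 2}, approximate $f$ in norm by partial sums, and reduce each finite sum to an element of $Q(H)$ by a direct-sum Hilbert space construction.
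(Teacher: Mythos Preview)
Your core argument---choose minimum norm solutions $\omega_j$ with $Q\omega_j \overset{*}{\rightharpoonup} f$, bound $(\omega_j)$ in $H$ via the a priori estimate, extract a weak sublimit $\omega$, and conclude $Q\omega = f$ from the weak-to-weak$^*$ sequential continuity of $Q$---is exactly the paper's three-line proof.

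You go further than the paper in flagging the passage from weak$^*$ density to a weak$^*$-convergent \emph{sequence} as the delicate step; the paper simply writes ``choose $\omega_j$ with $Q\omega_j \overset{*}{\rightharpoonup} f$'' and then asserts $\|\omega_j\|_H^2 \lesssim \|f\|_{X^*}+1$ without further comment (the form of the bound suggests it is silently using a bounded, perhaps norm-convergent, approximation). Your proposed patches do not quite close this gap either: weak$^*$ metrizability of balls helps only once the approximants are already known to lie in a fixed ball, so invoking it before Banach--Steinhaus is circular; and the direct-sum idea fails because $\sum_{j=1}^N Q\omega_j$ is in general not of the form $Q\gamma$ for any $\gamma$ in the \emph{same} Hilbert space $H$. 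In the intended applications $Q(H)$ is in fact norm dense (via the weak factorisation), which trivially supplies the required sequence; as stated abstractly, both your proof and the paper's leave this technical point loose.
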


\begin{proof}
Let $f \in X^*$ and choose minimum norm solutions $\omega_j \in H$ with $Q \omega_j \overset{*}{\rightharpoonup} \omega$. For large enough $j \in \N$ we have $\norm{\omega_j}_H^2 \lesssim \norm{f}_{X^*} + 1$. After passing to a subsequence, a weak limit $\omega$ of $(\omega_j)_{j=1}^\infty$ satisfies $Q \omega = f$.
\end{proof}

We also mention a version of the Banach-Schauder open mapping theorem for multilinear (and more general) operators from~\cite{GKL20}. We say that $Q$ enjoys \emph{generalised translation invariance} if there exist isometric isomorphisms $\sigma_j \in H^H$ and $\sigma_j \in (X_Q^*)^{X_Q^*}$, $j \in \N$, such that $Q \circ \sigma_j = \sigma_j \circ Q$ for all $j \in \N$ and $\sigma_j f \overset{*}{\rightharpoonup} 0$ as $j \to \infty$ for all $f \in X_Q^*$. The example one should have in mind is $\sigma_j \omega(x) = \omega(x-je)$ and $\sigma_j f(x) \defeq f(x-je)$ for a non-zero vector $e$.

\begin{theorem} \label{t:GKL open mapping theorem}
Suppose $Q$ enjoys generalised translation invariance. Then one of the following claims holds:
\begin{enumerate}
\item $Q(H) = X_Q^*$ and every minimum norm solution satisfies $\norm{\omega}_H^2 \lesssim \norm{Q\omega}_{X_Q^*}$.

\item $Q(H)$ is meagre in $X_Q^*$.
\end{enumerate}
\end{theorem}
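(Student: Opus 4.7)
\smallskip
\noindent\textbf{Proof proposal.} My plan is a Baire--category dichotomy modelled on the classical Banach--Schauder open mapping theorem, with the linearity replaced by the quadratic homogeneity of $Q$ and the translation hypothesis. For each $n \in \N$ I set
\[
S_n \defeq Q\bigl( \{\omega \in H : \norm{\omega}_H \leq n\} \bigr) \subset X_Q^*,
\]
so that $Q(H) = \bigcup_{n=1}^\infty S_n$. The first step is to verify that each $S_n$ is norm-closed. If $f_k = Q \omega_k$ with $\norm{\omega_k}_H \leq n$ and $f_k \to f$ in $X_Q^*$, then by reflexivity of $H$ a subsequence satisfies $\omega_{k_j} \rightharpoonup \omega$ with $\norm{\omega}_H \leq n$; Definition \ref{Definition of the quadratic quantity} gives $Q \omega_{k_j} \overset{*}{\rightharpoonup} Q \omega$, which combined with norm convergence forces $f = Q \omega \in S_n$. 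If $Q(H)$ is meagre we are in case (2), so assume not; the Baire category theorem then yields some $n$, some $f_0 \in S_n$ and $r > 0$ with $B(f_0, r) \subset S_n$, and we write $f_0 = Q \omega_0$ with $\norm{\omega_0}_H \leq n$.

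The pivotal step is to translate this ball to the origin. Since $\sigma_j$ is an isometry on both $H$ and $X_Q^*$ and intertwines $Q$, I first observe that $B(\sigma_j f_0, r) \subset S_n$ for every $j$: if $\norm{h - \sigma_j f_0}_{X_Q^*} < r$, then $\sigma_j^{-1} h = Q \gamma$ with $\norm{\gamma}_H \leq n$, hence $h = Q(\sigma_j \gamma)$ with $\norm{\sigma_j \gamma}_H \leq n$. Given any $g \in X_Q^*$ with $\norm{g}_{X_Q^*} < r$, I consider the sequence $g_j \defeq \sigma_j f_0 + g$: since $\norm{g_j - \sigma_j f_0}_{X_Q^*} < r$, there exist $\gamma_j \in H$ with $Q \gamma_j = g_j$ and $\norm{\gamma_j}_H \leq n$. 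By hypothesis $\sigma_j f_0 \overset{*}{\rightharpoonup} 0$, so $g_j \overset{*}{\rightharpoonup} g$. Passing to a weakly convergent subsequence $\gamma_{j_k} \rightharpoonup \gamma$ with $\norm{\gamma}_H \leq n$, the weak-to-weak$^*$ continuity of $Q$ gives $g = Q \gamma$. Thus $B(0, r) \subset S_n$.

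Finally, I bootstrap using the quadratic homogeneity $Q(t \omega) = t^2 Q \omega$: for arbitrary $f \in X_Q^* \setminus \{0\}$, setting $t \defeq \sqrt{r / (2 \norm{f}_{X_Q^*})}$ gives $\norm{t^2 f}_{X_Q^*} = r/2$, hence $t^2 f = Q \eta$ with $\norm{\eta}_H \leq n$; then $Q(\eta / t) = f$ and
\[
\norm{\eta/t}_H^2 \leq n^2 / t^2 = (2n^2 / r) \norm{f}_{X_Q^*},
\]
so $f \in Q(H)$ and any minimum norm solution of $Q \omega = f$ satisfies $\norm{\omega}_H^2 \lesssim \norm{f}_{X_Q^*}$, giving case (1). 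The main conceptual obstacle is the translation step, since Baire category alone only yields an interior point somewhere in $X_Q^*$, and quadratic scaling is not enough to transport this interior to the origin; the generalised translation invariance hypothesis is engineered precisely so that the weak$^*$ null sequence $\sigma_j f_0$ lets one realise an arbitrary small $g$ as a weak$^*$ limit of elements of $S_n$, after which weak sequential compactness and the weak-to-weak$^*$ continuity of $Q$ close the argument. The rest is bookkeeping around the $t^2$ scaling.
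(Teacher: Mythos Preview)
The paper does not prove this theorem; it is quoted from \cite{GKL20} (Guerra, Koch and Lindberg) as a known result and then used as a tool. Your argument is correct and is precisely the Baire--category scheme one expects in that reference: closedness of $S_n$ via weak compactness of $\mathbb{B}_H$ and the weak-to-weak$^*$ sequential continuity of $Q$, Baire to produce an interior ball $B(f_0,r)\subset S_n$, the intertwining isometries $\sigma_j$ together with $\sigma_j f_0 \overset{*}{\rightharpoonup} 0$ to slide that ball to the origin, and finally the homogeneity $Q(t\omega)=t^2 Q\omega$ to scale. Each step is sound as written; in particular, your handling of the translation step is exactly why the hypothesis is formulated the way it is, and the passage from a bound on \emph{some} solution to a bound on the minimum norm solution is immediate.
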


Theorem \ref{t:GKL open mapping theorem} is primarily a practical tool for showing \emph{non}-surjectivity of $Q \colon H \to X^*_Q$; disproving the \emph{a priori} estimate $\norm{\omega}_H^2 \lesssim \norm{Q\omega}_{X_Q^*}$ is a markedly less daunting task than disproving surjectivity~\cite{GKL20}.

\subsection{Definition and existence of Lagrange multipliers}
When $\omega \in H$ is a minimum norm solution, it is natural to look for a Lagrange multiplier of $\omega$, as discussed in \textsection \ref{ss:Overall strategy and aim}. The Lagrange multiplier condition \eqref{Lagrange multiplier condition} can be written more concisely as follows: $b \in X^{**}_Q$ is a Lagrange multiplier of $\omega \in H$ if
\begin{equation} \label{Lagrange multiplier condition 2}
2 \mathbf{Re} \langle \omega, \varphi \rangle_H = \langle b, Q'_\omega \varphi \rangle_{X_Q^{**}-X_Q^*} \quad \text{ for every } \varphi \in H.
\end{equation}
Yet more consicely, $(Q_\omega')^* b = 2 \omega$. The existence of $b$ then means that $\omega \in \operatorname{ran}((Q_f')^*) = \ker(Q_f')^\perp$

The standard tool for showing the existence of a Lagrange multiplier in a Banach space is the Liusternik-Schnirelman theorem which, in this setting, says that if $Q_\omega'$ maps $H$ onto $X_Q^*$, then $\omega$ possesses a Lagrange multiplier. However, in the cases of interest to us, the Liusternik-Schnirelman theorem is not available, as shown belov.

\begin{proposition} \label{p:Liusternik-Schnirelman proposition}
Suppose $X^*_Q$ is not isomorphic to a Hilbert space. Then we have $\{Q'_\omega \gamma \colon \gamma \in H\} \subsetneq X^*_Q$ for every $\omega \in H$.
\end{proposition}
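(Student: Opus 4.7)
The plan is to prove the contrapositive: if $\{Q'_\omega \gamma \colon \gamma \in H\} = X^*_Q$ for some $\omega \in H$, then $X^*_Q$ must be isomorphic to a Hilbert space. The key point is that $Q'_\omega$ is a bounded linear operator from a Hilbert space onto $X^*_Q$, so the Banach--Schauder open mapping theorem will force $X^*_Q$ to be a topological quotient of a Hilbert space, hence itself isomorphic to a Hilbert space.

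First I would record the formula for the G\^ateaux derivative. Differentiating $t \mapsto \langle b, Q(\omega+t\gamma)\rangle_{X^{**}-X^*}$ at $t=0$ and using the self-adjointness of $T_b$ from Assumption \ref{Assumption on the bilinear operator 2}(i), I get
\[
\langle b, Q'_\omega \gamma\rangle_{X^{**}-X^*} = 2\,\mathbf{Re}\,\langle T_b\omega,\gamma\rangle_H \qquad (b \in X^{**},\ \gamma \in H).
\]
The norm bound $\norm{T_b}_{H\to H}\le C\norm{b}_{X^{**}}$ from Assumption \ref{Assumption on the bilinear operator}(i) then gives $\norm{Q'_\omega\gamma}_{X^*}\le 2C\norm{\omega}_H\norm{\gamma}_H$. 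Thus $Q'_\omega\colon H \to X^*_Q$ is a bounded $\R$-linear map (treating $H$ as a real Hilbert space with inner product $\mathbf{Re}\langle\cdot,\cdot\rangle_H$ in the case $\mathbb{K}=\C$, since $X$ and $X^*$ are real Banach spaces).

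Next, assume for contradiction that $Q'_\omega(H)=X^*_Q$. Since $H$ and $X^*_Q$ are Banach spaces and $Q'_\omega$ is a bounded linear surjection, the open mapping theorem produces a topological isomorphism
\[
\overline{Q'_\omega}\colon H/\ker(Q'_\omega) \xrightarrow{\sim} X^*_Q.
\]
Because $\ker(Q'_\omega)$ is a closed subspace of the (real) Hilbert space $H$, the quotient $H/\ker(Q'_\omega)$ is isometrically isomorphic to $\ker(Q'_\omega)^\perp$ and is therefore a Hilbert space. Composing the two isomorphisms yields $X^*_Q \simeq \ker(Q'_\omega)^\perp$, so $X^*_Q$ is isomorphic to a Hilbert space, contradicting the hypothesis.

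There is no genuine obstacle: once the bilinear structure is unpacked via the polarization formula, the statement is a one-line application of the open mapping theorem together with the elementary fact that closed-subspace quotients of Hilbert spaces are Hilbert spaces. The only minor subtlety is the bookkeeping of real versus complex linearity when $\mathbb{K}=\C$, which is handled by working with $H$ as a real Hilbert space throughout.
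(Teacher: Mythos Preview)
Your proof is correct and follows essentially the same route as the paper: assume $Q'_\omega$ is onto, then use that it restricts to an isomorphism from the Hilbert space $\ker(Q'_\omega)^\perp$ onto $X^*_Q$. The paper states this in one line without spelling out the boundedness of $Q'_\omega$, the open mapping theorem, or the real-versus-complex bookkeeping; your version fills in exactly those details.
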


\begin{proof}
Seeking a contradiction, suppose $Q'_\omega H = X^*$. Thus $Q'_\omega \colon \operatorname{ker}(Q')^\perp \to X^*$ is an isomorphism from a Hilbert space onto $X^*$.
\end{proof}

\begin{proposition} \label{p:Proposition on existence of Lagrange multipliers}
Every $b \in \mathbb{S}_{X_Q}$ is a Lagrange multiplier of some $\omega \in \mathcal{A}$.
\end{proposition}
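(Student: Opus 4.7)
The plan is to apply spectral theory of compact self-adjoint operators. Fix $b \in \mathbb{S}_{X_Q}$. By Assumptions \ref{Assumption on the bilinear operator}(ii) and \ref{Assumption on the bilinear operator 2}(i) the operator $T_b \colon H \to H$ is compact and self-adjoint, while Assumption \ref{Assumption on the bilinear operator 2}(ii) gives
\[\sup_{\norm{f}_H = 1} \langle T_b f, f\rangle_H = \norm{T_b}_{H \to H} = 1.\]
My aim is to show that this supremum is attained at some $\omega \in \mathbb{S}_H$ satisfying $T_b \omega = \omega$, and then to verify that this $\omega$ lies in $\mathscr{A}$ and admits $b$ as a Lagrange multiplier.

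For the attainment step, I would take a maximising sequence $(\omega_n) \subset \mathbb{S}_H$ with $\langle T_b \omega_n, \omega_n\rangle_H \to 1$ and extract a weakly convergent subsequence $\omega_n \rightharpoonup \omega$ from the sequential weak compactness of $\mathbb{B}_H$. Compactness of $T_b$ upgrades this to $T_b \omega_n \to T_b \omega$ in norm, so $\langle T_b \omega, \omega\rangle_H = \lim_n \langle T_b \omega_n, \omega_n\rangle_H = 1$. The chain of inequalities
\[1 = \langle T_b \omega, \omega\rangle_H \le \norm{T_b \omega}_H \norm{\omega}_H \le \norm{\omega}_H^2 \le 1\]
must then hold with equality throughout, forcing $\norm{\omega}_H = 1$ and, by the Cauchy--Schwarz equality case, $T_b \omega = \lambda \omega$ for some scalar $\lambda$. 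Taking the inner product with $\omega$ pins down $\lambda = 1$.

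With $T_b \omega = \omega$ in hand, the rest is a short computation. The identity $\langle b, Q \omega\rangle_{X - X^*} = \langle T_b \omega, \omega\rangle_H = 1$, together with the trivial estimate $\norm{Q \omega}_{X^*_Q} \le \norm{\omega}_H^2 = 1$ inherent in the definition of the dual norm, shows $Q \omega \in \mathbb{S}_{X^*_Q}$, so $\omega \in \mathscr{A}$. Differentiating $\epsilon \mapsto \langle T_b(\omega + \epsilon \varphi), \omega + \epsilon \varphi\rangle_H$ at $\epsilon = 0$ and using self-adjointness yields
\[\langle b, Q'_\omega \varphi\rangle_{X_Q^{**} - X_Q^*} = 2 \mathbf{Re} \langle T_b \omega, \varphi\rangle_H = 2 \mathbf{Re} \langle \omega, \varphi\rangle_H \qquad \text{for every } \varphi \in H,\]
which is exactly the Lagrange multiplier condition \eqref{Lagrange multiplier condition 2}.

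I foresee no serious obstacle: the argument is a streamlined Rayleigh-quotient construction of the largest eigenvalue of a compact self-adjoint operator, transplanted to the present axiomatic setting. The one subtle point is that Assumption \ref{Assumption on the bilinear operator 2}(ii) is stated \emph{without} an absolute value; this sign information is what guarantees $\lambda = +1$ rather than $\lambda = -1$, so that $b$ itself (not $-b$) serves as the Lagrange multiplier.
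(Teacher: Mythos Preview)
Your proof is correct and follows essentially the same route as the paper: pick a maximising sequence for $\langle T_b\cdot,\cdot\rangle_H$ on $\mathbb{S}_H$, pass to a weak limit, and use compactness of $T_b$ to upgrade $T_b\omega_n\to T_b\omega$ and close the chain of inequalities. The paper compresses this into one line and leaves the verification that $\langle b,Q\omega\rangle=1$ entails the Lagrange multiplier condition to the equivalences in Proposition~\ref{p:Proposition on equivalent characterizations of the Lagrange multiplier condition}, whereas you spell out $T_b\omega=\omega$ via the Cauchy--Schwarz equality case and differentiate directly; both are the same argument at different levels of detail.
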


\begin{proof}
Choose a maximizing sequence: $\langle T_b \omega_j, \omega_j \rangle_H \to 1$. For a subsequence, $\omega_j \rightharpoonup \omega$ in $H$, and so $T_b \omega_j \to T_b \omega$, giving $1 = \langle T_b \omega, \omega \rangle_H = \langle b, Q \omega \rangle_{X_Q-X_Q^*} \le \norm{Q f}_{X_Q^*} \le \norm{f}_H^2 \le 1$, so that $\omega \in \mathscr{A}$.
\end{proof}

\begin{definition} \label{d:Definition of a James boundary}
When $Y$ is a real Banach space, a set $A \subset \mathbb{S}_{Y^*}$ is call a \emph{James boundary of $Y$} if for every $y \in \mathbb{S}_Y$ there exists $y^* \in A$ such that $\langle y^*, y \rangle_{Y^*-Y} = 1$.
\end{definition}

Proposition \ref{p:Proposition on existence of Lagrange multipliers} then says that $Q(\mathscr{A})$ is a James boundary of $X_Q$. By Godefroy's Theorem (see ~\cite[Theorem 3.46]{FHHMPZ}), if $A \subset \mathbb{S}_{Y^*}$ is a separable James boundary of $Y$, then $\overline{\operatorname{co}}(A) = \mathbb{B}_{Y^*}$.

\begin{corollary} \label{c:Corollary on James boundaries}
$\overline{\operatorname{co}}(Q(\mathscr{A})) = \mathbb{B}_{X^*_Q}$.
\end{corollary}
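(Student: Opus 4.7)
The plan is to obtain this corollary as a direct application of Godefroy's Theorem, which the author has just cited. Godefroy's Theorem states that any separable James boundary $A \subset \mathbb{S}_{Y^*}$ of a real Banach space $Y$ satisfies $\overline{\operatorname{co}}(A) = \mathbb{B}_{Y^*}$. So the task reduces to verifying the three hypotheses for the choice $Y = X_Q$ and $A = Q(\mathscr{A})$.

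First I would record that $Q(\mathscr{A}) \subset \mathbb{S}_{X_Q^*}$. This is by construction: $\mathscr{A}$ was defined in \textsection \ref{ss:Overall strategy and aim} as $\{\omega \in \mathbb{S}_H : Q\omega \in \mathbb{S}_{X_Q^*}\}$. Second, I would invoke Proposition \ref{p:Proposition on existence of Lagrange multipliers}: every $b \in \mathbb{S}_{X_Q}$ is a Lagrange multiplier for some $\omega \in \mathscr{A}$, and the proof showed that this Lagrange multiplier relation forces $\langle b, Q\omega\rangle_{X_Q - X_Q^*} = 1$. By Definition \ref{d:Definition of a James boundary}, $Q(\mathscr{A})$ is thus a James boundary of $X_Q$. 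Third, separability of $Q(\mathscr{A})$ follows from the blanket assumption that $X^*$ is separable (hence $X_Q^*$ is separable in its equivalent norm, and any subset of a separable metric space is separable).

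With all three hypotheses verified, Godefroy's Theorem yields $\overline{\operatorname{co}}(Q(\mathscr{A})) = \mathbb{B}_{X_Q^*}$.

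There is essentially no obstacle — the work has already been done in Proposition \ref{p:Proposition on existence of Lagrange multipliers} and in the standing separability assumption on $X^*$; the corollary is a one-line invocation of Godefroy's Theorem. The only point worth being careful about is the identification of Lagrange multipliers with James-boundary functionals, which was implicitly established in the proof of Proposition \ref{p:Proposition on existence of Lagrange multipliers} through the chain of equalities $1 = \langle T_b\omega,\omega\rangle_H = \langle b, Q\omega\rangle_{X_Q - X_Q^*}$.
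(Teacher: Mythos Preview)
Your proposal is correct and follows exactly the argument the paper intends: the sentence immediately preceding the corollary already observes that Proposition \ref{p:Proposition on existence of Lagrange multipliers} makes $Q(\mathscr{A})$ a James boundary of $X_Q$, and the corollary is then a direct application of Godefroy's Theorem together with the standing separability of $X^*$. No further argument is given in the paper beyond this, so your write-up simply makes the implicit one-line deduction explicit.
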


\begin{corollary} \label{c:Norm corollary}
$\norm{f}_{X_Q^*} = \inf\{\sum_{j=1}^\infty \norm{\omega_j}_H^2 \colon f = \sum_{j=1}^\infty Q \omega_j\}$ for all $f \in X_Q^*$.
\end{corollary}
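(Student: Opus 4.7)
Let $N(f)$ denote the right-hand side in the claim. The plan is to prove the two inequalities separately, with the nontrivial direction reducing immediately to Corollary \ref{c:Corollary on James boundaries} via a standard telescoping (atomic-decomposition) argument.

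\textbf{The easy inequality $\|f\|_{X_Q^*} \le N(f)$.} For every $\omega \in H$ and every $b \in X_Q$ with $\|b\|_{X_Q}=1$, Definition \ref{Definition of the quadratic quantity} together with $\|b\|_{X_Q} = \|T_b\|_{H \to H}$ gives
\[
|\langle b, Q\omega\rangle_{X-X^*}| = |\langle T_b\omega,\omega\rangle_H| \le \|\omega\|_H^2,
\]
so $\|Q\omega\|_{X_Q^*} \le \|\omega\|_H^2$. If $f = \sum_j Q\omega_j$ in $X_Q^*$, subadditivity of the norm gives $\|f\|_{X_Q^*} \le \sum_j \|\omega_j\|_H^2$, and taking the infimum yields the bound.

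\textbf{The inequality $N(f) \le \|f\|_{X_Q^*}$.} Observing that for $c \in [0,\infty)$ we have $cQ\omega = Q(\sqrt{c}\,\omega)$, one sees that both $\|\cdot\|_{X_Q^*}$ and $N(\cdot)$ are positively homogeneous of degree $1$, so it suffices to show $N(f) \le 1/(1-\epsilon)$ for arbitrary $\epsilon \in (0,1)$ whenever $\|f\|_{X_Q^*} \le 1$. By Corollary \ref{c:Corollary on James boundaries}, the closed convex hull of $Q(\mathscr{A})$ is $\mathbb{B}_{X_Q^*}$, so one can choose a finite convex combination $g_0 = \sum_{j} \lambda_j^{(0)} Q\omega_j^{(0)}$ with $\omega_j^{(0)} \in \mathscr{A}$, $\lambda_j^{(0)} \ge 0$, $\sum_j \lambda_j^{(0)}=1$, and $\|f-g_0\|_{X_Q^*} \le \epsilon$. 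Applying the same fact to $(f-g_0)/\epsilon$, which lies in $\mathbb{B}_{X_Q^*}$, and iterating, one produces finite convex combinations $g_k$ of elements of $Q(\mathscr{A})$ satisfying
\[
\Bigl\|f - \sum_{k=0}^{K-1}\epsilon^k g_k\Bigr\|_{X_Q^*} \le \epsilon^K \qquad\text{for every } K\in\N.
\]
Passing to the limit, $f = \sum_{k=0}^\infty \epsilon^k g_k$ in norm.

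\textbf{Assembling the decomposition.} Using $\epsilon^k \lambda_j^{(k)} Q\omega_j^{(k)} = Q\!\bigl(\sqrt{\epsilon^k \lambda_j^{(k)}}\,\omega_j^{(k)}\bigr)$ one rewrites the double sum as $f = \sum_{k,j} Q\bigl(\sqrt{\epsilon^k\lambda_j^{(k)}}\,\omega_j^{(k)}\bigr)$, which upon relabelling is a valid competitor in the infimum defining $N(f)$. Since each $\omega_j^{(k)}$ is a unit vector of $H$, the total energy is
\[
\sum_{k=0}^\infty \sum_j \epsilon^k \lambda_j^{(k)} = \sum_{k=0}^\infty \epsilon^k = \frac{1}{1-\epsilon},
\]
so $N(f) \le 1/(1-\epsilon)$. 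Letting $\epsilon \downarrow 0$ finishes the proof.

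\textbf{Where the content sits.} No step is a genuine obstacle: the Lagrange-multiplier machinery of the preceding section is already packaged into Corollary \ref{c:Corollary on James boundaries}, and what remains is the standard iteration converting a Godefroy-type boundary approximation into a norm-convergent atomic series. The only item requiring a moment's care is the elementary identity $cQ\omega = Q(\sqrt c\,\omega)$ for $c\ge 0$, which lets one absorb the convex weights $\epsilon^k\lambda_j^{(k)}$ into the $H$-variable so that each atomic summand is genuinely of the form $Q\omega$.
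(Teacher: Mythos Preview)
Your proof is correct and follows precisely the route the paper intends: the corollary is stated without proof immediately after Corollary~\ref{c:Corollary on James boundaries}, and your geometric-series iteration is the standard way to turn $\overline{\operatorname{co}}(Q(\mathscr{A})) = \mathbb{B}_{X_Q^*}$ into the norm formula, with the homogeneity identity $cQ\omega = Q(\sqrt{c}\,\omega)$ doing the work of absorbing the convex weights.
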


\subsection{Further properties of Lagrange multipliers}
The Lagrange multiplier condition \eqref{Lagrange multiplier condition 2} has several useful equivalent characterisations, and we collect two of them in the following proposition.
\begin{proposition} \label{p:Proposition on equivalent characterizations of the Lagrange multiplier condition}
Whenever $b \in \mathbb{S}_{X_Q^{**}}$ and $\omega \in \mathbb{S}_H$, the following conditions are equivalent:
\renewcommand{\labelenumi}{(\roman{enumi})}
\begin{enumerate}
\item $b$ is a Lagrange multiplier of $\omega$, i.e., \eqref{Lagrange multiplier condition 2} holds.

\item $\langle b, Q \omega \rangle_{X_Q^{**}-X_Q^*} = 1$.

\item $\omega \in \ker(I-T_b)$.
\end{enumerate}
\noindent If (i)--(iii) hold, then $\omega$ is a minimum norm solution and belongs to $\mathscr{A}$.
\end{proposition}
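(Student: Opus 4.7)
The plan is to prove the chain $(\mathrm{i}) \Leftrightarrow (\mathrm{iii}) \Leftrightarrow (\mathrm{ii})$ by direct calculation and then deduce the final assertion from a short norm-duality argument. The technical engine is Assumption \ref{Assumption on the bilinear operator 2}: self-adjointness of $T_b$ makes the G\^ateaux derivative of the quadratic form $Q$ transparent, and the identity $\|T_b\|_{H\to H} = \sup_{\|f\|_H = 1} \langle T_b f, f\rangle$ propagates the renorming $\|b\|_{X_Q} = \|T_b\|_{H\to H}$ from $X$ to the bidual, so that $\|b\|_{X_Q^{**}} = \|T_b\|_{H\to H} = 1$ under the hypothesis $b \in \mathbb{S}_{X_Q^{**}}$.

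For $(\mathrm{i}) \Leftrightarrow (\mathrm{iii})$, I would expand $\langle b, Q(\omega + \epsilon \varphi)\rangle = \langle T_b(\omega + \epsilon\varphi), \omega + \epsilon\varphi\rangle_H$ at $\epsilon = 0$ and use the self-adjointness of $T_b$ to obtain
\[\langle b, Q'_\omega \varphi \rangle_{X_Q^{**} - X_Q^*} = 2 \mathbf{Re} \langle T_b \omega, \varphi \rangle_H.\]
Substituting into \eqref{Lagrange multiplier condition 2} reduces the Lagrange multiplier condition to $\mathbf{Re} \langle \omega - T_b \omega, \varphi \rangle_H = 0$ for every $\varphi \in H$. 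Testing against $\varphi = \omega - T_b\omega$ (and additionally $\varphi = i(\omega - T_b\omega)$ in the complex case) forces $T_b \omega = \omega$, and the reverse implication is immediate.

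For $(\mathrm{ii}) \Leftrightarrow (\mathrm{iii})$ the implication $(\mathrm{iii}) \Rightarrow (\mathrm{ii})$ is trivial via $\langle b, Q\omega\rangle = \langle T_b \omega, \omega\rangle_H = \|\omega\|_H^2 = 1$. For the converse I would invoke the equality case of Cauchy--Schwarz: since $\|T_b\|_{H\to H} = 1$ and $\|\omega\|_H = 1$, the chain
\[1 = \langle T_b \omega, \omega\rangle_H \leq \|T_b \omega\|_H \,\|\omega\|_H \leq 1\]
forces equality throughout, so $T_b\omega$ is a scalar multiple of $\omega$; the value $\langle T_b \omega, \omega\rangle_H = 1$ then pins that scalar to $1$.

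Finally, assume (i)--(iii) hold. The standing bound
\[\|Q\gamma\|_{X_Q^*} \;=\; \sup_{\|c\|_{X_Q} \leq 1} \langle T_c \gamma, \gamma\rangle_H \;\leq\; \|\gamma\|_H^2 \qquad (\gamma \in H),\]
together with (ii), yields
\[1 = \langle b, Q\omega\rangle_{X_Q^{**} - X_Q^*} \leq \|b\|_{X_Q^{**}}\, \|Q\omega\|_{X_Q^*} \leq \|\omega\|_H^2 = 1,\]
so $\|Q\omega\|_{X_Q^*} = 1$ and hence $\omega \in \mathscr{A}$. Applying the same bound to any competitor $\gamma \in H$ with $Q\gamma = Q\omega$ gives $\|\gamma\|_H^2 \geq \|Q\gamma\|_{X_Q^*} = 1 = \|\omega\|_H^2$, so $\omega$ is a minimum norm solution. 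I do not anticipate a serious obstacle; the only delicate point is the real-/imaginary-part bookkeeping in the complex case, which is routinely handled by testing \eqref{Lagrange multiplier condition 2} against both $\varphi$ and $i\varphi$.
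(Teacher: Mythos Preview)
Your proof is correct and follows essentially the same route as the paper's. The only organizational difference is that you prove the chain $(\mathrm{i})\Leftrightarrow(\mathrm{iii})\Leftrightarrow(\mathrm{ii})$ by directly expanding $\langle b, Q'_\omega\varphi\rangle = 2\mathbf{Re}\langle T_b\omega,\varphi\rangle_H$, whereas the paper proves $(\mathrm{ii})\Rightarrow(\mathrm{i})$ via the Rayleigh-quotient function $\epsilon\mapsto \langle b, Q(\omega+\epsilon\varphi)\rangle/\|\omega+\epsilon\varphi\|_H^2$ and then handles $(\mathrm{ii})\Leftrightarrow(\mathrm{iii})$ exactly as you do (strict convexity of $H$ being the equality case of Cauchy--Schwarz). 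You also spell out the final assertion about $\omega\in\mathscr{A}$ and minimum-norm, which the paper leaves to the reader. One small remark: in your $(\mathrm{i})\Rightarrow(\mathrm{iii})$ step, testing $\varphi=\omega-T_b\omega$ alone already gives $\|\omega-T_b\omega\|_H^2=0$, so the extra test with $i\varphi$ is unnecessary.
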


\begin{proof}
We first prove (i) $\Leftrightarrow$ (ii). Suppose (ii) holds and fix $\varphi \in H$ and small $\delta > 0$. The function $I \colon [-\delta,\delta] \to \R$, $I(\epsilon) \defeq \langle b, Q(\omega + \epsilon \varphi) \rangle_{X_Q^{**}-X_Q^*}/\norm{\omega+\epsilon \varphi}_H^2$ is maximised at $\epsilon = 0$, and therefore $I'(0) = \langle b, Q'_\omega \varphi \rangle_{X_Q^{**}-X_k^*} - 2 \mathbf{Re} \langle \omega, \varphi \rangle_H = 0$, giving (i). The direction (i) $\Rightarrow$ (ii) is proved by setting $\varphi = \omega$. We then prove (ii) $\Leftrightarrow$ (iii). First, (ii) gives $\langle \omega, T_b \omega \rangle_H = 1$. Since $\norm{T_b}_{H \to H} = 1$ and $H$ is strictly convex, we conclude that (iii) holds. On the other hand, if (iii) holds, then $\langle b, Q \omega \rangle_{X_Q^{**}-X_Q^*} = \langle \omega, T_b \omega \rangle_H = \langle \omega, \omega \rangle_H = 1$.
\end{proof}

In many cases, conditions (i)--(iii) can be supplemented by Euler-Lagrange equations for a suitable potential. In the case of the planar Jacobian, denoting $u_{\bar{z}} = \omega$ as before, (i)--(iii) are equivalent to $u_{z\bar{z}} = (bu_{\bar{z}})_z - (bu_z)_{\bar{z}}$~\cite[Proposition 4.8]{Lin15}.

\begin{lemma} \label{l:Isometric isomorphism}
$(X^{**}, \norm{T_\cdot}_{H \to H}) = (X_Q)^{**}$ isometrically.
\end{lemma}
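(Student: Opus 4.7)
The plan is to identify the underlying sets of $X^{**}$ and $(X_Q)^{**}$ canonically and then verify that the bidual norm coincides with $b \mapsto \norm{T_b}_{H \to H}$. That the underlying sets agree is immediate from Assumption \ref{Assumption on the bilinear operator}(i): restricted to $X$, it says that $\norm{\cdot}_{X_Q}$ and $\norm{\cdot}_X$ are equivalent, so taking duals twice gives $(X_Q)^{**} = X^{**}$ as sets with equivalent norms. Only the isometric equality of norms remains.

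For the inequality $\norm{T_b}_{H \to H} \le \norm{b}_{(X_Q)^{**}}$, I would first note that $Q(\mathbb{S}_H) \subset \mathbb{B}_{X_Q^*}$: by Definition \ref{Definition of the quadratic quantity} and the definition of $\norm{\cdot}_{X_Q}$,
\[
\norm{Q\omega}_{X_Q^*} = \sup_{b' \in \mathbb{S}_{X_Q}} \abs{\langle T_{b'}\omega, \omega \rangle_H} \le \norm{\omega}_H^2 = 1 \quad \text{for every } \omega \in \mathbb{S}_H.
\]
Invoking Assumption \ref{Assumption on the bilinear operator 2}(ii) and Definition \ref{Definition of the quadratic quantity},
\[
\norm{T_b}_{H \to H} = \sup_{\omega \in \mathbb{S}_H} \langle T_b\omega, \omega \rangle_H = \sup_{\omega \in \mathbb{S}_H} \langle b, Q\omega \rangle \le \sup_{f \in \mathbb{B}_{X_Q^*}} \langle b, f \rangle \le \norm{b}_{(X_Q)^{**}}.
\]

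For the reverse inequality $\norm{b}_{(X_Q)^{**}} \le \norm{T_b}_{H \to H}$, I would apply Corollary \ref{c:Norm corollary}: given $f \in \mathbb{B}_{X_Q^*}$ and $\epsilon > 0$, write $f = \sum_{j=1}^\infty Q\omega_j$ with $\sum_{j=1}^\infty \norm{\omega_j}_H^2 \le 1 + \epsilon$, and use continuity of $b$ on $X_Q^*$ together with Definition \ref{Definition of the quadratic quantity} to estimate
\[
\abs{\langle b, f \rangle} \le \sum_{j=1}^\infty \abs{\langle T_b\omega_j, \omega_j \rangle_H} \le \norm{T_b}_{H \to H} \sum_{j=1}^\infty \norm{\omega_j}_H^2 \le (1 + \epsilon)\norm{T_b}_{H \to H}.
\]
Letting $\epsilon \to 0$ finishes the proof. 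No substantial obstacle is anticipated: the key inputs are the factorisation norm formula of Corollary \ref{c:Norm corollary} (itself a consequence of Godefroy's theorem applied to the James boundary $Q(\mathscr{A})$ provided by Corollary \ref{c:Corollary on James boundaries}) and the operator-norm definition of $\norm{\cdot}_{X_Q}$; the only minor technical point is the term-by-term evaluation of $b$ on the series, which is justified by continuity of $b \in (X_Q)^{**}$ on $X_Q^*$.
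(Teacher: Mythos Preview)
Your proof is correct and follows essentially the same route as the paper's. The paper compresses the two inequalities into a single chain by using Corollary~\ref{c:Corollary on James boundaries} directly: since $Q(\mathbb{B}_H) \subset \mathbb{B}_{X_Q^*} = \overline{\operatorname{co}}(Q\mathscr{A})$ and $Q\mathscr{A} \subset Q(\mathbb{S}_H)$, the supremum of $\langle b,\cdot\rangle$ over $Q(\mathbb{S}_H)$ agrees with that over $\mathbb{B}_{X_Q^*}$, giving $\norm{T_b}_{H\to H} = \norm{b}_{(X_Q)^{**}}$ in one line; you instead obtain the reverse inequality via the series decomposition of Corollary~\ref{c:Norm corollary}, which is itself a reformulation of the same closed-convex-hull fact, so the underlying content is identical.
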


\begin{proof}
Let $b \in X^{**}$. Since $Q(\mathbb{B}_H) \subset \mathbb{B}_{X_Q^*} = \overline{\operatorname{co}}(Q \mathscr{A})$, we get
\[\norm{T_b}_{H \to H} = \sup_{\norm{f}_H = 1} \langle b, Q f \rangle_{X^{**}-X^*}
= \sup_{h \in \operatorname{co}(Q \mathscr{A})} \langle b, h \rangle_{X^{**}-X^*} = \norm{b}_{(X^*,\norm{\cdot}_{X_Q^*})^*}.\]
\end{proof}

\begin{proposition} \label{p:Lagrange multiplier equivalent condition}
Suppose $b \in \mathbb{S}_{X_Q^{**}}$. The following conditions are equivalent.
\renewcommand{\labelenumi}{(\roman{enumi})}
\begin{enumerate}
\item $b$ is a Lagrange multiplier of some $\omega \in \mathscr{A}$.

\item $b$ is norm-attaining.
\end{enumerate}
\end{proposition}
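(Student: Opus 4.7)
The proposition is an equivalence, and I would treat the two directions separately. The implication $(\mathrm{i}) \Rightarrow (\mathrm{ii})$ is immediate from Proposition~\ref{p:Proposition on equivalent characterizations of the Lagrange multiplier condition}: if $b$ is a Lagrange multiplier of some $\omega \in \mathscr{A}$, then $\langle b, Q\omega\rangle_{X_Q^{**}-X_Q^*} = 1$ by the equivalence (i)$\Leftrightarrow$(ii) stated there, and $Q\omega \in \mathbb{S}_{X_Q^*}$ by the definition of $\mathscr{A}$; hence $b$ attains its norm at $Q\omega$.

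For $(\mathrm{ii}) \Rightarrow (\mathrm{i})$, the plan is to convert a point $h \in \mathbb{S}_{X_Q^*}$ with $\langle b, h\rangle_{X_Q^{**}-X_Q^*}=1$ into some $\omega \in \mathbb{S}_H$ satisfying $T_b\omega = \omega$; Proposition~\ref{p:Proposition on equivalent characterizations of the Lagrange multiplier condition} will then supply $\omega \in \mathscr{A}$ with $b$ as its Lagrange multiplier. Lemma~\ref{l:Isometric isomorphism} already gives $\|T_b\|_{H\to H} = \|b\|_{X_Q^{**}}=1$. Using Corollary~\ref{c:Norm corollary}, for each $n \in \N$ I would choose a near-minimal representation $h = \sum_j Q\omega_j^{(n)}$ with $\sum_j \|\omega_j^{(n)}\|_H^2 < 1 + 1/n$, and expand the pairing:
\[
1 = \langle b,h\rangle_{X_Q^{**}-X_Q^*} = \sum_j \langle T_b \omega_j^{(n)}, \omega_j^{(n)}\rangle_H \le \sum_j \|\omega_j^{(n)}\|_H^2 < 1 + \tfrac{1}{n}.
\]
Since $\langle T_b \omega_j^{(n)}, \omega_j^{(n)}\rangle_H \le \|\omega_j^{(n)}\|_H^2$ termwise, the total excess is at most $1/n$, so an averaging argument yields indices $j_n$ for which the normalised vectors $\tilde\omega_n \defeq \omega_{j_n}^{(n)}/\|\omega_{j_n}^{(n)}\|_H \in \SSS_H$ satisfy $\langle T_b \tilde\omega_n, \tilde\omega_n\rangle_H \to 1$. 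After passing to a weakly convergent subsequence $\tilde\omega_n \rightharpoonup \omega_\infty$, the self-adjointness of $T_b$ and the identity
\[
\langle T_b \tilde\omega_n, \tilde\omega_n\rangle_H = \langle T_b(\tilde\omega_n-\omega_\infty),\tilde\omega_n-\omega_\infty\rangle_H + 2\Re\langle T_b \omega_\infty, \tilde\omega_n-\omega_\infty\rangle_H + \langle T_b \omega_\infty,\omega_\infty\rangle_H,
\]
together with $\|\tilde\omega_n-\omega_\infty\|_H^2 \to 1 - \|\omega_\infty\|_H^2$ and $\|T_b\|_{H\to H} \le 1$, imply $\langle T_b \omega_\infty, \omega_\infty\rangle_H \ge \|\omega_\infty\|_H^2$; equality in $\|T_b\|=1$ then forces $T_b\omega_\infty = \omega_\infty$, so $\omega \defeq \omega_\infty/\|\omega_\infty\|_H$ is the required element of $\mathscr{A}$ as soon as $\omega_\infty \ne 0$.

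The principal obstacle is ruling out $\omega_\infty = 0$: when $b \in X_Q$, the operator $T_b$ is compact and the weak-to-weak$^*$ continuity of $Q$ gives $\omega_\infty \ne 0$ directly, as in Proposition~\ref{p:Proposition on existence of Lagrange multipliers}, but for general $b \in X_Q^{**}$ the weak limit may genuinely vanish. The remedy is to use that $h$ is held fixed: if the infimum in Corollary~\ref{c:Norm corollary} is attained for this specific $h$, then the chain of inequalities above becomes an equality, forcing $\langle T_b \omega_j^*, \omega_j^*\rangle_H = \|\omega_j^*\|_H^2$ for each nonzero member of the minimising decomposition, and normalising any one of these gives $\omega \in \mathscr{A}$ with no weak limit required. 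Establishing attainment of this infimum for norm-attaining $h$ by a direct-method argument in $\ell^2(H)$, propagated through the series via the weak-to-weak$^*$ continuity of $Q$, is the main technical task.
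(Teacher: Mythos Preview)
Your direction $(\mathrm{i})\Rightarrow(\mathrm{ii})$ is correct and matches the paper.

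For $(\mathrm{ii})\Rightarrow(\mathrm{i})$ you take a genuinely different route from the paper, and the route has a real gap at exactly the point you flag. The paper does not try to extract an eigenvector from near-optimal decompositions of the given $h$. Instead it replaces $h$ by an extreme point: since $b$ attains its norm, the face $D^{-1}(b)=\{f\in\mathbb{S}_{X_Q^*}:\langle b,f\rangle=1\}$ is nonempty, closed, bounded and convex; by the Bessaga--Pe{\l}czy\'nski theorem (separable dual spaces have the Krein--Milman property) it has an extreme point $f$, which is then automatically an extreme point of $\mathbb{B}_{X_Q^*}$; Lemma~\ref{l:Krein-Milman result} gives $f=Q\omega$ with $\omega\in\mathscr{A}$, and Proposition~\ref{p:Proposition on equivalent characterizations of the Lagrange multiplier condition} finishes. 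Nothing about the original $h$ is needed beyond $D^{-1}(b)\ne\emptyset$.

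Your remedy, by contrast, asks for attainment of the infimum in Corollary~\ref{c:Norm corollary} at the specific $h$, to be proved by a direct-method argument in $\ell^2(H)$. That argument does not go through as stated: if $(\omega_j^{(n)})_j\rightharpoonup(\omega_j)_j$ weakly in $\ell^2(H)$, one has $\omega_j^{(n)}\rightharpoonup\omega_j$ for each $j$ and hence $Q\omega_j^{(n)}\overset{*}{\rightharpoonup}Q\omega_j$, but there is no uniform control on the tails $\sum_{j>J}\|\omega_j^{(n)}\|_H^2$, so one cannot conclude $\sum_j Q\omega_j=h$; mass can leak to infinity in the index $j$ just as it can in the approximate-eigenvector argument you started with. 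Indeed, attainment of that infimum for \emph{every} $h\in\mathbb{S}_{X_Q^*}$ is tightly linked to Question~\ref{q:Lindberg}, so a short proof for the norm-attaining case would be surprising. The paper's extreme-point trick sidesteps all of this: it does not prove attainment at $h$, it simply moves to a better point of $D^{-1}(b)$ where attainment is automatic.
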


\begin{proof}
If $b$ is a Lagrange multiplier of $\omega \in \mathscr{A}$, then $T_b \omega = \omega$ and therefore $\langle b, Q \omega \rangle_{X_Q^{**}-X_Q^*} = \langle T_b \omega, \omega \rangle_H = 1$.

Conversely, if $b$ is norm-attaining, denote $D^{-1}(b) \defeq \{f \in \mathbb{S}_{X_Q^*} \colon \langle b, f \rangle_{X_Q^{**}-X_Q^*} = 1\}$. Since $D^{-1}(b)$ is closed, bounded and convex, Theorem \ref{Bessaga-Pelczynski theorem} implies that $D^{-1}(b)$ contains an extreme point $f$. The definition of $D^{-1}(b)$ implies that $f$ is also an extreme point of $\mathbb{S}_{X_Q^*}$, and Lemma \ref{l:Krein-Milman result} then gives $f \in Q(\mathscr{A})$.
\end{proof}

We also characterise elements of $\mathbb{S}_H$ that possess a Lagrange multiplier.

\begin{proposition} \label{p:Having a Lagrange multiplier}
Let $\omega \in \mathbb{S}_H$. The following conditions are equivalent.
\renewcommand{\labelenumi}{(\roman{enumi})}
\begin{enumerate}
\item $\omega$ has a Lagrange multiplier $b \in \mathbb{S}_{X_Q^{**}}$.

\item $\omega \in \mathscr{A}$.
\end{enumerate}
\end{proposition}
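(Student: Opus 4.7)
The plan is to reduce both implications to the equivalent characterisations of the Lagrange multiplier condition already established in Proposition \ref{p:Proposition on equivalent characterizations of the Lagrange multiplier condition}, combined with the identification $(X^{**}, \norm{T_\cdot}_{H \to H}) = (X_Q)^{**}$ from Lemma \ref{l:Isometric isomorphism}.

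For the direction (i) $\Rightarrow$ (ii), I would start from a Lagrange multiplier $b \in \mathbb{S}_{X_Q^{**}}$ of $\omega$. By Proposition \ref{p:Proposition on equivalent characterizations of the Lagrange multiplier condition}, this is equivalent to $\langle b, Q\omega\rangle_{X_Q^{**}-X_Q^*} = 1$. On the other hand, from the very definition of the dual norm one has $\norm{Q\omega}_{X_Q^*} \le \norm{T_b}_{H \to H} \norm{\omega}_H^2 \le 1$, while the duality pairing forces $\norm{Q\omega}_{X_Q^*} \ge \langle b, Q\omega\rangle = 1$. Hence $\norm{Q\omega}_{X_Q^*} = 1$, i.e. $Q\omega \in \mathbb{S}_{X_Q^*}$, which is precisely the defining property of $\mathscr{A}$.

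For the direction (ii) $\Rightarrow$ (i), suppose $\omega \in \mathscr{A}$, so $Q\omega \in \mathbb{S}_{X_Q^*}$. A standard Hahn-Banach argument produces a functional $b \in \mathbb{S}_{(X_Q^*)^*}$ with $\langle b, Q\omega\rangle_{(X_Q^*)^*-X_Q^*} = \norm{Q\omega}_{X_Q^*} = 1$; by Lemma \ref{l:Isometric isomorphism}, we may view $b$ as an element of $\mathbb{S}_{X_Q^{**}}$. Applying Proposition \ref{p:Proposition on equivalent characterizations of the Lagrange multiplier condition} once more (condition (ii) $\Rightarrow$ (i)) converts this norming relation into the Lagrange multiplier condition, concluding the proof.

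There is essentially no obstacle here: both directions are immediate packaging of the previously established machinery (Proposition \ref{p:Proposition on equivalent characterizations of the Lagrange multiplier condition} and Lemma \ref{l:Isometric isomorphism}) together with Hahn-Banach. The only point that requires a moment's attention is the isometric identification of $X^{**}$ with $(X_Q)^{**}$, which is needed so that the Hahn-Banach functional can legitimately be called a Lagrange multiplier in the sense of \eqref{Lagrange multiplier condition 2}.
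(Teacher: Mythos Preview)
Your proposal is correct and follows essentially the same route as the paper: both directions are obtained from the equivalence in Proposition~\ref{p:Proposition on equivalent characterizations of the Lagrange multiplier condition} together with Hahn--Banach, with Lemma~\ref{l:Isometric isomorphism} making the bidual identification explicit. One cosmetic remark: the inequality you write as $\norm{Q\omega}_{X_Q^*} \le \norm{T_b}_{H\to H}\norm{\omega}_H^2$ is awkwardly phrased (the left-hand side does not depend on the particular $b$); what you mean, and what suffices, is simply $\norm{Q\omega}_{X_Q^*}\le \norm{\omega}_H^2=1$.
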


\begin{proof}
If $\omega \in \mathbb{S}_H$ has a Lagrange multiplier $b \in \mathbb{S}_{X_Q^{**}}$, then $\langle b, Q \omega \rangle_{X_Q^{**}-X_Q^*} = \langle T_b \omega, \omega \rangle_H = \norm{\omega}_H^2 = 1$ so that $\omega \in \mathscr{A}$.

Conversely, let $\omega \in \mathscr{A}$. Since $Q \omega \in \mathbb{S}_{X_Q^*}$, there exists $b \in \mathbb{S}_{X_Q^{**}}$ such that $\langle b, Q \omega \rangle_{X_Q^{**}-X_Q^*} = 1$. Now $b$ is a Lagrange multiplier of $\omega$.
\end{proof}

Using the results above we collect many equivalent formulations of Question \ref{q:Lindberg}.

\begin{corollary} \label{c:Collection of conditions}
If $Q(H)$ is dense in $X_Q^*$, the following conditions are equivalent:
\renewcommand{\labelenumi}{(\roman{enumi})}
\begin{enumerate}
\item $Q(\mathscr{A}) = \mathbb{S}_{X_Q^*}$.

\item $\E = \norm{\cdot}_{X_Q^*}$.

\item Every minimum norm solution satisfies $\norm{\omega}_H^2 = \norm{Q \omega}_{X_Q^*}$.

\item Every minimum norm solution has a Lagrange multiplier in $\mathbb{S}_{X_Q^{**}}$.
\end{enumerate}
\end{corollary}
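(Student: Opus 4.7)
The plan is to cycle $(i) \Rightarrow (ii) \Rightarrow (iii) \Rightarrow (iv) \Rightarrow (i)$, relying on two elementary observations that hold without the density hypothesis. First, $Q$ is positively $2$-homogeneous: $Q(\lambda\omega) = |\lambda|^2 Q\omega$ for every scalar $\lambda$, which follows at once from Definition \ref{Definition of the quadratic quantity}. Second, the identity $\|b\|_{X_Q} = \|T_b\|_{H\to H}$ gives the universal bound $\|Q\omega\|_{X_Q^*} \le \|\omega\|_H^2$, with equality (for $\omega \neq 0$) if and only if $\omega/\|\omega\|_H \in \mathscr{A}$.

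For $(i) \Rightarrow (ii)$, given a non-zero $f \in X_Q^*$ I would apply $(i)$ to $f/\|f\|_{X_Q^*} \in \mathbb{S}_{X_Q^*}$ to find $\tilde\omega \in \mathscr{A}$ with $Q\tilde\omega = f/\|f\|_{X_Q^*}$, then scale: $\omega = \|f\|_{X_Q^*}^{1/2}\tilde\omega$ solves $Q\omega = f$ with $\|\omega\|_H^2 = \|f\|_{X_Q^*}$. The trivial lower bound $\E \ge \|\cdot\|_{X_Q^*}$ forces equality, yielding $(ii)$. The step $(ii) \Rightarrow (iii)$ is immediate from the definition of $\E$, since a minimum norm solution $\omega$ of $Q\omega = f$ satisfies $\|\omega\|_H^2 = \E(f) = \|f\|_{X_Q^*} = \|Q\omega\|_{X_Q^*}$.

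The equivalence $(iii) \Leftrightarrow (iv)$ is a direct consequence of Proposition \ref{p:Having a Lagrange multiplier} combined with the scale-invariance of the Lagrange condition $T_b\omega = \omega$: if $(iii)$ holds then any non-zero minimum norm solution $\omega$ yields $\omega/\|\omega\|_H \in \mathscr{A}$, which admits a Lagrange multiplier $b \in \mathbb{S}_{X_Q^{**}}$, and the same $b$ works for $\omega$; conversely, a Lagrange multiplier of $\omega$ forces $\omega/\|\omega\|_H \in \mathscr{A}$ and hence $\|Q\omega\|_{X_Q^*} = \|\omega\|_H^2$ by the baseline observation. The case $\omega = 0$ is trivial. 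To close the cycle via $(iv) \Rightarrow (i)$, I would feed the a priori bound $\|\omega\|_H^2 \le \|Q\omega\|_{X_Q^*}$ supplied by $(iv) \Leftrightarrow (iii)$ into Proposition \ref{p:Weak continuity proposition}, invoking the (weak-$*$) density of $Q(H)$ in $X_Q^*$ to deduce $Q(H) = X_Q^*$; then every $f \in \mathbb{S}_{X_Q^*}$ has a minimum norm solution $\omega$ with $\|\omega\|_H = 1$, so $\omega \in \mathscr{A}$.

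The main subtlety I anticipate is that $(iii)$ and $(iv)$ are vacuous at points of $X_Q^* \setminus Q(H)$ and therefore do not, on their own, force surjectivity of $Q$. The hypothesis that $Q(H)$ is dense is used precisely at this juncture, via the open-mapping-type Proposition \ref{p:Weak continuity proposition}; without it one would only obtain $\E = \|\cdot\|_{X_Q^*}$ on the range of $Q$, rather than on the whole of $X_Q^*$.
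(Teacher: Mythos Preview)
Your argument is correct and follows precisely the route the paper intends: the corollary is stated without proof as a direct consequence of the preceding Propositions \ref{p:Weak continuity proposition}, \ref{p:Proposition on equivalent characterizations of the Lagrange multiplier condition} and \ref{p:Having a Lagrange multiplier}, together with the $2$-homogeneity of $Q$ and the baseline bound $\|Q\omega\|_{X_Q^*}\le\|\omega\|_H^2$. Your identification of the density hypothesis as needed exactly at the step $(iii)/(iv)\Rightarrow(i)$ via Proposition \ref{p:Weak continuity proposition} is also on the mark.
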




\section{The proof of Theorem \ref{t:Theorem on uniqueness of minimum norm solutions}} \label{s:The proof of Theorem on uniqueness of minimum norm solutions}
We get the proofs of Theorems \ref{t:Theorem on duality mapping} and \ref{t:Theorem on uniqueness of minimum norm solutions} underway by proving \eqref{Krein-Milman set inclusion}.

\begin{lemma} \label{l:Krein-Milman result}
If Assumptions \ref{Assumption on the bilinear operator}--\ref{Assumption on the bilinear operator 2} holds, then $Q(\mathscr{A}) \supset \operatorname{ext}(\mathbb{B}_{X^*_Q})$. Furthermore, $Q(\mathscr{A})$ is closed in the relative weak$^*$ topology of $\mathbb{S}_{X^*_Q}$. In particular, it is norm closed.
\end{lemma}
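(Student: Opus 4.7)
The plan is to first establish that $Q(\mathscr{A})$ is relatively weak$^*$ closed in $\mathbb{S}_{X_Q^*}$ and then to deduce the extreme-point inclusion via Milman's theorem (Theorem \ref{Milman's theorem}) together with Corollary \ref{c:Corollary on James boundaries}. The norm closedness of $Q(\mathscr{A})$ will then follow automatically, since the weak$^*$ topology is weaker than the norm topology on $\mathbb{S}_{X_Q^*}$.

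For the closedness step, the key observation is that Assumption \ref{Assumption on the bilinear operator}(i) makes $\norm{\cdot}_{X_Q}$ equivalent to $\norm{\cdot}_{X^{**}}$ on $X$, so $X_Q$ inherits separability from $X$. By Banach-Alaoglu together with the metrizability of the weak$^*$ topology on bounded subsets of a separable dual, it suffices to verify sequential weak$^*$ closedness. Given $(\omega_n) \subset \mathscr{A}$ with $Q\omega_n \overset{*}{\rightharpoonup} f$ and $\norm{f}_{X_Q^*} = 1$, I would extract along a subsequence a weak limit $\omega_n \rightharpoonup \omega \in \mathbb{B}_H$ via the sequential weak compactness of $\mathbb{B}_H$. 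The weak-to-weak$^*$ continuity of $Q$ built into Definition \ref{Definition of the quadratic quantity} then yields $Q\omega = f$, and the elementary bound
\[\norm{Q \omega}_{X_Q^*} \;=\; \sup_{\norm{b}_{X_Q} \le 1} \langle T_b \omega, \omega \rangle_H \;\le\; \norm{\omega}_H^2\]
combined with $\norm{f}_{X_Q^*}=1$ forces $\norm{\omega}_H = 1$, placing $\omega$ in $\mathscr{A}$.

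For the extreme-point inclusion, Corollary \ref{c:Corollary on James boundaries} supplies $\overline{\operatorname{co}}(Q(\mathscr{A})) = \mathbb{B}_{X_Q^*}$ in the norm topology. Since $\mathbb{B}_{X_Q^*}$ is weak$^*$ compact by Banach-Alaoglu and sandwiches both hulls, the identity persists for the weak$^*$-closed convex hull as well. Milman's theorem, applied in the Hausdorff locally convex space $(X_Q^*, w^*)$ to the compact set $\overline{\operatorname{co}}^{w^*}(Q(\mathscr{A})) = \mathbb{B}_{X_Q^*}$, delivers $\operatorname{ext}(\mathbb{B}_{X_Q^*}) \subset \overline{Q(\mathscr{A})}^{w^*}$. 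Since extreme points of $\mathbb{B}_{X_Q^*}$ automatically lie in $\mathbb{S}_{X_Q^*}$, the relative weak$^*$ closedness from the first step upgrades this to $\operatorname{ext}(\mathbb{B}_{X_Q^*}) \subset Q(\mathscr{A})$.

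I do not anticipate a serious obstacle; both halves are really an orchestration of the tools already assembled in \textsection\ref{Banach space geometric preliminaries}. The one subtlety worth double-checking is that in the limit step, weak$^*$ lower semicontinuity combined with $\norm{Q\omega}_{X_Q^*} \le \norm{\omega}_H^2$ genuinely forces $\norm{\omega}_H = 1$ rather than only $\le 1$; this is precisely what places the limit $\omega$ in $\mathscr{A}$ and thereby closes both parts of the argument simultaneously.
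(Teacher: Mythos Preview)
Your proposal is correct and follows essentially the same two-step approach as the paper: first establish relative weak$^*$ sequential closedness of $Q(\mathscr{A})$ in $\mathbb{S}_{X_Q^*}$ via weak compactness of $\mathbb{B}_H$ and the chain $\norm{\omega}_H^2 \le 1 = \norm{Q\omega}_{X_Q^*} \le \norm{\omega}_H^2$, then combine Corollary \ref{c:Corollary on James boundaries} with Milman's theorem in the weak$^*$ topology and use that extreme points lie on the sphere to land back in $Q(\mathscr{A})$. The only cosmetic difference is that the paper phrases the final step by explicitly picking a sequence $Q\omega_j \overset{*}{\rightharpoonup} f$ and reinvoking the closedness argument, whereas you package this as intersecting $\overline{Q(\mathscr{A})}^{w^*}$ with $\mathbb{S}_{X_Q^*}$; the content is the same.
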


\begin{proof}
We first show that $Q(\mathscr{A})$ is relatively weak$^*$ (sequentially) closed in $\mathbb{S}_{X_Q^*}$. Since $X_Q$ is separable, it suffices to consider sequences instead of nets. Suppose $\omega_j \in \mathscr{A}$ for every $j \in \N$ and $Q \omega_j \overset{*}{\rightharpoonup} f \in \mathbb{S}_{X_Q^*}$. We claim that $f = Q \omega$ for some $\omega \in \mathscr{A}$.

Passing to a subsequence, $\omega_j \rightharpoonup \omega \in \mathbb{B}_{H}$, since $\mathbb{B}_{H}$ is weakly compact. Thus $Q \omega_j \overset{*}{\rightharpoonup} Q \omega$, and so it suffices to show that $\omega \in \mathscr{A}$. The chain of inequalities $\norm{\omega}_H^2 \le \liminf_{j \to \infty} \norm{\omega_j}_H^2 = 1 = \norm{Q \omega}_{X^*_Q} \le \norm{\omega}_H^2$ implies that $\omega \in \mathscr{A}$.

We then show that every extreme point of $\mathbb{B}_{X_Q^*}$ lies in $Q(\mathscr{A})$. In Theorem \ref{Milman's theorem}, choose $K$ to be the weak$^*$ closure of $Q(\mathscr{A})$ in $\mathbb{B}_{X_Q^*}$. By Corollary \ref{c:Corollary on James boundaries}, $\overline{\operatorname{co}}^{\operatorname{w}^*}(K) = \mathbb{B}_{X_Q^*}$. Theorem \ref{Milman's theorem} implies that $\operatorname{ext}(\mathbb{B}_{X_Q^*}) \subset K$. Let now $f \in \operatorname{ext}(B_{X_Q^*})$ and choose $\omega_j \in \mathscr{A}$ such that $Q \omega_j \overset{*}{\rightharpoonup} f$. Since $Q(\mathscr{A})$ is relatively weak$^*$ closed in $\mathbb{S}_{X_Q^*}$, we obtain $f \in Q(\mathscr{A})$.
\end{proof}

We recall the statement of Theorem \ref{t:Theorem on uniqueness of minimum norm solutions}.

\begin{claim} \label{t:Theorem on uniqueness of minimum norm solutions2}
Suppose Assumptions \ref{Assumption on the bilinear operator}--\ref{Assumption on the bilinear operator 2} and \ref{Assumption 2} hold, and let $f \in \operatorname{ext}(\mathbb{B }_{X_Q^*})$. Then the minimum norm solution $\omega \in \A$ of $Q \omega = f$ is unique up to multiplication by $c \in \mathbb{S}_{\mathbb{K}}$.
\end{claim}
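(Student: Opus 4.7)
I argue by contradiction. Every minimum norm solution $\gamma$ of $Qv = f$ satisfies $1 = \|f\|_{X_Q^*} = \|Q\gamma\|_{X_Q^*} \leq \|\gamma\|_H^2 \leq 1$ and therefore lies in $\mathscr{A}$; thus I may suppose there exist $\omega,\gamma \in \mathscr{A}$ with $Q\omega = Q\gamma = f$ but $\gamma \notin \mathbb{S}_{\mathbb{K}}\omega$. By Proposition~\ref{p:Having a Lagrange multiplier}, $\omega$ has a Lagrange multiplier $b \in \mathbb{S}_{X_Q^{**}}$; since $\langle b, Q\gamma\rangle = \langle b, f\rangle = 1$, Proposition~\ref{p:Proposition on equivalent characterizations of the Lagrange multiplier condition} places both $\omega$ and $\gamma$ in the eigenspace $\ker(I - T_b) \subset H$. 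By Assumption~\ref{Assumption 2} I may pick $c_0 \in \mathbb{S}_{\mathbb{K}}$ with $Q'_\omega(c_0\gamma) \neq 0$ and, replacing $\gamma$ by $c_0\gamma$ (which preserves all the above properties), assume $Q'_\omega\gamma \neq 0$ in $X_Q^*$.

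\textbf{Extremality step.} For real $s$ near $0$ set $\omega_s^\pm \defeq \omega \pm s\gamma \in \ker(I - T_b)$. Direct expansion of the quadratic form and norm gives
\[\|\omega_s^\pm\|_H^2 = 1 \pm 2s\alpha + s^2, \qquad Q\omega_s^\pm = (1+s^2)f \pm sQ'_\omega\gamma, \qquad \alpha \defeq \mathbf{Re}\langle\omega,\gamma\rangle_H,\]
so the normalised vectors $\tilde\omega_s^\pm \defeq \omega_s^\pm/\|\omega_s^\pm\|_H$ lie in $\mathscr{A}$ and $Q\tilde\omega_s^\pm \in \mathbb{S}_{X_Q^*}$. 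With the convex weights $\lambda_\pm \defeq \|\omega_s^\pm\|_H^2/(2(1+s^2))$ (which sum to $1$ because $\|\omega_s^+\|_H^2+\|\omega_s^-\|_H^2 = 2(1+s^2)$) a short calculation gives $\lambda_+Q\tilde\omega_s^+ + \lambda_-Q\tilde\omega_s^- = f$. Extremality of $f$ in $\mathbb{B}_{X_Q^*}$ forces $Q\tilde\omega_s^+ = Q\tilde\omega_s^- = f$, and subtracting the two equations yields $Q'_\omega\gamma = 2\alpha f$. In the complex case, the analogous perturbation along $i\gamma$ produces $Q'_\omega(i\gamma) = 2\mathbf{Im}\langle\omega,\gamma\rangle_H\, f$; combining, $Q'_\omega(c'\gamma) = 2\mathbf{Re}(\bar{c'}\langle\omega,\gamma\rangle_H)\,f$ for every $c' \in \mathbb{K}$.

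\textbf{Orthogonalisation and contradiction.} Let $c_1 \defeq \langle\omega,\gamma\rangle_H$; since $\gamma \notin \mathbb{S}_{\mathbb{K}}\omega$ we have $|c_1|<1$. The Gram--Schmidt unit vector
\[\hat\eta \defeq \frac{\gamma - \bar{c_1}\omega}{\sqrt{1-|c_1|^2}}\]
lies in $\ker(I-T_b) \cap \mathbb{S}_H \subset \mathscr{A}$ and satisfies $\langle\omega,\hat\eta\rangle_H = 0$. Using the identities from the extremality step,
\[Q(\gamma - \bar{c_1}\omega) = Q\gamma - Q'_\omega(c_1\gamma) + |c_1|^2 Q\omega = f - 2|c_1|^2 f + |c_1|^2 f = (1-|c_1|^2)f,\]
so $Q\hat\eta = f$. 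Assumption~\ref{Assumption 2} applied to $(\omega,\hat\eta)$ now yields $c_2 \in \mathbb{S}_{\mathbb{K}}$ with $Q'_\omega(c_2\hat\eta) \neq 0$; but applying the extremality step to the pair $(\omega, c_2\hat\eta)$--for which $\mathbf{Re}\langle\omega, c_2\hat\eta\rangle_H = 0$--forces $Q'_\omega(c_2\hat\eta) = 0$, a contradiction. The main subtlety is the complex-scalar bookkeeping in the extremality step: because $\gamma \mapsto Q'_\omega\gamma$ is only $\R$-linear, two independent real perturbations (along $\gamma$ and $i\gamma$) are needed to pin down $Q'_\omega$ on the complex line $\mathbb{C}\gamma$, and one must verify that Gram--Schmidt based on the \emph{complex} inner product still satisfies $Q\hat\eta = f$; the real case $\mathbb{K}=\R$ is a clean simplification.
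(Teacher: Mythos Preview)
Your proof is correct and follows essentially the same strategy as the paper's: contradict extremality by producing an orthogonal unit vector $\hat\eta$ (the paper's $\omega^\perp$) with $Q\hat\eta=f$, then apply Assumption~\ref{Assumption 2} to that pair. Two remarks on packaging: (1) the detour through Propositions~\ref{p:Proposition on equivalent characterizations of the Lagrange multiplier condition} and~\ref{p:Having a Lagrange multiplier} to place $\omega,\gamma$ in $\ker(I-T_b)$ is unnecessary---your extremality step only needs $Q\tilde\omega_s^\pm\in\mathbb{B}_{X_Q^*}$, which follows directly from $\tilde\omega_s^\pm\in\mathbb{S}_H$; (2) the initial replacement of $\gamma$ by $c_0\gamma$ is never used, since the contradiction comes from $\hat\eta$, not from $\gamma$ itself. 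The paper avoids both by working directly with the orthogonal decomposition $\gamma=s\omega+t\omega^\perp$ and hand-picking the convex coefficients (equations \eqref{e:Coefficients}--\eqref{e:Sum of two terms}); your parametric family $\omega\pm s\gamma$ with weights $\lambda_\pm=\|\omega_s^\pm\|_H^2/(2(1+s^2))$ achieves the same thing more systematically and makes the reuse in the final step cleaner.
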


\begin{proof}
Seeking a contradiction, suppose $f \in \operatorname{ext}(\mathbb{B}_{X_Q^*})$ and $Q \omega = Q \gamma = f$, where $\omega,\gamma \in \mathscr{A}$ are not equal up to a rotation. Thus we may write $\gamma = s \omega + t \omega^\perp$, where $s,t \in \mathbb{K}$ with $\abs{s}^2 + \abs{t}^2 = 1$, $t \neq 0$ and $\omega^\perp \in \mathbb{S}_H \cap \{\omega\}^\perp$. We intend to show that
\begin{equation} \label{e:Aimed equality}
Q (\omega^\perp) = Q \omega.
\end{equation}
Once \eqref{e:Aimed equality} is proved, Assumption \ref{Assumption 2} yields $c \in \mathbb{S}_{\mathbb{K}}$ such that $Q'_\omega (c \omega^\perp) \neq 0$. Denoting $\psi^\pm \defeq (\omega\pm c \omega^\perp)/\sqrt{2} \in \SSS_H$ we therefore get $Q \omega = (Q \psi^+ + Q \psi^-)/2$ but $\B_{X_Q^*} \ni Q \psi^\pm = Q \omega \pm Q'_\omega (c \omega^\perp)/2 \neq Q \omega$, thereby obtaining the sought contradiction.

Formula \eqref{e:Aimed equality} clearly holds if $s = 0$, so assume $s \neq 0$. We denote $\tilde{\omega}^\perp \defeq (\abs{st}/s\bar{t}) \omega^\perp$ and $\tilde{\gamma} \defeq (\bar{s}/\abs{s}) \gamma = \abs{s} \omega + \abs{t} \tilde{\omega}^\perp \in \A$. Now $Q \omega = Q \tilde{\gamma} = \abs{s}^2 Q \omega + \abs{t}^2 Q \tilde{\omega}^\perp + \abs{s} \abs{t} Q'_\omega \tilde{\omega}^\perp$, which yields
\begin{equation} \label{e:Jacobian of perpendicular map}
Q \tilde{\omega}^\perp = Q \omega - \frac{\abs{s}}{ \abs{t}} Q'_\omega \tilde{\omega}^\perp.
\end{equation}
Since $Q \tilde{\omega}^\perp = Q \omega^\perp$, it therefore suffices to show that $Q'_\omega \tilde{\omega}^\perp = 0$.

Whenever $\delta, \epsilon \in \R$ with $\delta^2 + \epsilon^2 = 1$, we have
\begin{equation} \label{e:Q of sum}
Q (\delta \omega + \epsilon \tilde{\omega}^\perp) = \delta^2 Q_\omega + \delta \epsilon Q'_\omega \tilde{\omega}^\perp + \epsilon^2 Q \tilde{\omega}^\perp  = Q\omega +(\delta \epsilon - \epsilon^2 \abs{s} \abs{t}^{-1}) Q'_\omega \tilde{\omega}^\perp.\end{equation}
By choosing any $r > 0$ and letting $\delta_\pm = \epsilon(\abs{s}/\abs{t}\pm r)$ with $\delta_\pm^2+\epsilon^2 = 1$ we get
\begin{align}
& \delta_+ \epsilon-\epsilon^2\abs{s}/\abs{t} = - (\delta_- \epsilon-\epsilon^2\abs{s}/\abs{t}) \neq 0, \label{e:Coefficients} \\
& Q(\delta_+ \omega + \epsilon_1 \tilde{\omega}^\perp) + Q(\delta_- \omega + \epsilon_2 \tilde{\omega}^\perp) = 2 Q \omega. \label{e:Sum of two terms}
\end{align}
Since $Q \omega \in \ext(\B_{X_Q^*})$, we conclude from \eqref{e:Q of sum}--\eqref{e:Sum of two terms} that $Q'_\omega \tilde{\omega}^\perp = 0$, and now \eqref{e:Jacobian of perpendicular map} and the definition of $\tilde{\omega}^\perp$ imply \eqref{e:Aimed equality}.
\end{proof}

\begin{corollary} \label{c:Corollary on points of Frechet differentiability}
Suppose Assumptions \ref{Assumption on the bilinear operator}--\ref{Assumption on the bilinear operator 2} and \ref{Assumption 2} hold and $b \in \mathbb{S}_{X_Q}$ is a point of Fr\'{e}chet differentiability. Then $\ker(I-T_b)$ is one-dimensional.
\end{corollary}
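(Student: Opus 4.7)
The plan is to combine the fact that Fr\'echet differentiability at $b$ collapses $D(b)$ to a single extreme point with the uniqueness-up-to-scalar result of Theorem~\ref{t:Theorem on uniqueness of minimum norm solutions}.

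First, by the characterisation of Fr\'echet differentiability recalled in \textsection~\ref{Smoothness properties of norms and duality mappings}, Fr\'echet differentiability of $\norm{\cdot}_{X_Q}$ at $b$ yields $D(b) = \{f\}$ for a single $f \in \mathbb{S}_{X_Q^*}$. Next I would observe that any such exposed point is automatically extreme: if $f = (g+h)/2$ with $g,h \in \mathbb{B}_{X_Q^*}$, then $1 = \langle b, f \rangle_{X_Q-X_Q^*} = (\langle b, g\rangle + \langle b, h\rangle)/2$ combined with $\abs{\langle b, \cdot \rangle} \leq 1$ on $\mathbb{B}_{X_Q^*}$ forces $\langle b, g\rangle = \langle b, h \rangle = 1$, i.e., $g, h \in D(b) = \{f\}$. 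Hence $f \in \operatorname{ext}(\mathbb{B}_{X_Q^*})$.

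Second, Lemma~\ref{l:Krein-Milman result} lets me write $f = Q\omega_0$ for some $\omega_0 \in \mathscr{A}$. For any nonzero $\omega \in \ker(I-T_b)$, normalise to $\tilde{\omega} \defeq \omega/\norm{\omega}_H \in \mathbb{S}_H$, which still satisfies $T_b \tilde{\omega} = \tilde{\omega}$. Proposition~\ref{p:Proposition on equivalent characterizations of the Lagrange multiplier condition} then places $\tilde{\omega}$ in $\mathscr{A}$ and gives $\langle b, Q\tilde{\omega}\rangle_{X_Q-X_Q^*} = 1$, so $Q\tilde{\omega} \in D(b) = \{f\}$, i.e., $Q\tilde{\omega} = Q\omega_0$. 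Since $f$ is extreme, Theorem~\ref{t:Theorem on uniqueness of minimum norm solutions} yields $\tilde{\omega} = c\omega_0$ for some $c \in \mathbb{S}_{\mathbb{K}}$, whence $\omega \in \mathbb{K}\omega_0$. The reverse inclusion $\mathbb{K}\omega_0 \subset \ker(I-T_b)$ follows from $\mathbb{K}$-linearity of $T_b$ together with $T_b\omega_0 = \omega_0$ (which in turn uses $\omega_0 \in \mathscr{A}$ and $\langle b, Q\omega_0\rangle = 1$, via Proposition~\ref{p:Proposition on equivalent characterizations of the Lagrange multiplier condition}). Hence $\ker(I-T_b) = \mathbb{K}\omega_0$ is one-dimensional.

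I do not anticipate a substantial obstacle here: every step is either a standard Banach-space geometric fact (the Fr\'echet-differentiability/singleton-duality-set equivalence, and the observation that exposed points are extreme) or a direct invocation of a result proved earlier in the paper. The only point demanding momentary care is verifying the hypotheses of Theorem~\ref{t:Theorem on uniqueness of minimum norm solutions}, but this is immediate: $\tilde{\omega} \in \mathscr{A}$ has $Q\tilde{\omega} = f$ with $\norm{\tilde{\omega}}_H^2 = 1 = \norm{f}_{X_Q^*}$, so $\tilde{\omega}$ is a minimum-norm solution of $Q\omega = f$ and the theorem applies.
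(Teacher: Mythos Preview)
Your proof is correct and is precisely the argument the paper has in mind: the corollary is stated immediately after Theorem~\ref{t:Theorem on uniqueness of minimum norm solutions} with no separate proof, and the intended reasoning is exactly the chain you spell out (Fr\'echet differentiability $\Rightarrow$ $D(b)=\{f\}$ is a singleton, hence extreme; Lemma~\ref{l:Krein-Milman result} gives $f=Q\omega_0$; Proposition~\ref{p:Proposition on equivalent characterizations of the Lagrange multiplier condition} forces every unit vector in $\ker(I-T_b)$ into $\mathscr{A}$ with image $f$; Theorem~\ref{t:Theorem on uniqueness of minimum norm solutions} then collapses $\ker(I-T_b)$ to $\mathbb{K}\omega_0$).
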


\begin{proposition} \label{p:Assumption satisfied}
The operator $\omega \mapsto Q \omega = \abs{\S\omega}^2-\abs{\omega}^2 \colon L^2(\C,\C) \to \mathcal{H}^1(\C)$ satisfies Assumption \ref{Assumption 2}, whereas $(\omega,\gamma) \mapsto \tilde{Q}(\omega,\gamma) \defeq G'_\omega \gamma \colon L^2(\C,\C)^2 \to \H^1(\C)$ does not.
\end{proposition}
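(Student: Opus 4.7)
The proposition has two independent halves.

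For the claim that $Q$ satisfies Assumption \ref{Assumption 2}, I would reduce it to a pointwise algebraic fact. A direct expansion of $Q(\omega + tc\gamma)$ in $t \in \R$ gives
\[Q'_\omega(c\gamma) \;=\; 2\,\mathbf{Re}\bigl(\bar c\,[\S\omega\,\overline{\S\gamma}-\omega\overline{\gamma}]\bigr),\]
so it suffices to show that the complex-valued function $\Xi \defeq \S\omega\,\overline{\S\gamma}-\omega\overline{\gamma}$ does not vanish a.e. Suppose $\Xi(z_0) = 0$ at a point where $f \defeq Q\omega = Q\gamma$ is nonzero, and abbreviate $a = \S\omega(z_0)$, $b = \omega(z_0)$, $c = \S\gamma(z_0)$, $d = \gamma(z_0)$. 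Then $a\bar c = b\bar d$ yields $|a||c| = |b||d|$, while $|a|^2-|b|^2 = |c|^2-|d|^2 = f(z_0)$. A short case analysis---sign considerations on $f(z_0)$ rule out each of $a,b,c,d$ being zero, and in the remaining case substituting $|a|^2 = |b|^2 + t$, $|c|^2 = |d|^2 + t$ into $|a||c|=|b||d|$ forces $t(|b|^2+|d|^2+t) = 0$---yields $t = 0$ and hence $f(z_0) = 0$, a contradiction. Thus $\Xi$ vanishes only where $f$ does, and in particular $\Xi \not\equiv 0$ since $\|f\|_{X_Q^*} = 1$.

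For the failure of Assumption \ref{Assumption 2} for $\tilde Q$, the plan is to construct an explicit violating pair in $\tilde{\mathscr{A}}$ by exploiting the conjugation symmetry of the Jacobian. Fix any $\phi \in \mathscr{A}$ (which exists by Proposition \ref{p:Proposition on existence of Lagrange multipliers}). Through the Cauchy transform correspondence $\phi = u_{\bar z}$, the function $\overline{\S\phi} = \overline{u_z}$ equals $(\bar u)_{\bar z}$, which gives the identity $\S(\overline{\S\phi}) = \overline{\phi}$. From this, $Q(\overline{\S\phi}) = |\phi|^2 - |\S\phi|^2 = -Q\phi$, and combined with $\|\overline{\S\phi}\| = \|\S\phi\| = \|\phi\|$ this places $\overline{\S\phi}$ in $\mathscr{A}$.

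Define $\omega \defeq (\phi + \overline{\S\phi})/2$ and $\gamma \defeq (\phi - \overline{\S\phi})/2$. Using only $\S(\overline{\S\phi}) = \overline\phi$ and $\|\S\phi\| = \|\phi\|$, one checks that $|\S\omega|^2 = |\omega|^2$ and $|\S\gamma|^2 = |\gamma|^2$ pointwise, so $Q\omega = Q\gamma = 0$. Consequently $\tilde Q(\omega,\gamma) = Q(\omega+\gamma) = Q\phi$ and $\|\omega\|^2+\|\gamma\|^2 = 1$, placing both $p \defeq (\omega,\gamma)$ and $q \defeq (\gamma,\omega)$ in $\tilde{\mathscr{A}}$ with $\tilde Q p = \tilde Q q = Q\phi$. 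The condition $\omega = \pm\gamma$ would force either $\phi = 0$ or $\overline{\S\phi}=0$, each excluded by $\phi \in \mathscr{A}$, so $p$ and $q$ are not scalar multiples. A direct bilinearity computation then yields
\[\tilde Q'_p(cq) \;=\; \tilde Q(c\gamma,\gamma) + \tilde Q(\omega,c\omega) \;=\; 2\,\mathbf{Re}(c)\bigl(Q\omega + Q\gamma\bigr) \;=\; 0\]
for every $c \in \SSS_\C$, which is the required violation. The main obstacle is locating the right counterexample; the symmetry $\phi \mapsto \overline{\S\phi}$, which produces an orientation-reversed partner of $\phi$ that remains in $\mathscr{A}$, is the key insight, after which the remaining verifications are algebraic.
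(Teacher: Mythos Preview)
Your argument is correct in both parts, with one small point to tighten and a genuine difference in strategy for the second half.

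\textbf{Part 1.} Your pointwise algebraic reduction is essentially the paper's approach in different clothing: both assume $\Xi=\S\omega\,\overline{\S\gamma}-\omega\bar\gamma\equiv 0$ and deduce $Q\omega\equiv 0$. The paper writes $|\S\gamma|^2-|\gamma|^2=-\tfrac{|\S\gamma|^2}{|\omega|^2}(|\S\omega|^2-|\omega|^2)$ on $\{|\omega|\in[\delta,1/\delta]\}$ and lets $\delta\to 0$; you instead analyse the scalar system $a\bar c=b\bar d$, $|a|^2-|b|^2=|c|^2-|d|^2=t$ directly. One tiny gap: from $t(|b|^2+|d|^2+t)=0$ you jump to $t=0$, but a priori the second factor could vanish. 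Since $|b|^2+|d|^2+t=|b|^2+|c|^2>0$ once you have ruled out $b=c=0$, this is immediate---just say so.

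\textbf{Part 2.} Here your construction is genuinely different from the paper's. The paper shows a \emph{universal} failure: for \emph{every} $(\omega,\gamma)\in\tilde{\mathscr A}$ the pair $(\varphi,\psi)\defeq(-\overline{\S\gamma},\overline{\S\omega})$ lies in $\tilde{\mathscr A}$, satisfies $\tilde Q(\varphi,\psi)=\tilde Q(\omega,\gamma)$, and annihilates the derivative $\tilde Q'_{(\omega,\gamma)}[c(\varphi,\psi)]$ for all $c$. This exhibits a linear involution $L(\omega,\gamma)=(-\overline{\S\gamma},\overline{\S\omega})$ (with $L\circ L=-\mathrm{id}$) under which the whole class $\tilde{\mathscr A}$ is invariant, which is the structural point emphasised in the paper. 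Your route instead produces a \emph{single} explicit counterexample: starting from any $\phi\in\mathscr A$, the diagonal/antidiagonal pair $\omega=\tfrac12(\phi+\overline{\S\phi})$, $\gamma=\tfrac12(\phi-\overline{\S\phi})$ satisfies $Q\omega=Q\gamma=0$, so $p=(\omega,\gamma)$ and $q=(\gamma,\omega)$ violate Assumption~\ref{Assumption 2}. Both rely on the same key identity $\S\overline{\S\phi}=\bar\phi$; your construction is more concrete and self-contained, while the paper's is stronger and cleanly isolates the symmetry obstruction. One implicit step worth making explicit: the norms $\|\cdot\|_{X_Q^*}$ and $\|\cdot\|_{X_{\tilde Q}^*}$ agree (by Lemma~\ref{l:Lemma on the self-adjoint modification}, $\|\tilde T_b\|=\|T_b\|$), so $\phi\in\mathscr A$ indeed gives $\|Q\phi\|_{X_{\tilde Q}^*}=1$ and hence $p,q\in\tilde{\mathscr A}$.
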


\begin{proof}
We prove the first statement by contradiction. Suppose $\omega,\gamma \in \mathscr{A}$ satisfy $Q \omega = Q \gamma$ and $Q_\omega'(c \gamma) = 2 \mathbf{Re} (S \omega \overline{S (c \gamma)} - \omega \overline{c \gamma}) = 0$ for every $c \in \mathbb{S}_\mathbb{K}$. Thus $S \omega \overline{S \gamma} - \omega \overline{\gamma} = 0$. Fix $\delta > 0$ and denote $X_\delta \defeq \{z \in \mathbb{C} \colon \abs{\omega(z)} \in [\delta,1/\delta]\}$. In $X_\delta$ we use the formula $S \omega \overline{S \gamma} - \omega \overline{\gamma} = 0$ to write
\[\abs{\mathcal{S} \omega}^2 - \abs{\omega}^2 = \abs{\mathcal{S} \gamma}^2 - \abs{\gamma}^2 = - \frac{\abs{\mathcal{S} \gamma}^2}{\abs{\omega}^2} (\abs{\mathcal{S} \omega}^2 - \abs{\omega}^2),\]
so that $\abs{\mathcal{S} \omega}^2 - \abs{\omega}^2 = 0$ in $X_\delta$. Thus
\[\int_{\omega \neq 0} \abs{\abs{\mathcal{S} \omega}^2 - \abs{\omega}^2} = \lim_{\delta \to 0} \int_{X_\delta} \abs{\abs{\mathcal{S} \omega}^2 - \abs{\omega}^2} = 0,\]
and therefore $\int_{\omega = 0} \abs{\mathcal{S} \omega}^2 = 1 - \int_{\omega \neq 0} \abs{\mathcal{S} \omega}^2 = 1 - \int_{\omega \neq 0} \abs{\omega}^2 = 0$. We conclude that $\abs{\mathcal{S} \omega}^2 - \abs{\omega}^2 = 0$, which yields the desired contradiction.

We show a strengthened version of the second statement: whenever $(\omega,\gamma) \in \A$, there exists $(\varphi,\psi) \in \A$ such that $\tilde{Q}(\omega,\gamma) = \tilde{Q}(\varphi,\psi)$ but $\tilde{Q}'_{(\omega,\gamma)} [c(\varphi,\psi)] = 0$ for all $c \in S^1$. Given $(\omega,\gamma)$, we choose $(\varphi,\psi) = (-\overline{\S \gamma},\overline{\S\omega})$ and use the facts that $\S \colon L^2(\C,\C) \to L^2(\C,\C)$ is an isometry and $\S \overline{\S \eta} = \bar{\eta}$ for all $\eta \in L^2(\C,\C)$. 
\end{proof}

An almost identical proof applies to Example \ref{ex:Operators in terms of Hilbert transforms}, but instead of $(\varphi,\psi) = (-\overline{\S \gamma},\overline{\S \omega})$ we set $(\varphi,\psi) = (-H\omega,H\gamma)$.

\begin{proposition} \label{p:Assumption satisfied in Hilbert case}
The operator $\omega \mapsto \omega^2-(H\omega)^2 \colon L^2(\R) \to \mathcal{H}^1(\R)$ satisfies Assumption \ref{Assumption 2}, whereas $(\omega,\gamma) \mapsto \omega \gamma - H\omega H\gamma \colon L^2(\R)^2 \to \H^1(\R)$ does not.
\end{proposition}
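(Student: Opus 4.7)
The plan is to mirror the proof of Proposition~\ref{p:Assumption satisfied} almost verbatim, using that the Hilbert transform satisfies $\|H\omega\|_{L^2} = \|\omega\|_{L^2}$ and $H^2 = -I$ in place of the Beurling-transform identities employed there, and noting that now $\mathbb{K} = \R$, so $\mathbb{S}_\mathbb{K} = \{\pm 1\}$ and all complex conjugations drop out of the picture.

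For the first statement I would argue by contradiction. Suppose $\omega,\gamma \in \mathscr{A}$ satisfy $Q_1\omega = Q_1\gamma$ and $(Q_1)'_\omega\gamma = 2(\omega\gamma - H\omega H\gamma) = 0$; in particular $\omega\gamma = H\omega H\gamma$ pointwise a.e. Squaring this and combining with the identity $\omega^2 - (H\omega)^2 = \gamma^2 - (H\gamma)^2$ on the set $X_\delta \defeq \{x \in \R \colon |\omega(x)|\in[\delta,1/\delta]\}$, I solve for $\gamma^2$ to obtain, just as in Proposition~\ref{p:Assumption satisfied}, the factored identity
\[
\bigl[\omega^2 - (H\omega)^2\bigr]\bigl(1 + (H\gamma)^2/\omega^2\bigr) = 0.
\]
Since the second factor is strictly positive, $Q_1\omega = 0$ on $X_\delta$; letting $\delta \downarrow 0$ gives $Q_1\omega = 0$ a.e.\ on $\{\omega \neq 0\}$. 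On $\{\omega = 0\}$, the $L^2$-isometry of $H$ and $\|\omega\|_{L^2}=1$ force
$\int_{\{\omega=0\}}(H\omega)^2 = \|H\omega\|_{L^2}^2 - \int_{\{\omega\neq 0\}}\omega^2 = 1-1 = 0$,
so $H\omega = 0$ there and hence $Q_1\omega = 0$ a.e., contradicting $Q_1\omega \in \mathbb{S}_{X_Q^*}$.

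For the second statement I would follow the hint verbatim: given $(\omega,\gamma)\in\mathscr{A}$, set $(\varphi,\psi) \defeq (-H\omega, H\gamma)$. The isometry of $H$ places $(\varphi,\psi)$ in $\mathbb{S}_{L^2(\R)\times L^2(\R)}$, and using $H^2 = -I$ one checks directly
$\tilde Q(\varphi,\psi) = (-H\omega)(H\gamma) - H(-H\omega)H(H\gamma) = -H\omega H\gamma - \omega(-\gamma) = \tilde Q(\omega,\gamma)$,
so $(\varphi,\psi)\in\mathscr{A}$. Expanding the Gâteaux derivative of the symmetric bilinear form and using $H(-H\omega)=\omega$, $H(H\gamma)=-\gamma$,
\[
\tilde Q'_{(\omega,\gamma)}(\varphi,\psi) = \omega H\gamma - H\omega\gamma + H\omega\gamma - \omega H\gamma = 0,
\]
and by linearity the same holds with $c(\varphi,\psi)$ for $c \in \{\pm 1\}$. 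Thus $(\omega,\gamma)$ and $(\varphi,\psi)$ witness the failure of Assumption~\ref{Assumption 2}.

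The only substantive step is the measure-theoretic propagation of $Q_1\omega = 0$ from $\{\omega \neq 0\}$ onto $\{\omega = 0\}$ in the first part; the isometry of $H$ is precisely the structural ingredient that makes this go through, and it plays the same role here that the isometry of $\S$ (together with $\S\overline{\S\eta}=\bar\eta$) played in Proposition~\ref{p:Assumption satisfied}. No new obstacle beyond this transfer arises.
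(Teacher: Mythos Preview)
Your proposal is correct and follows exactly the route the paper indicates: it reproduces the proof of Proposition~\ref{p:Assumption satisfied} with $H$ in place of $\S$, using $\|H\omega\|_{L^2}=\|\omega\|_{L^2}$ and $H^2=-I$, and for the second statement uses precisely the substitution $(\varphi,\psi)=(-H\omega,H\gamma)$ that the paper prescribes. The computations (the factored identity on $X_\delta$, the mass-balance on $\{\omega=0\}$, and the vanishing of $\tilde Q'_{(\omega,\gamma)}(\varphi,\psi)$) are all carried out correctly.
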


\begin{remark} \label{r:Symmetries}
The proof of Proposition \ref{p:Assumption satisfied} uses a central difference of the operator $\omega \mapsto Q \omega = \abs{\S\omega}^2-\abs{\omega}^2$ and its G\^ateaux derivative $\tilde{Q}$. Given $\omega \in \A$, the set $\{\varphi \in \A \colon Q \varphi = Q \omega\}$ sometimes consists only of rotations of $\omega$, whereas all the corresponding sets $\{(\varphi,\psi) \in \tilde{\A} \colon \tilde{Q}(\varphi,\psi) = \tilde{Q}(\omega,\gamma)\}$ are invariant under the linear transformation $L(\omega,\gamma) \defeq (-\overline{S \gamma},\overline{\S \omega})$. Note that $L \circ L = -\tp{id}$. Similar remarks apply to the sets of minimum norm solutions for a given datum $f \in \H^1(\C)$.
\end{remark}

\section{The proof of Theorem \ref{t:Theorem on duality mapping}} \label{s:Proof of Theorem on duality mapping}

\subsection{Claims (i)--(iii)}
In this subsection, Assumptions \ref{Assumption on the bilinear operator}--\ref{Assumption on the bilinear operator 2} are in effect. Before recalling Claims (i)--(iii) we note the following consequence of Corollary \ref{c:Collection of conditions}.
\begin{lemma} \label{l:Lemma on Euclidean unit sphere}
Let $b \in \mathbb{S}_{X_Q}$. Then $\{f \in \mathscr{A} \colon Q f \in D(b)\} = \ker(I-T_b) \cap \mathbb{S}_H$.
\end{lemma}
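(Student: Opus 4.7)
The plan is to recognize that Lemma~6.1 is essentially an unpacking of the equivalence (ii) $\Leftrightarrow$ (iii) in Proposition~\ref{p:Proposition on equivalent characterizations of the Lagrange multiplier condition}, viewed under the canonical embedding $X_Q \hookrightarrow X_Q^{**}$ (which is isometric by Lemma~\ref{l:Isometric isomorphism}). I would split the proof into the two set-theoretic inclusions and use Definition~\ref{Definition of the quadratic quantity} together with the identity $\norm{T_b}_{H\to H} = \norm{b}_{X_Q}$ throughout.

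For the forward inclusion, suppose $f \in \mathscr{A}$ satisfies $Q f \in D(b)$. By definition of $\mathscr{A}$ we have $f \in \mathbb{S}_H$, and by definition of $D(b)$ we have $\langle b, Q f\rangle_{X_Q-X_Q^*} = 1$. Using Definition~\ref{Definition of the quadratic quantity} this rewrites as $\langle T_b f, f\rangle_H = 1$. Since $\norm{T_b}_{H\to H} = \norm{b}_{X_Q} = 1$ and $\norm{f}_H = 1$, the Cauchy--Schwarz inequality must be an equality; strict convexity of $H$ then forces $T_b f = f$, i.e.\ $f \in \ker(I - T_b) \cap \mathbb{S}_H$. (Alternatively, one simply invokes (ii) $\Rightarrow$ (iii) of Proposition~\ref{p:Proposition on equivalent characterizations of the Lagrange multiplier condition}.)

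For the reverse inclusion, let $f \in \ker(I - T_b) \cap \mathbb{S}_H$. Then
\[
\langle b, Q f\rangle_{X_Q-X_Q^*} = \langle T_b f, f\rangle_H = \norm{f}_H^2 = 1,
\]
so in particular $\norm{Q f}_{X_Q^*} \ge 1$. On the other hand, the definition of $\norm{\cdot}_{X_Q^*}$ gives the a priori bound
\[
\norm{Q f}_{X_Q^*} = \sup_{\norm{b'}_{X_Q}=1} \langle b', Q f\rangle_{X_Q-X_Q^*} = \sup_{\norm{T_{b'}}_{H\to H}=1} \langle T_{b'} f, f\rangle_H \le \norm{f}_H^2 = 1.
\]
Hence $\norm{Qf}_{X_Q^*} = 1$, so $f \in \mathscr{A}$ and $Qf \in D(b)$.

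There is no real obstacle here; the content lies entirely in Proposition~\ref{p:Proposition on equivalent characterizations of the Lagrange multiplier condition} and the a priori estimate $\norm{Qf}_{X_Q^*} \le \norm{f}_H^2$. The point of isolating the lemma is pragmatic: it recasts the Lagrange multiplier picture in the language of the duality mapping $D$, which is precisely the vocabulary needed to attack the six claims of Theorem~\ref{t:Theorem on duality mapping} in the remainder of \textsection\ref{s:Proof of Theorem on duality mapping}.
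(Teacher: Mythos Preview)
Your proof is correct and matches the paper's treatment: the paper states Lemma~\ref{l:Lemma on Euclidean unit sphere} without a written proof, calling it a consequence of earlier material, and your argument via Proposition~\ref{p:Proposition on equivalent characterizations of the Lagrange multiplier condition} (ii)$\Leftrightarrow$(iii) together with the last line of that proposition is exactly the intended derivation. (The paper's in-text citation to Corollary~\ref{c:Collection of conditions} appears to be a slip for Proposition~\ref{p:Proposition on equivalent characterizations of the Lagrange multiplier condition}; your identification of the relevant input is the right one.)
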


\begin{claim}
For every $b \in \mathbb{S}_{X_Q}$, $D(b) \cap Q(\mathscr{A})$ contains $\tp{ext}(D(b))$ and is path-connected.
\end{claim}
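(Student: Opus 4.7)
The plan is to split the statement into its two assertions and reduce each to a prior result, so that the entire argument boils down to routine convex/spectral analysis.

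For the inclusion $\operatorname{ext}(D(b)) \subseteq Q(\mathscr{A})$, the strategy is first to show the stronger statement $\operatorname{ext}(D(b)) \subseteq \operatorname{ext}(\mathbb{B}_{X_Q^*})$, which reflects the fact that $D(b)$ is a \emph{face} of the unit ball. Concretely, if $f \in D(b)$ decomposes as $f = (g+h)/2$ with $g, h \in \mathbb{B}_{X_Q^*}$, then the averaging estimate $1 = \langle b,f\rangle_{X_Q-X_Q^*} \le \tfrac{1}{2}(\langle b,g\rangle + \langle b,h\rangle) \le \tfrac{1}{2}(\|g\|_{X_Q^*}+\|h\|_{X_Q^*}) \le 1$ forces equality at every step, so $g, h \in D(b)$. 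Hence any $f$ extreme in $D(b)$ is extreme in $\mathbb{B}_{X_Q^*}$, and Lemma~\ref{l:Krein-Milman result} gives $\operatorname{ext}(D(b)) \subseteq Q(\mathscr{A})$. Combined with $\operatorname{ext}(D(b)) \subseteq D(b)$, this yields the desired inclusion.

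For path-connectedness of $D(b) \cap Q(\mathscr{A})$, I would invoke Lemma~\ref{l:Lemma on Euclidean unit sphere} to rewrite this set as $Q(V \cap \mathbb{S}_H)$, where $V \defeq \ker(I - T_b)$. Since $T_b$ is compact and self-adjoint (by Assumptions \ref{Assumption on the bilinear operator}--\ref{Assumption on the bilinear operator 2}) with $\|T_b\|_{H \to H} = 1$, the spectral theorem for compact self-adjoint operators shows that $V$ is a finite-dimensional $\mathbb{K}$-linear subspace of $H$, and it is nonzero by Proposition~\ref{p:Proposition on existence of Lagrange multipliers}. The unit sphere of $V$ is then a Euclidean sphere of real dimension at least one in every case \emph{except} $\mathbb{K} = \R$ with $\dim_\R V = 1$, so in all generic cases $V \cap \mathbb{S}_H$ is path-connected and $Q(V \cap \mathbb{S}_H)$ inherits path-connectedness as its continuous image.

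The only point that needs any care is the edge case $\mathbb{K} = \R$, $\dim_\R V = 1$, where $V \cap \mathbb{S}_H = \{\pm \omega_0\}$ is disconnected; but since $Q$ is quadratic, $Q(-\omega_0) = \langle T_b(-\omega_0),-\omega_0\rangle_H = \langle T_b \omega_0, \omega_0\rangle_H = Q(\omega_0)$, so the image collapses to a single point. This resolves the only genuine obstacle, and beyond it the proof is essentially a transcription of Lemmas~\ref{l:Krein-Milman result} and~\ref{l:Lemma on Euclidean unit sphere}.
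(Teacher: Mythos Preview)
Your proof is correct and follows the same approach as the paper: invoke Lemma~\ref{l:Krein-Milman result} for the extreme-point inclusion and Lemma~\ref{l:Lemma on Euclidean unit sphere} plus continuity of $Q$ for path-connectedness. You are in fact more careful than the paper in two places: you make explicit the face argument showing $\operatorname{ext}(D(b)) \subseteq \operatorname{ext}(\mathbb{B}_{X_Q^*})$ (the paper simply says Lemma~\ref{l:Krein-Milman result} applies ``directly''), and you address the edge case $\mathbb{K}=\R$, $\dim_\R \ker(I-T_b)=1$, where the preimage sphere is disconnected but the image under $Q$ is a single point; the paper asserts path-connectedness of $\ker(I-T_b)\cap\mathbb{S}_H$ without flagging this case.
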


\begin{proof}
Lemma \ref{l:Krein-Milman result} directly implies that $D(b) \cap Q(\mathscr{A})$ contains $\operatorname{ext}(D(b))$. The path-connectedness of $D(b) \cap Q(\mathscr{A})$ follows from Lemma \ref{l:Lemma on Euclidean unit sphere}, the path-connectedness of $\ker(I-T_b) \cap \mathbb{S}_H$ and the continuity of $Q$.
\end{proof}

\begin{claim}
For every $b \in \mathbb{S}_{X_Q}$, the convex set $D(b)$ has finite affine dimension.
\end{claim}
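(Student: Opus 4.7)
The plan is to combine the spectral theorem for compact self-adjoint operators with the Krein--Milman structure of $D(b)$ in the separable dual $X_Q^*$, thereby reducing the problem from an infinite- to a finite-dimensional one in two steps.

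First, I observe that a point $b \in \mathbb{S}_{X_Q}$ is by definition an element of $X$, so Assumption \ref{Assumption on the bilinear operator}(ii) makes $T_b \colon H \to H$ compact, and Assumption \ref{Assumption on the bilinear operator 2} makes it self-adjoint. The spectral theorem for compact self-adjoint operators then forces the eigenspace $V \defeq \ker(I - T_b)$ to be finite-dimensional over $\mathbb{K}$. By Lemma \ref{l:Lemma on Euclidean unit sphere}, every $f \in D(b) \cap Q(\A)$ is of the form $f = Q\omega$ for some $\omega \in V \cap \mathbb{S}_H$. Since $Q$ is a (real) quadratic form, polarisation shows that $Q|_V$ takes values in the finite-dimensional real subspace
\[W \defeq \operatorname{span}_{\R}\{Q\omega \colon \omega \in V\} \subset X_Q^*,\]
and hence $D(b) \cap Q(\A) \subset W$.

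Next, I invoke the already established Claim (ii) of Theorem \ref{t:Theorem on duality mapping}, which gives $\operatorname{ext}(D(b)) \subset D(b) \cap Q(\A) \subset W$. Since $D(b)$ is a closed, bounded, convex subset of the separable dual $X_Q^*$, the Bessaga--Pe{\l}czynski Theorem (Theorem \ref{Bessaga-Pelczynski theorem}) yields $D(b) = \overline{\operatorname{co}}(\operatorname{ext}(D(b)))$. As $W$ is a finite-dimensional (hence closed and convex) subspace containing $\operatorname{ext}(D(b))$, we conclude $D(b) \subset W$, so $D(b)$ lies in a finite-dimensional affine subspace.

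The argument is essentially mechanical once the two ingredients---the compact self-adjoint spectral theorem and Bessaga--Pe{\l}czynski---are identified, and I do not expect any real obstacle. The only place where mild care is needed is the polarisation step: when $\mathbb{K} = \mathbb{C}$, one must regard $Q$ as a real-quadratic (rather than sesquilinear) form and $V$ as a real vector space of twice the complex dimension, but this does not affect the finite-dimensionality of $W$.
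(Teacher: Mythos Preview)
Your proof is correct and follows essentially the same route as the paper: both use Lemma \ref{l:Lemma on Euclidean unit sphere} together with the finite-dimensionality of $\ker(I-T_b)$ (from compactness of $T_b$) and polarisation to trap $D(b)\cap Q(\A)$ in a finite-dimensional span, then use $\operatorname{ext}(D(b))\subset D(b)\cap Q(\A)$ to conclude. You are in fact slightly more explicit than the paper in invoking Bessaga--Pe{\l}czynski to pass from $\operatorname{ext}(D(b))\subset W$ to $D(b)\subset W$; the paper leaves this step implicit in the phrase ``it suffices to show that $D(b)\cap Q(\A)$ has finite affine dimension.''
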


\begin{proof}
Since $D(b) \cap Q(\mathscr{A})$ contains $\operatorname{ext}(D(b))$, it suffices to show that $D(b) \cap Q(\mathscr{A})$ has finite affine dimension. By Lemma \ref{l:Lemma on Euclidean unit sphere}, $D(b) \cap Q(\mathscr{A}) \subset Q(\ker(I-T_b))$. Since $K_b$ is compact and self-adjoint, $\ker(I-T_b)$ is finite-dimensional, and so $Q(\ker(I-T_B))$ has finite affine dimension. Precisely, when $\{\omega_1,\ldots,\omega_n\}$ is an orthonormal basis of $\ker(I-T_b)$ and $Q \omega \in D(b) \cap Q(\mathscr{A})$, we may write $Q \omega \in \operatorname{span} \cup_{j,k=1}^n \{Q'_{\omega_j} \omega_k\}$ (when $\mathbb{K} = \mathbb{R}$) or $Q \omega \in \operatorname{span} \cup_{j,k=1}^n \{\mathbf{Re}(Q'_{\omega_j} \omega_k), \mathbf{Re}(Q'_{\omega_j} (i \omega_k))\}$ (when $\mathbb{K} = \mathbb{C}$). 
\end{proof}

\begin{claim} \label{Claim on equivalent topologies}
The norm and relative weak$^*$ topologies coincide in $Q (\mathscr{A})$.
\end{claim}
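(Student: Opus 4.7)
The plan is to show that weak$^*$ convergence within $Q(\mathscr{A})$ forces norm convergence; the reverse implication is automatic. Since $X^*$ is separable and $X_Q^*$ coincides with $X^*$ as a set with equivalent norms (the norms $\norm{\cdot}_X$ and $\norm{\cdot}_{X_Q}$ are equivalent by Assumption \ref{Assumption on the bilinear operator}), the weak$^*$ topology on the bounded set $Q(\mathscr{A}) \subset \mathbb{S}_{X_Q^*}$ is metrizable. Hence it suffices to work sequentially and to verify that every subsequence of a weak$^*$-convergent sequence in $Q(\mathscr{A})$ admits a further subsequence converging in norm to the same limit.

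Fix such a sequence $Q\omega_j \overset{*}{\rightharpoonup} f$, where $\omega_j \in \mathscr{A}$ and $f = Q\omega$ for some $\omega \in \mathscr{A}$. Given an arbitrary subsequence, I would use weak compactness of $\mathbb{B}_H$ to extract a further subsequence $\omega_{j_k} \rightharpoonup \tilde\omega$ in $H$. Crucially, $\tilde\omega$ need not equal the chosen preimage $\omega$, but the weak-to-weak$^*$ continuity of $Q$ (Definition \ref{Definition of the quadratic quantity}) still forces $Q\tilde\omega = f$. The elementary inequality $\norm{Q\theta}_{X_Q^*} \leq \norm{\theta}_H^2$, which follows from the identity $\norm{T_b}_{H \to H} = \norm{b}_{X_Q}$, combined with weak lower semicontinuity of the $H$-norm, yields
\[1 = \norm{Q\tilde\omega}_{X_Q^*} \leq \norm{\tilde\omega}_H^2 \leq \liminf_k \norm{\omega_{j_k}}_H^2 = 1,\]
so $\tilde\omega \in \mathscr{A}$ and in addition $\norm{\omega_{j_k}}_H \to \norm{\tilde\omega}_H$.

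At this point, the Radon-Riesz (Kadec-Klee) property of the Hilbert space $H$ upgrades the weak convergence $\omega_{j_k} \rightharpoonup \tilde\omega$ to strong convergence $\omega_{j_k} \to \tilde\omega$ in $H$, and the norm-to-norm continuity of $Q$ (Definition \ref{Definition of the quadratic quantity}) delivers $Q\omega_{j_k} \to Q\tilde\omega = f$ in $\norm{\cdot}_{X_Q^*}$, closing the subsequence argument. I do not expect a serious obstacle: the heart of the argument is a standard Radon-Riesz trick, and the only mildly delicate point is that different subsequential weak limits $\tilde\omega$ need not coincide with a prescribed preimage $\omega$ of $f$, but they all automatically lie in $\mathscr{A}$ thanks to the sandwich of inequalities above, which is exactly what the Radon-Riesz step needs.
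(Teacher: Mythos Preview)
Your proof is correct and follows essentially the same route as the paper: extract a weakly convergent subsequence in $H$, identify its $Q$-image via weak-to-weak$^*$ continuity, use the sandwich $1=\norm{Q\tilde\omega}_{X_Q^*}\le\norm{\tilde\omega}_H^2\le\liminf\norm{\omega_{j_k}}_H^2=1$ to force norm convergence in $H$, and apply the Radon--Riesz property and norm continuity of $Q$. The paper packages this as a contradiction argument rather than your direct subsequence-of-subsequence formulation, and leaves the metrizability justification implicit (it is invoked elsewhere, in the proof of Lemma~\ref{l:Krein-Milman result}); one small nitpick is that metrizability of the weak$^*$ topology on bounded sets uses separability of $X$ (the predual), which follows from your stated hypothesis that $X^*$ is separable but is not quite the same thing.
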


\begin{proof}
Suppose $\omega_j,\omega \in \mathscr{A}$ and $Q \omega_j \overset{*}{\rightharpoonup} Q \omega$. Seeking a contradiction, suppose that for a subsequence, $\liminf_{j \to \infty} \norm{Q \omega_j - Q \omega}_{X_Q^*} > 0$. By passing to a further subsequence, $\omega_j \rightharpoonup \gamma \in \mathbb{B}_H$. Now $Q \omega_j \overset{*}{\rightharpoonup} Q \gamma$ and thus $Q \gamma = Q \omega$. This implies that $1 = \norm{Q \gamma}_{X_Q^*} \le \norm{\gamma}_H \le 1$. Thus $\omega_j \rightharpoonup \gamma$ and $\norm{\omega_j}_H \to \norm{\gamma}_H$, giving $\norm{\omega_j-\gamma}_H \to 0$ and so $\norm{Q \omega_j - Q \gamma}_H \to 0$, and the latter yields the sought contradiction.
\end{proof}

\subsection{Claims (iv)--(vi)}
We recall and prove Claims (iv)--(vi) below. In this subsection, we assume that Assumptions \ref{Assumption on the bilinear operator}, \ref{Assumption on the bilinear operator 2} and \ref{Assumption 2} hold.

\begin{claim}
For every $b$ in a relatively open dense subset of $\mathbb S_{X_Q}$, $D(b) = \{Q\omega\}$ for some $\omega \in \mathscr{A}$ and $\norm{\cdot}_{X_Q}$ is Fr\'{e}chet differentiable at $b$.
\end{claim}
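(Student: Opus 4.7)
The plan is to show that
\[V \defeq \{b \in \SSS_{X_Q} : \dim_{\mathbb{K}} \ker(I - T_b) = 1\}\]
is the desired relatively open dense subset of $\SSS_{X_Q}$, and that at each $b \in V$ the duality map $D(b)$ is the singleton $\{Q\omega_b\}$ where $\omega_b \in \SSS_H$ spans $\ker(I-T_b)$. Openness of $V$ follows from spectral perturbation theory for compact self-adjoint operators: for $b \in V$ the top eigenvalue $1$ of $T_b$, guaranteed by Proposition \ref{p:Proposition on existence of Lagrange multipliers} and Assumption \ref{Assumption on the bilinear operator 2}(ii), is simple and hence separated from the rest of the spectrum by some $\delta > 0$; for $b' \in \SSS_{X_Q}$ with $\norm{b'-b}_{X_Q} < \delta/2$, Weyl's inequality and $\norm{T_{b'}}_{H \to H} = 1$ force the top eigenvalue of $T_{b'}$ to equal $1$ and still be isolated by at least $\delta/2$, whence $b' \in V$.

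For the singleton duality, I would use that by Lemma \ref{l:Lemma on Euclidean unit sphere} combined with the invariance $Q(c\omega) = \abs{c}^2 Q\omega = Q\omega$ for $c \in \SSS_{\mathbb{K}}$, we have $D(b) \cap Q(\A) = \{Q\omega_b\}$. Since $D(b)$ is a face of $\B_{X_Q^*}$, its extreme points are extreme points of $\B_{X_Q^*}$, and by Lemma \ref{l:Krein-Milman result} they lie in $Q(\A)$, forcing $\ext D(b) = \{Q\omega_b\}$. The Bessaga-Pełczyński Theorem \ref{Bessaga-Pelczynski theorem}, applied to $D(b)$ inside the separable dual $X_Q^*$, then yields $D(b) = \overline{\operatorname{co}}(\ext D(b)) = \{Q\omega_b\}$.

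To upgrade to Fréchet differentiability on $V$, I would invoke the contour-integral formula $P_b = \frac{1}{2\pi i}\oint_\Gamma (zI-T_b)^{-1} dz$ for the rank-one spectral projection onto $\ker(I-T_b)$, where $\Gamma$ is a small circle around $1$; this $P_b$ is norm-continuous in $b$ throughout $V$. For $b_j \to b_0$ in $V$, the choice $\omega_{b_j} \defeq P_{b_j} \omega_{b_0}/\norm{P_{b_j}\omega_{b_0}}_H$ gives $\omega_{b_j} \to \omega_{b_0}$ in $H$ and hence $Q\omega_{b_j} \to Q\omega_{b_0}$ in $X_Q^*$ by continuity of $Q$; thus the singleton-valued $D$ is norm-to-norm continuous on $V$, and the characterisation of Fréchet differentiability via singleton plus upper semicontinuity of $D$, recalled immediately after Theorem \ref{t:GGS}, concludes the differentiability claim on $V$. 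Density of $V$ in $\SSS_{X_Q}$ follows from the Asplund-Lindenstrauss Theorem \ref{Asplund-Lindenstrauss theorem}, applicable since $X_Q^* \cong X^*$ is separable: the set $G$ of Fréchet differentiability points of $\norm{\cdot}_{X_Q}$ is a dense $G_\delta$ in $X_Q$, scale-invariant off the origin, so $G \cap \SSS_{X_Q}$ is dense in $\SSS_{X_Q}$, and Corollary \ref{c:Corollary on points of Frechet differentiability}, which uses Assumption \ref{Assumption 2}, places $G \cap \SSS_{X_Q}$ inside $V$.

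I expect the main obstacle to be the perturbation-theoretic step: verifying that the normalised top eigenvector, modulo the $\SSS_{\mathbb{K}}$-phase that $Q$ quotients out, depends norm-continuously on $b$ throughout $V$---this is standard Riesz projection material but requires care to pass from mere singleton-ness of $D$ to genuine Fréchet differentiability. The remaining ingredients are direct consequences of the Banach space geometric apparatus developed in \textsection \ref{Banach space geometric preliminaries} and \textsection \ref{s:The proof of Theorem on uniqueness of minimum norm solutions}.
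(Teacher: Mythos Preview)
Your proof is correct and complete; it takes a different route from the paper's.

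The paper argues openness by a soft sequential compactness argument: assuming $b$ is Fr\'echet differentiable (hence $\dim\ker(I-T_b)=1$ by Corollary \ref{c:Corollary on points of Frechet differentiability}) but $b_j\to b$ are not, it picks orthonormal pairs $\omega_j,\gamma_j\in\ker(I-T_{b_j})$, passes to weak limits, upgrades to norm convergence via $\langle Q\omega,b\rangle=1$, and derives $0=\langle\omega_j,\gamma_j\rangle\to\bar c\neq 0$. You replace this by explicit spectral perturbation: Weyl's inequality for the eigenvalue gap gives openness of $V$, and norm-continuity of the Riesz projection $P_b$ gives continuity of $b\mapsto Q\omega_b$ on $V$, hence Fr\'echet differentiability there via the characterisation after Theorem \ref{t:GGS}. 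Your argument is more machinery-heavy but has the advantage of being self-contained: it proves \emph{directly} that every $b\in V$ is a Fr\'echet point, whereas the paper's proof, as written, uses the implication ``$b_j$ not Fr\'echet $\Rightarrow\dim\ker(I-T_{b_j})\ge 2$'', which is the \emph{converse} of Corollary \ref{c:Corollary on points of Frechet differentiability} and is only justified once one knows (as you establish) that one-dimensionality of the kernel already forces Fr\'echet differentiability. In effect your argument also supplies this missing converse. Your route also makes transparent that the relatively open dense set in question is exactly $V=\{b\in\SSS_{X_Q}:\dim_{\mathbb K}\ker(I-T_b)=1\}$, a fact the paper only notes implicitly via Remark \ref{r:Higher dimensions}.

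One small comment: in your openness step you should note that $\norm{b'-b}_{X_Q}=\norm{T_{b'}-T_b}_{H\to H}$ by the very definition of $\norm{\cdot}_{X_Q}$, so Weyl's inequality applies immediately; and that $\lambda_1(T_{b'})=1$ for $b'\in\SSS_{X_Q}$ follows because $T_{b'}$ is compact self-adjoint with $\norm{T_{b'}}=\sup_{\norm{\omega}=1}\langle T_{b'}\omega,\omega\rangle$ by Assumption \ref{Assumption on the bilinear operator 2}(ii), so the norm is attained as a positive eigenvalue. These are routine, but worth stating.
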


\begin{proof}
By Theorem \ref{Asplund-Lindenstrauss theorem}, $\norm{\cdot}_{X_Q}$ is Fr\'echet differentiable in a dense subset of $\mathbb{S}_{X_Q}$. Seeking a contradiction, suppose $b$ is a point of Fr\'echet differentiability but the points $b_j \to b$, $b_j \in \mathbb S_{X_Q}$, are not. Thus the subspaces $\tp{ker}(I-T_{b_j})$ are at least two-dimensional but, by Corollary \ref{c:Corollary on points of Frechet differentiability}, $\tp{dim}(\tp{ker}(I-T_b)) = 1$.

For every $j \in \N$ choose $\omega_j, \gamma_j \in \ker(I-T_b) \cap \SSS_H$ such that $\langle \omega_j, \gamma_j \rangle_H = 0$. Since $\mathbb B_H$ is weakly compact, we may assume that $\omega_j \rightharpoonup \omega$ and $\gamma_j \rightharpoonup \gamma$ in $H$. Then $Q \omega_j \overset{*}{\rightharpoonup} Q \omega$ and $Q \gamma_j \overset{*}{\rightharpoonup} Q \gamma$ in $X_Q^*$.

Since $b_j \to b$ in $X_Q$ and $Q \omega_j \overset{*}{\rightharpoonup} Q \omega$ and $Q \gamma_j \overset{*}{\rightharpoonup} Q \gamma$ in $X_Q^*$, we conclude that
\begin{align*}
& \langle Q \omega, b \rangle_{X_Q^*-X_Q} = \lim_{j \to \infty} \langle Q \omega_j, b_j \rangle_{X_Q^*-X_Q} = 1, \\
& \langle Q \gamma, b \rangle_{X_Q^*-X_Q} = \lim_{j \to \infty} \langle Q \gamma_j, b_j \rangle_{X_Q^*-X_Q} = 1.
\end{align*}
Since $D(b)$ is a singleton by assumption, we obtain $Q \omega = Q \gamma$. By Corollary \ref{c:Corollary on points of Frechet differentiability},
\begin{equation} \label{Equality up to rotation}
\gamma = c \omega \quad \text{for some } c \in \mathbb{S}_{\mathbb{K}}.
\end{equation}
Furthermore,
\[1 = \lim_{j \to \infty} \norm{\omega_j}_H^2 \ge \norm{\omega}_H^2 \ge \|Q \omega\|_{X_Q^*} \ge \langle Q \omega, b \rangle_{X_Q^*-X_Q} = 1.\]
We thus have $\omega_j \rightharpoonup \omega$ and $\|\omega_j\|_H \to \|\omega\|_H$, whereby $\|\omega_j-\omega\|_H \to 0$. Similarly, $\|\gamma_j-\gamma\|_H \to 0$. We conclude, via \eqref{Equality up to rotation}, that
\[0 = \langle \omega_j, \gamma_j \rangle \to \langle \omega, \gamma \rangle = \langle \omega, c \omega \rangle = \overline{c},\]
which yields a contradiction.
\end{proof}

\begin{remark} \label{r:Higher dimensions}
Note that a straightforward modification of the proof above proves the relative openness in $\SSS_{X_Q}$ of $\{b \in \SSS_{X_Q} \colon \dim(\ker(I-K_b)) \le n\}$ for every $n \in \N$.
\end{remark}

\begin{claim}
$D \colon \mathbb{S}_{X_Q} \to 2^{\mathbb{S}_{X_Q^*}}$ is a cusco map.
\end{claim}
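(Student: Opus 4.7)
The plan is to verify separately that $D(b)$ is a nonempty, convex, compact subset of $\mathbb{S}_{X_Q^*}$ and that $D$ is norm-to-norm upper semicontinuous. Nonemptiness is immediate from Proposition \ref{p:Proposition on existence of Lagrange multipliers}, convexity follows by writing $D(b) = \{h \in \mathbb{B}_{X_Q^*} \colon \langle b, h \rangle_{X_Q-X_Q^*} = 1\}$, and compactness combines the finite affine dimensionality from claim (i) with norm-closedness and boundedness of $D(b)$.

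For upper semicontinuity at a fixed $b \in \mathbb{S}_{X_Q}$, I would take sequences $b_j \to b$ in $\mathbb{S}_{X_Q}$ and $h_j \in D(b_j)$ and extract a norm-convergent subsequence whose limit lies in $D(b)$. The first move is to establish a uniform dimension bound: with $n_0 \defeq \dim \ker(I - T_b)$, Remark \ref{r:Higher dimensions} yields $\dim \ker(I - T_{b_j}) \leq n_0$ for all sufficiently large $j$, and the affine dimension estimate from the proof of claim (i) then bounds $\dim D(b_j)$ by some $N_0 = N_0(n_0)$ uniformly in $j$.

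Combining claim (ii) with Lemma \ref{l:Lemma on Euclidean unit sphere} gives $\operatorname{ext}(D(b_j)) \subset Q(\ker(I - T_{b_j}) \cap \mathbb{S}_H)$. Krein-Milman applied to the compact convex set $D(b_j)$ and Carath\'eodory's theorem then yield representations
\[h_j = \sum_{k=1}^{N_0+1} \alpha_{j,k} Q \omega_{j,k}, \quad \alpha_{j,k} \geq 0, \quad \sum_k \alpha_{j,k} = 1, \quad \omega_{j,k} \in \ker(I - T_{b_j}) \cap \mathbb{S}_H.\]
Passing to subsequences, I would arrange that $\alpha_{j,k} \to \alpha_k$ and, for each $k$ in turn, $\omega_{j,k} \rightharpoonup \omega_k$ in $H$. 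The critical compactness input is that $T_{b_j} \to T_b$ in operator norm by Assumption \ref{Assumption on the bilinear operator}(i) while $T_b$ is compact by Assumption \ref{Assumption on the bilinear operator}(ii); thus $\omega_{j,k} = T_{b_j} \omega_{j,k} \to T_b \omega_k$ in norm, giving $\omega_{j,k} \to \omega_k \in \ker(I - T_b) \cap \mathbb{S}_H$. Norm continuity of $Q$ together with the uniform bound on the number of summands finally yields $h_j \to \sum_k \alpha_k Q \omega_k \in D(b)$ in norm.

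The main technical hurdle is securing the uniform dimension bound $N_0$; without it the Carath\'eodory decomposition produces representations of unbounded length, and no finite extraction argument goes through. This bound rests on Remark \ref{r:Higher dimensions}, which is why Assumption \ref{Assumption 2} (entering via the Fr\'echet differentiability argument of claim (iv)) is essential to the proof.
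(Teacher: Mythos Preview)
Your proof is correct and follows essentially the same approach as the paper: both use the uniform dimension bound from Remark \ref{r:Higher dimensions} to decompose each $h_j$ (respectively $f_j$) as a bounded-length convex combination $\sum_k \lambda_j^k\, Q\omega_j^k$ with $\omega_j^k \in \ker(I-T_{b_j}) \cap \mathbb{S}_H$, then extract subsequences and pass to the limit. The only differences are cosmetic---you verify upper semicontinuity directly via $b_j \to b$, $h_j \in D(b_j)$ rather than through the characterisation in Theorem \ref{t:GGS}, and your norm-convergence step (using $\omega_{j,k} = T_{b_j}\omega_{j,k}$ together with compactness of $T_b$) is a slight variant of the paper's argument via $\langle Q\omega_j^k, b \rangle \to 1$.
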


\begin{proof}
The duality mapping $D \colon S_{X_Q} \to S_{X_Q^*}$ is point-to-compact since for every $b \in \mathbb{S}_{X_Q}$, the closed, bounded set $D(b)$ is contained in a finite-dimensional subspace of $X_Q^*$. We next intend to show that $D$ is norm-to-norm upper semicontinuous. For this, suppose $b \in \mathbb{S}_{X_Q}$ and $f_j \in \mathbb{S}_{X_Q^*}$ satisfy $\lim_{j \to \infty} \langle f_j,b \rangle_{X_Q^*-X_Q} = 1$. By Theorem \ref{t:GGS} it suffices to show that $f_j \to f \in D(b)$ for a subsequence.

Denote $\operatorname{dim}(\ker(I-T_b)) = n \in \N$. Then $\operatorname{dim}(\ker(I-T_{b_j})) \le n$ from some index on by Remark \ref{r:Higher dimensions}. Thus we may write every $f_j$ as a convex combination $f_j = \sum_{k=1}^N \lambda_j^k Q \omega_j^k$. By passing to a subsequence, for every $k \in \{1,\ldots,N\}$ we have $\lambda_j^k \to \lambda^k \in [0,1]$ and $\omega_j^k \rightharpoonup \omega^k \in \mathbb{B}_H$. In particular, $\sum_{k=1}^N \lambda^k = 1$.

Suppose now $\lambda^k > 0$. Then necessarily $\langle Q \omega_j^k, b \rangle_{X_Q^*-X_Q} \to 1$ and so $\|\omega_j^k\|_H \to 1$, giving $\|\omega_j^k-\omega^k\|_H \to 0$ and $\|Q \omega_j^k - Q \omega^k\|_{X_Q^*} \to 0$. We conclude that $f_j \to \sum_{k=1}^N \lambda^k Q \omega^k \in D(b)$.
\end{proof}

\begin{claim} \label{Claim on equivalent topologies2}
The norm and relative weak$^*$ topologies also coincide in $\textit{NA}_{\norm{\cdot}_{X_Q}}$.
\end{claim}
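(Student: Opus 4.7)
The plan is to deduce this claim directly from the cusco property of $D$ (part (v) of Theorem~\ref{t:Theorem on duality mapping}, established immediately above) together with Theorem~\ref{t:GGS}. Let $(f_n)$ be a sequence in $\textit{NA}_{\norm{\cdot}_{X_Q}}$ converging weak$^*$ to some $f \in \textit{NA}_{\norm{\cdot}_{X_Q}}$; the goal is to show $\norm{f_n - f}_{X_Q^*} \to 0$. Since $f$ is norm-attaining, there exists $b \in \mathbb{S}_{X_Q}$ with $f \in D(b)$.

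By the cusco property just cited, $D$ is norm-to-norm upper semicontinuous at $b$ and $D(b)$ is norm compact, which are precisely the hypotheses of condition (i) in Theorem~\ref{t:GGS}. The equivalence of (i) and (iii) in that theorem then yields that the weak$^*$ and norm topologies on $\mathbb{S}_{X_Q^*}$ agree at every point of $D(b)$, in particular at $f$.

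Since each $f_n$ lies on the dual unit sphere (every norm-attaining point has $X_Q^*$-norm one), the weak$^*$ convergence $f_n \overset{*}{\rightharpoonup} f$ then upgrades to the desired norm convergence $\norm{f_n - f}_{X_Q^*} \to 0$. Given the preceding cusco claim, this argument is essentially mechanical and I do not expect any genuine obstacle; the only small remark is that restricting attention to sequences is legitimate because $X_Q$ is separable. Indeed, by Assumption~\ref{Assumption on the bilinear operator}(i) the norms $\norm{\cdot}_{X_Q}$ and $\norm{\cdot}_{X}$ are equivalent, and $X$ is separable since $X^*$ is.
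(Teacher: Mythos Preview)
Your proof is correct and follows essentially the same route as the paper, which simply states that the claim follows directly from the cusco property (Claim~(v)) and Theorem~\ref{t:GGS}. You have merely unpacked that one-line justification, including the legitimate reduction to sequences via separability of $X_Q$.
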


\begin{proof}
The result follows directly from Claim (v) and Theorem \ref{t:GGS}.
\end{proof}

\section{The Jacobian equation with $L^p$ data} \label{s:The Jacobian equation with Lp data}
Many of the ideas of this paper can be adapted to study the range of the operator $J \colon \dot{W}^{1,2p}(\R^2,\R^2) \to L^p(\R^2)$ when $1 < p < \infty$. In this section we outline such an approach and list those results whose proof extends to this new setting in a straightforward manner.

We can again use basic properties of the Beurling transform to write
\[\langle b, Q\omega \rangle_{L^{p'}-L^p}= \langle T_b \omega, \omega \rangle_{L^{(2p)'}-L^{2p}}, \quad
T_b \omega \defeq \overline{(\S b-b\S) \overline{\S \omega}}, \quad Q \omega \defeq \abs{\S \omega}^2 - \abs{\omega}^2.\]
Note that $\norm{T_b}_{L^{2p} \to L^{(2p)'}} \lesssim_p \norm{b}_{L^{p'}}$ for all $b \in L^{p'}(\C)$ by a simple application of H\"older's inequality and the boundedness of $\S \colon L^p(\C,\C) \to L^p(\C,\C)$. The lower bound estimate
\begin{equation} \label{e:Lower bound estimate for Lp}
\norm{T_b}_{L^{2p} \to L^{(2p)'}} \gtrsim_p \norm{b}_{L^{p'}},
\end{equation}
however, is far from trivial and cannot be proved by simply following the proof of estimates \eqref{Two-sided estimate}.

Hyt\"onen proved \eqref{e:Lower bound estimate for Lp} rather recently in~\cite{Hyt21}. More generally, on $\R^n$, Hyt\"onen showed that the commutator of a degenerate Calder\'on-Zygmund operator with $b \in L^1_{loc}(\R^n)$ defines a bounded operator from $L^{q_1}(\R^n)$ into $L^{q_2}(\R^n)$, $q_1 > q_2 > 1$, if and only if $b = a+c$ with $a \in L^r(\R^n)$, $1/r=1/{q_2}-1/{q_1}$ and $c \in \R$. In notable contrast to the case of $[b,T] \colon L^2(\R^n) \to L^2(\R^n)$, the possible cancellations of $b$ do not play an important role; recall, for instance, that there exist $b \notin \tp{BMO}(\R^n)$ such that $\abs{b} \in \tp{BMO}(\R^n)$.

As a corollary of \eqref{e:Lower bound estimate for Lp}, Hyt\"onen obtained an analogue of \eqref{e:Jacobian decomposition}: for every $f \in L^p(\C)$ there exist $u_j \in \dot{W}^{1,2p}(\C,\C)$ such that $f = \sum_{j=1}^\infty J u_j$ and $\sum_{j=1}^\infty \norm{D u_j}_{L^{2p}}^2 \lesssim \norm{f}_{L^p}$~\cite{Hyt21}. Hyt\"onen also showed an analogous result in higher dimensions by different methods.

In order to adapt the methods of this paper to the $L^p$ case, another crucial ingredient is the compactness of $[b,T] \colon L^{2p}(\R^2,\R^2) \to L^{(2p)'}(\R^2,\R^2)$ for all $b \in L^{p'}(\R^2)$. Hyt\"onen \& al. recently proved this property for a large class of degenerate Calder\'on-Zygmund kernels in~\cite{HLTY22}. We formulate their two main results.
\begin{theorem}
Let $1 < q_2 < q_1 < \infty$ and $1/r=1/{q_2}-1/{q_1}$. Suppose $T$ is a non-degenerate Calder\'on-Zygmund operator which satisfies one of the following two:
\begin{enumerate}
\item condition (i) of Definition \ref{d:Non-degenerate kernels} with the Dini condition $\int_0^1 t^{-1} \omega(t) \tp{d} t < \infty$,

\item condition (ii) of Definition \ref{d:Non-degenerate kernels} with $\Omega \in L^\nu(S^{n-1})$ for some $\nu \in (1,\infty]$.
\end{enumerate}
Then $[b,T]$ is compact from $L^{q_1}(\R^n)$ to $L^{q_2}(\R^n)$ for all $b \in L^r(\R^n)$.
\end{theorem}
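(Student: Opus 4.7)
The plan is to follow the classical three-step scheme for commutator compactness, adapted to the off-diagonal regime studied in~\cite{HLTY22}.

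First, I would invoke the off-diagonal upper bound $\|[b,T]\|_{L^{q_1}\to L^{q_2}} \lesssim \|b\|_{L^r}$ established by Hyt\"onen in~\cite{Hyt21} (which holds under either (i) or (ii) by standard Calder\'on-Zygmund theory combined with H\"older's inequality on the commutator identity) to conclude that $b\mapsto [b,T]$ is a bounded linear map from $L^r(\R^n)$ into $\mathscr{B}(L^{q_1}(\R^n), L^{q_2}(\R^n))$. Since the compact operators form a closed subspace of $\mathscr{B}(L^{q_1},L^{q_2})$ and $C_c^\infty(\R^n)$ is dense in $L^r(\R^n)$, this reduces the compactness statement to symbols $b\in C_c^\infty(\R^n)$.

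For such $b$ with $\supp(b)\subset B_R$ and $\|b\|_\infty\le M$, set $F := \{[b,T]f : \|f\|_{L^{q_1}}\le 1\}$. By the Kolmogorov-Riesz-Fr\'echet criterion, relative compactness of $F$ in $L^{q_2}(\R^n)$ will follow from two properties: \emph{uniform tightness at infinity} and \emph{uniform equicontinuity of translations}. Tightness is immediate: if $|x|>2R$ then $b(x)=0$, so $[b,T]f(x)=-T(bf)(x)$, and the bound $|K(x,y)|\lesssim |x-y|^{-n}$ combined with the compact support of $bf$ yields $|T(bf)(x)|\lesssim |x|^{-n} M|B_R|^{1/q_1'}\|f\|_{L^{q_1}}$, whose restriction to $\{|x|>R_0\}$ has vanishing $L^{q_2}$ norm as $R_0\to\infty$ since $q_2>1$. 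For equicontinuity, the pointwise identity
\[
[b,T]f(x+h)-[b,T]f(x) = \int \bigl([b(x+h)-b(y)]K(x+h,y) - [b(x)-b(y)]K(x,y)\bigr) f(y)\,dy
\]
splits into a smoothness-of-$b$ contribution (controlled by $\|\nabla b\|_\infty|h|$ together with the off-diagonal upper bound applied to the difference symbol $b(\cdot+h)-b$) and a smoothness-of-$K$ contribution (controlled by the modulus of continuity of the kernel).

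The main technical obstacle is the kernel-smoothness contribution in case (ii) with $\nu<\infty$, because rough homogeneous kernels need not admit any pointwise modulus of continuity and the equicontinuity estimate above fails to close directly. The strategy of~\cite{HLTY22} is to approximate $\Omega$ by smooth angular kernels $\Omega_\varepsilon$ in the $L^\nu(S^{n-1})$-norm and to apply the off-diagonal upper bound of step one to $[b,T-T_\varepsilon]$, obtaining $\|[b,T]-[b,T_\varepsilon]\|_{L^{q_1}\to L^{q_2}}\to 0$ as $\varepsilon\to 0$; this reduces the compactness of $[b,T]$ to that of $[b,T_\varepsilon]$, for which the smoothed kernel $K_\varepsilon$ carries a usable Dini modulus and the equicontinuity argument above closes. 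The nontrivial quantitative input behind this step is an $L^\nu$-H\"ormander-type condition for rough singular integrals, which is the technical heart of~\cite{HLTY22}; in case (i) the assumed Dini condition on $\omega$ renders this step routine.
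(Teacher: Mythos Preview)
The paper does not contain a proof of this theorem: it is quoted from~\cite{HLTY22} (introduced as ``We formulate their two main results'') and stated without argument. There is therefore nothing in the paper to compare your proposal against.

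That said, your sketch follows the expected Uchiyama-type scheme and is broadly sound for the off-diagonal setting. The reduction to $b\in C_c^\infty$ via the operator-norm bound from~\cite{Hyt21} and density is correct, and the Kolmogorov--Riesz--Fr\'echet verification with tightness from the kernel size bound is the right shape. One point to tighten: in case~(ii) you need the \emph{operator} bound $\|[b,T-T_\varepsilon]\|_{L^{q_1}\to L^{q_2}}\to 0$, which requires a quantitative dependence of the off-diagonal commutator estimate on $\|\Omega-\Omega_\varepsilon\|_{L^\nu(S^{n-1})}$; you correctly flag this as the technical core of~\cite{HLTY22}, but your write-up should make explicit that the upper bound from step one is not merely qualitative but tracks the kernel norm. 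Also, your equicontinuity split should be handled with a bit more care: the ``smoothness-of-$b$'' piece is best treated by writing $[b(\cdot+h)-b,T]$ and invoking the operator bound with symbol $b(\cdot+h)-b\in L^r$ (which has small $L^r$ norm since $b\in C_c^\infty$), rather than via a pointwise $\|\nabla b\|_\infty|h|$ estimate alone.
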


 In the next subsection we put the operator $Q = \abs{\S\cdot}^2-\abs{\cdot}^2 \colon L^{2p}(\C,\C) \to L^p(\C)$ into a general Banach-space geometric framework.

\subsection{Mathematical setting}
We assume that $X$ is a separable, reflexive Banach space and that $Y$ is a separable, reflexive, smooth, strictly convex Banach space with the Kadec-Klee property. Assumptions \ref{Assumption on the bilinear operator}--\ref{Assumption on the bilinear operator 2}, Definition \ref{Definition of the quadratic quantity} and Proposition \ref{p:Corollary on modified operator} have natural analogues:

\begin{assumption} \label{Assumption* on the bilinear operator}
A bilinear mapping
\[(b,f) \mapsto T_b f \colon X \times Y \to Y^*\]
satisfies the following conditions:

\renewcommand{\labelenumi}{(\roman{enumi})}
\begin{enumerate}
\item $c \norm{b}_X \le \norm{T_b}_{Y \to Y^*} \le C \|b\|_X$ for every $b \in X$,

\item $T_b$ is compact for every $b \in X$.
\end{enumerate}
\end{assumption}

\begin{assumption} \label{Assumption on the bilinear operator 2*}
The following conditions hold for every $b \in X$:
\begin{enumerate}
\item $T_b^* = T_b$.

\item $\norm{T_b}_{Y \to Y^*} = \sup_{\norm{\omega}_Y = 1} \langle T_b \omega, \omega \rangle_{Y^*-Y}$.
\end{enumerate}
\end{assumption}

\begin{definition} \label{d:Definition of Q*}
Suppose Assumptions \ref{Assumption* on the bilinear operator}--\ref{Assumption on the bilinear operator 2*} hold. Define the norm-to-norm and weak-to-weak sequentially continuous map $Q \colon Y \to X^*$ by
\[\langle b, Qf \rangle_{X-X^*} \defeq \langle T_b f, f \rangle_{Y^*-Y}.\]
\end{definition}

\begin{proposition} \label{p:modified Tb in Lp case}
If $T$ satisfies Assumption \ref{Assumption* on the bilinear operator}, then $\tilde{T}_b \colon Y \times Y \to Y^* \times Y^*$, $\tilde{T}_b(\omega,\gamma) \defeq (T_b^* \gamma,T_b \omega)$, satisfies Assumptions \ref{Assumption* on the bilinear operator}--\ref{Assumption on the bilinear operator 2*}.
\end{proposition}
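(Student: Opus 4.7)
The plan is to mirror the Hilbert-space argument of Lemma \ref{l:Lemma on the self-adjoint modification} and Proposition \ref{p:Corollary on modified operator}, with two modifications dictated by the Banach setting: the Hilbert inner product is replaced by the duality pairing $\langle \cdot,\cdot\rangle_{Y^*-Y}$, and reflexivity of $Y$ is used so that $T_b^*$ can be interpreted as a map $Y \to Y^*$. I equip $Y \times Y$ with the $\ell^2$-type norm $\norm{(\omega,\gamma)}^2 \defeq \norm{\omega}_Y^2+\norm{\gamma}_Y^2$, under which $(Y \times Y)^* = Y^* \times Y^*$ isometrically with the analogous $\ell^2$ dual norm, and the duality pairing reads $\langle (f,g),(\omega,\gamma) \rangle = \langle f,\omega \rangle + \langle g,\gamma \rangle$. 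Bilinearity of $(b,(\omega,\gamma)) \mapsto \tilde T_b(\omega,\gamma)$ is immediate from bilinearity of $T$ and linearity of taking adjoints.

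For Assumption \ref{Assumption* on the bilinear operator}, reflexivity of $Y$ gives $\norm{T_b^*}_{Y \to Y^*} = \norm{T_b}_{Y \to Y^*}$, so $\norm{\tilde T_b(\omega,\gamma)}^2 = \norm{T_b^* \gamma}_{Y^*}^2 + \norm{T_b \omega}_{Y^*}^2 \le \norm{T_b}^2(\norm{\omega}^2+\norm{\gamma}^2)$; testing with $(\omega,0)$ gives the reverse inequality, yielding $\norm{\tilde T_b} = \norm{T_b}$, so the two-sided bound in Assumption \ref{Assumption* on the bilinear operator}(i) for $T$ transfers directly to $\tilde T$. Compactness of $\tilde T_b$ for $b \in X$ follows from Schauder's theorem (giving compactness of $T_b^*$) and the fact that each coordinate of $\tilde T_b$ is compact. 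For Assumption \ref{Assumption on the bilinear operator 2*}(i), I use the defining property of the adjoint under the reflexive identification $Y^{**}=Y$, namely $\langle T_b^* \gamma,\omega \rangle_{Y^*-Y} = \langle T_b \omega,\gamma \rangle_{Y^*-Y}$, to compute
\[\langle \tilde T_b(\omega,\gamma),(\varphi,\psi) \rangle = \langle T_b^*\gamma,\varphi \rangle+\langle T_b\omega,\psi \rangle = \langle T_b\varphi,\gamma \rangle+\langle T_b^*\psi,\omega \rangle = \langle (\omega,\gamma),\tilde T_b(\varphi,\psi) \rangle,\]
which gives $\tilde T_b^* = \tilde T_b$.

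The only step of real interest is Assumption \ref{Assumption on the bilinear operator 2*}(ii). Applying the same adjoint identity on the diagonal gives $\langle \tilde T_b(\omega,\gamma),(\omega,\gamma) \rangle = 2 \langle T_b\omega,\gamma \rangle$ (respectively $2\Re \langle T_b\omega,\gamma \rangle$ in the complex case). Under the constraint $\norm{\omega}^2+\norm{\gamma}^2=1$, AM-GM gives $2\abs{\langle T_b\omega,\gamma\rangle} \le 2\norm{T_b}\norm{\omega}\norm{\gamma} \le \norm{T_b}$, while the substitution $(\omega,\gamma) = (\omega'/\sqrt{2},\gamma'/\sqrt{2})$ with $\norm{\omega'}=\norm{\gamma'}=1$ nearly norming for $T_b$ saturates the bound in the limit, so the supremum equals $\norm{T_b}=\norm{\tilde T_b}$. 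The mild subtlety here is that one must use precisely the $\ell^2$-type product norm on $Y\times Y$: any other $\ell^p$-sum norm would introduce an extra multiplicative constant in the Cauchy-Schwarz/AM-GM step, incompatible with the normalisation built into Definition \ref{d:Definition of Q*}. Everything else reduces to routine Banach-space bookkeeping, and in particular does not require $Y$ itself to be smooth, strictly convex, or to have the Kadec-Klee property.
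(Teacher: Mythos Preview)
Your argument is correct and is precisely the routine verification the paper intends: the paper states Proposition \ref{p:modified Tb in Lp case} without proof, relying on the reader to mimic Lemma \ref{l:Lemma on the self-adjoint modification} and Proposition \ref{p:Corollary on modified operator} with the duality pairing in place of the inner product and the $\ell^2$-product norm on $Y\times Y$ (as specified in the paragraph following the proposition). Your observation that the $\ell^2$-sum is forced by the AM--GM step in Assumption \ref{Assumption on the bilinear operator 2*}(ii) is exactly the point behind the paper's choice of norm.
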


In Proposition \ref{p:modified Tb in Lp case} we have endowed $Y \times Y$ with the norm $\norm{(\omega,\gamma)}_{Y \times Y} \defeq (\norm{\omega}_Y^2+\norm{\gamma}_Y^2)^{1/2}$ and similarly for $Y^* \times Y^*$, so that $Y \times Y$ and $Y^* \times Y^*$ are smooth and strictly convex since $Y$ and $Y^*$ are. Denoting $D(\omega) = \{\omega^*\}$ for all $\omega \in \SSS_Y$ we have $D(\omega,\gamma) = \{(\omega^*,\gamma^*)\}$ for all $\omega,\gamma \in \SSS_Y$.

\begin{remark}
Assumption \ref{Assumption on the bilinear operator 2*} reveals an important difference to the case of $\H^1$ data. In the case of $J \colon \dot{W}^{1,2}(\C,\C) \to \H^1(\C)$, the associated linear operators $T_b \colon L^2(\C,\C) \to L^2(\C,\C)$ are self-adjoint and therefore the operator norm and numerical radius of $T_b$ coincide, $\norm{T_b}_{L^2\to L^2} = \sup_{\norm{\omega}_{L^2}=1} \abs{\langle T_b f, f \rangle}$. The standard proof (see e.g.~\cite[p. 34]{Con90}) employs the fact that as a Hilbert space, $L^2(\C,\C)$ satisfies the parallelogram law $\norm{\omega+\gamma}_{L^2}^2+\norm{\omega-\gamma}_{L^2}^2 = 2 (\norm{\omega}_{L^2}^2+\norm{\gamma}_{L^2}^2)$. The analogue $\norm{T_b}_{L^{2p} \to L^{(2p)'}} = \sup_{\norm{\omega}_{L^{2p}}=1} \abs{\langle T_b \omega,\omega \rangle_{L^{(2p)'}-L^{2p}}}$ seems to fail (even though the left and right hand sides are comparable), owing to the fact that the inequality $\norm{\omega+\gamma}_{L^{2p}}^2 + \norm{\omega+\gamma}_{L^{(2p)'}}^2 \leq 2 (\norm{\omega}_{L^{2p}}^2+\norm{\gamma}_{L^{2p}}^2)$ fails (see~\cite[Theorem 2]{Kos79}).

In order to study the planar Jacobian via our operator theoretic framework, one then needs to make the concession that $Q f \defeq \abs{\S f}^2 - \abs{f}^2$ is replaced by $\tilde{Q}(f,g) = Q'_f g = J(u_1,v_2) + J(v_1,u_2)$, where $u = u_1 + i u_2 = \mathcal{C} f$ and $v = v_1 + iv_2 = \mathcal{C} g$. It is of course equivalent to studying the range of $(u,v) \mapsto Ju + Jv \colon \dot{W}^{1,2p}(\C,\C)^2 \to L^p(\C)$.
\end{remark}
  
We define norms on $X$ and $X^*$ by $\norm{b}_{X_Q} \defeq \sup_{\norm{\omega}_X=1} \langle b, Q \omega \rangle_{X-X^*} = \norm{T_b}_{Y \to Y^*}$ and $\norm{f}_{X_Q^*} \defeq \sup_{\norm{b}_{X_Q}=1} \langle f, b \rangle_{X^*-X}$. We also denote $\A \defeq \{\omega \in \SSS_Y \colon Q \omega \in \SSS_{X_Q^*}\}$.

\section{Results}
In the current setting, the Lagrange multiplier condition
\[\left. \frac{d}{d\epsilon} (\langle b, Q(\omega + \epsilon \varphi) \rangle_{X_Q-X_Q^*} - \norm{\omega + \epsilon \varphi}_Y^2) \right|_{\epsilon = 0} = 0 \qquad \text{for every } \varphi \in Y\]
is written concisely as $T_b \omega = \omega^*$, where $\omega^*$ is the unique element of $Y^*$ such that $\langle \omega, \omega^* \rangle_{Y-Y^*} = \norm{\omega}_Y \norm{\omega^*}_{Y^*}$. (In particular, $D(\omega) = \{\omega^*\}$ for all $\omega \in \SSS_Y$.) This is seen by following the proof of Proposition \ref{p:Proposition on equivalent characterizations of the Lagrange multiplier condition}.

We list results of this paper that allow a straightforward adaptation to the current setting: Theorem \ref{t:GKL open mapping theorem}; Corollaries \ref{c:Corollary on James boundaries}, \ref{c:Norm corollary}, \ref{c:Collection of conditions} and \ref{c:Corollary on points of Frechet differentiability}; Propositions \ref{p:Corollary on modified operator}, \ref{p:Existence of minimum norm solutions}, \ref{p:Weak continuity proposition}, \ref{p:Proposition on existence of Lagrange multipliers}, \ref{p:Proposition on equivalent characterizations of the Lagrange multiplier condition} and \ref{p:Having a Lagrange multiplier}; Lemmas \ref{l:Lemma on the self-adjoint modification}, \ref{l:Isometric isomorphism} and \ref{l:Krein-Milman result}.

Theorem \ref{t:Theorem on duality mapping} (i) has at least the weaker variant that $D \colon \SSS_{X_Q} \to \SSS_{X_Q^*}$ is point-to-compact. It is natural to ask whether, again, $\tp{co}(\tp{ext}(D(b))) = D(b)$ for all $b \in \SSS_{X_Q}$. Since $\tp{ext}(\B_{X_Q^*}) \subset Q\A$, we could then write each $f \in \SSS_{X_Q^*}$ as a convex combination of elements $Q \omega$, $\omega \in \A$. By a standard Baire category argument, one would then obtain an upper bound for the number of terms in these sums~\cite{GKL20}:

\begin{proposition} \label{p:Finitary decomposition proposition}
Suppose $\tp{co}(\tp{ext}(D(b))) = D(b)$ for all $b \in \SSS_{X_Q}$. Then there exists $N \in \N$ such that every $f \in X_Q^*$ can be written as
\[f = \sum_{j=1}^N Q \omega_j, \qquad \sum_{j=1}^N \norm{\omega_j}_{Y}^2 \lesssim \norm{f}_{X_Q^*}.\]
\end{proposition}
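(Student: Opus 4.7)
The plan is to combine the hypothesis with the nonlinear open mapping principle (Theorem~\ref{t:GKL open mapping theorem}) via a Baire category argument applied to the $N$-fold sum operators
\[
\Psi_N\colon Y^N\to X_Q^*,\qquad \Psi_N(\omega_1,\ldots,\omega_N)\defeq\sum_{j=1}^N Q\omega_j,
\]
where $Y^N$ is endowed with the norm $\norm{(\omega_j)}^2\defeq\sum_j\norm{\omega_j}_Y^2$. Each $\Psi_N$ is continuous and positively $2$-homogeneous, and inherits generalised translation invariance from $Q$ via the diagonal action on $Y^N$.

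First I would translate the hypothesis into a covering statement. By the $L^p$ analogue of Lemma~\ref{l:Krein-Milman result}, every extreme point of $D(b)$ lies in $Q(\mathscr{A})$. Combined with the hypothesis $\operatorname{co}(\operatorname{ext}(D(b)))=D(b)$ and the quadratic rescaling $\lambda Q\omega=Q(\sqrt\lambda\,\omega)$, this writes each $f\in D(b)$ as a finite sum $f=\sum_{j=1}^{N(f)}Q\eta_j$ with $\sum_j\norm{\eta_j}_Y^2=\norm{f}_{X_Q^*}$. Since $X$ is reflexive, so is its equivalent renorming $X_Q$; James's theorem then forces $\textit{NA}_{\norm{\cdot}_{X_Q}}=\SSS_{X_Q^*}$, so that after rescaling
\[
X_Q^*=\bigcup_{N,M\in\N}F_{N,M},\qquad F_{N,M}\defeq\Bigl\{\sum_{j=1}^N Q\omega_j\colon\sum_{j=1}^N\norm{\omega_j}_Y^2\le M\Bigr\}.
\]
Each $F_{N,M}$ is closed in $X_Q^*$ by weak compactness of bounded sets in $Y$ together with the weak-to-weak sequential continuity of $Q$.

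Next I would apply Baire's theorem: some $F_{N_0,M_0}$ has nonempty interior in $X_Q^*$, so $\Psi_{N_0}(Y^{N_0})\supset F_{N_0,M_0}$ is non-meagre. Invoking the $L^p$-analogue of Theorem~\ref{t:GKL open mapping theorem} for $\Psi_{N_0}$ then rules out the meagre alternative and yields $\Psi_{N_0}(Y^{N_0})=X_Q^*$ with $\inf_{\Psi_{N_0}(\omega)=f}\norm{\omega}^2\lesssim\norm{f}_{X_Q^*}$. This is the proposition with $N=N_0$.

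The hard part is the invocation of Theorem~\ref{t:GKL open mapping theorem} for the map $\Psi_N$ on the $L^p$-type domain $Y^N$, rather than for $Q$ itself on the Hilbert space $H$. The argument in~\cite{GKL20} is formulated for general continuous, positively homogeneous, translation-invariant maps between Banach spaces, so the adaptation is mechanical; nonetheless, spelling it out is the only nontrivial step.
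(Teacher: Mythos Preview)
Your proposal is correct and follows precisely the route the paper sketches in the paragraph preceding the proposition: use the hypothesis together with $\operatorname{ext}(\mathbb{B}_{X_Q^*})\subset Q(\mathscr{A})$ and reflexivity to write every $f\in\mathbb{S}_{X_Q^*}$ as a finite sum $\sum Q\omega_j$ with unit total energy, then invoke Baire together with the nonlinear open mapping principle of~\cite{GKL20} applied to $\Psi_N$. The paper gives no further detail beyond that one-sentence hint, so your write-up simply fills in the argument the author had in mind.
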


\begin{appendix}

\section{A proof of \eqref{Product of Hardy spaces}} \label{s:Appendix}
While formula \eqref{Product of Hardy spaces} is well-known and classical, the author has been unable to find a proof in the literature, and therefore one is sketched in this appendix. Note that \eqref{Product of Hardy spaces} is equivalent to the following proposition.

\begin{proposition} \label{p:Classical surjectivity result}
Every $f \in \mathcal{H}^1(\R)$ can be written as
\[f = \omega \gamma - H \omega H \gamma\]
for some $\omega, \gamma \in L^2(\R)$. However, there exists $f \in \mathcal{H}^1(\R)$ that cannot be written as $f = \omega^2 - (H \omega)^2$ for any $\omega \in L^2(\R)$.
\end{proposition}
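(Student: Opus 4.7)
The plan is to reduce both assertions to classical inner-outer factorization in the analytic Hardy space $H^1(\mathbb{H})$ of the upper half-plane $\mathbb{H} \defeq \{z \in \C : \operatorname{Im} z > 0\}$. Under the standard isomorphism $u \mapsto u + iHu$, the real-variable Hardy spaces $\mathcal{H}^1(\R)$ and $\mathcal{H}^2(\R)$ correspond to non-tangential boundary values of $H^1(\mathbb{H})$ and $H^2(\mathbb{H})$, respectively. Writing $\alpha = \omega + iH\omega$ and $\beta = \gamma + iH\gamma$ for $\omega, \gamma \in L^2(\R)$, we have $\operatorname{Re}(\alpha\beta) = \omega\gamma - H\omega H\gamma$ and $\operatorname{Re}(\alpha^2) = \omega^2 - (H\omega)^2$. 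Hence Proposition \ref{p:Classical surjectivity result} is equivalent to: (a) every $F \in H^1(\mathbb{H})$ factors as $F = GK$ with $G, K \in H^2(\mathbb{H})$; and (b) there exists $F \in H^1(\mathbb{H})$ that is not a square in $H^2(\mathbb{H})$.

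For (a), I would apply the Riesz--Smirnov inner-outer factorization $F = I \cdot O$, where $I$ is inner and $O \in H^1(\mathbb{H})$ is outer with $|O| = |F|$ almost everywhere on $\R$. Since $|F|^{1/2} \in L^2(\R)$ and the logarithmic integrability condition $\int_\R (1+x^2)^{-1} \log|F(x)| \, dx > -\infty$ holds for any nonzero $H^1$ function, there exists an outer function $O_{1/2} \in H^2(\mathbb{H})$ with $|O_{1/2}| = |F|^{1/2}$ on $\R$. By uniqueness of outer functions with prescribed boundary modulus, $O_{1/2}^2 = c \, O$ for some unimodular constant $c$. Taking $G \defeq c^{-1} I O_{1/2}$ and $K \defeq O_{1/2}$ produces the desired factorization in $H^2(\mathbb{H})$, and reading off boundary values gives $f = \omega\gamma - H\omega H\gamma$ as required.

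For (b), I would exhibit the explicit function $F(z) \defeq (z-i)(z+i)^{-3}$. The Cayley factor $(z-i)(z+i)^{-1}$ is inner and bounded in $\mathbb{H}$, while $(z+i)^{-2} \in H^2(\mathbb{H})$, so $F \in H^1(\mathbb{H})$ with boundary modulus $|F(x)| = (x^2+1)^{-1}$. If $F = G^2$ for some $G \in H^2(\mathbb{H})$, then $G$ would need a zero of order exactly $1/2$ at $z = i \in \mathbb{H}$, contradicting holomorphy. The corresponding $f \defeq \operatorname{Re}(F|_\R) \in \mathcal{H}^1(\R)$ therefore lies outside the range of $\omega \mapsto \omega^2 - (H\omega)^2 \colon L^2(\R) \to \mathcal{H}^1(\R)$.

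The only slightly delicate point is the translation between real-variable and analytic Hardy spaces, based on the classical fact that $u \in \mathcal{H}^p(\R)$ if and only if $u + iHu$ is the boundary value of some $F \in H^p(\mathbb{H})$. The main expository obstacle is that invoking Riesz--Smirnov factorization introduces $H^p(\mathbb{H})$ machinery not otherwise used in the paper; I would therefore keep the write-up brief and refer the reader to a standard source such as Garnett's \emph{Bounded Analytic Functions} for the one-variable $H^p$ theory.
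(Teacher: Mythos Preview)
Your proposal is correct and follows essentially the same route as the paper: both pass to the analytic Hardy space on the upper half-plane, factor an $H^1$ function as a product of two $H^2$ functions, and exhibit an $H^1$ function with an odd-order zero to obstruct being a square. The only cosmetic differences are that the paper uses the Blaschke decomposition $U=B\Phi$ and takes an analytic square root of the zero-free factor $\Phi$ (rather than your inner--outer factorization with an outer square root), and that the paper's generic example $[(z-z_0)/(z-\bar z_0)]W(z)$ specialises to your explicit $(z-i)(z+i)^{-3}$.
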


The relevant background can be found e.g. in~\cite{Dur70} or~\cite{Mas09}. Whenever $0 < p < \infty$, we denote
\[\norm{U}_p \defeq \sup_{0 < y < \infty} \left( \int_{-\infty}^\infty \abs{U(x+iy)}^p dx \right)^\frac{1}{p}.\]
The analytic Hardy space $\mathcal{H}^p(\C_+)$ consists of analytic functions $U \colon \C_+ \to \C$ with $\norm{U}_p < \infty$.

\begin{proof}[Proof of Proposition \ref{p:Classical surjectivity result}]
Let $f \in \mathcal{H}^1(\R)$; then the Hilbert transform $H f \in L^1(\R)$. We extend $f+ i H f$ analytically into $\C_+$ by using the Poisson kernel $P_y$ and the conjugate Poisson kernel $Q_y$,
\[P_y(x) \defeq \frac{1}{\pi} \frac{y}{x^2+y^2}, \quad Q_y(x) \defeq \frac{1}{\pi} \frac{x}{x^2+y^2}.\]
We denote
\[U_1(x+iy) + i U_2(x+iy) \defeq f * P_y(x) + i \, f * Q_y(x);\]
then $U = U_1 + i U_2$ belongs to $\mathcal{H}^1(\C_+)$ and, furthermore, $\lim_{y \searrow 0} \norm{U_1(\cdot,y) - f}_{L^1} = 0$ and $\lim_{y \searrow 0} \norm{U_2(\cdot,y) - H f}_{L^1} = 0$.

We form the Blaschke product $B$ of $U$ and write $U = B \Phi$, where $\Phi \in \mathcal{H}^1(\C_+)$ is zero-free. Since $\C_+$ is simply connected, $\Phi$ has an analytic square root $\Phi^{1/2}$. Set
\[V \defeq B \Phi^{1/2} \in \mathcal{H}^2(\C_+) \quad \text{and} \quad W \defeq \Phi^{1/2} \in \mathcal{H}^2(\C_+).\]
Then there exist $\omega, \gamma \in L^2(\R)$ such that
\[V_1(\cdot + iy) + i V_2(\cdot + iy) \to \omega + i H \omega,\]
\[W_1(\cdot + iy) + i W_2(\cdot + iy) \to \gamma + i H \gamma\]
in $L^2(\R)$ as $y \searrow 0$. As a consequence,
\[\mathbf{Re} [VW] = V_1 W_1 - V_2 W_2 \to \omega \gamma - H \omega H \gamma\]
in $L^1(\R)$ as $y \searrow 0$. On the other hand,
\[\mathbf{Re} [VW] = \mathbf{Re} \, U \to f\]
in $L^1(\R)$ as $y \searrow 0$. Thus $f = w \gamma - H \omega H \gamma$.

\vspace{0.3cm}
We then find $f \in \mathcal{H}^1(\R)$ that cannot be written as $f = \omega^2 - (H \omega)^2$ for any $\omega \in L^2(\R)$. Choose $U \in \mathcal{H}^1(\C_+)$ that has at least one zero of odd order. (Given $W \in \mathcal{H}^1(\C_+)$ one may set $U(z) = [(z-z_0)/(z-\bar{z_0})] W(z)$, where $z_0 \in \C_+$ and $W(z_0) \neq 0$.) Then $U$ is not the square of any analytic function. We write $f \defeq \lim_{y \searrow 0} U_1 \in \mathcal{H}^1(\R)$ where the limit holds in $L^1(\R)$.

Seeking a contradiction, assume that $f = \omega^2 - (H \omega)^2$ for some $\omega \in L^2(\R)$. Then there exists $V = V_1 + i V_2 \in \mathcal{H}^2(\C_+)$ such that $\lim_{y \searrow 0} \norm{V_1(\cdot+iy) - \omega}_{L^2} = 0$. Now $V^2 \in \mathcal{H}^1(\C_+)$ and
\[\Re[V^2] = V_1^2 - V_2^2 \to \omega^2 - (H \omega)^2 = f\]
in $L^1(\R)$ as $y \searrow 0$. Thus $U-V^2 \in \mathcal{H}^1(\C_+)$ has vanishing boundary values at $y = 0$, and so $U = V^2$, which yields a contradiction.
\end{proof}

\end{appendix}

\bibliography{Jacobianbibliography}
\bibliographystyle{amsplain}

\end{document}